\documentclass[3p,times]{elsarticle}

\usepackage{amssymb}

\usepackage{amsthm}

\usepackage{amsmath}

\usepackage{nameref}

\usepackage{kotex}

\usepackage{enumitem}

\usepackage[dvipsnames]{xcolor}

	\newcommand{\DpD}[1]{ |#1 |^{p-2} ( #1 ) }
	
	\newcommand{\DpL}[0]{\Delta_{p,\psi}}



	\newcommand{\Ical}[0]{\mathcal{I}}
	\newcommand{\Jcal}[0]{\mathcal{J}}
	\newcommand{\Kcal}[0]{\mathcal{K}}

	\newcommand{\sI}[1]{\sum_{#1\in\Ical}}

	\newcommand{\csw}[0]{\psi(\Lnorm{x_j - x_i} )}

	\newcommand{\Rcwt}[0]{\psi_R (\Lnorm{x_j (t) - x_i (t)} )}
	\newcommand{\Rcwtzero}[0]{\psi_R (\Lnorm{x_j (0) - x_i (0)} )}


	\newcommand{\norm}[2]{\left\| #1 \right\|_{#2}}
	

\newtheorem{theorem}[subsection]{Theorem}
\newtheorem{lemma}[subsection]{Lemma}

\newtheorem{proposition}[subsection]{Proposition}

\theoremstyle{definition}

\newtheorem{remark}[subsection]{Remark}
\newtheorem{example}[subsection]{Example}

\begin{document}

\begin{frontmatter}

\title{Finite flocking time of the nonlinear Cucker--Smale model with Rayleigh friction type using the discrete $p$-Laplacian}

\journal{...}

\author[JHK]{Jong-Ho Kim}
\ead{jhkim@nims.re.kr}
\address[JHK]{National Institute for Mathematical Sciences, Daejeon-si, Republic of Korea}

\author[YJL]{Young Ju Lee}
\ead{yjlee@txstate.edu}
\address[YJL]{Department of Mathematics, Texas State University, San Marcos, TX, USA}

\author[JHP]{Jea-Hyun Park\corref{JHPark}\fnref{JHPT}}
\ead{parkjhm@kunsan.ac.kr}
\cortext[JHPark]{Corresponding Author}
\address[JHP]{Department of Mathematics, Kunsan National University, Gunsan-si, Republic of Korea}

\begin{abstract}

The study of collective behavior in multi-agent systems has attracted the attention of many researchers due to its wide range of applications. Among them, the Cucker-Smale model was developed to study the phenomenon of flocking, and various types of extended models have been actively proposed and studied in recent decades.

In this study, we address open questions of the Cucker--Smale model with norm-type Rayleigh friction:
{\bf (i)} The positivity of the communication weight,
{\bf (ii)} The convergence of the norm of the velocities of agents,
{\bf (iii)} The direction of the velocities of agents.
For problems (i) and (ii), we present the nonlinear Cucker--Smale model with norm-type Rayleigh friction, where the nonlinear Cucker--Smale model is generalized to a nonlinear model by applying a discrete $p$-Laplacian operator.
For this model, we present conditions that guarantee that the norm for velocities of agents converges to 0 or a positive value, and we also show that the regular communication weight satisfies the conditions given in this study. In particular, we present a condition for the initial configuration to obtain that the norm of agent velocities converges to only some positive value. 

By contrast, problem (iii) is not solved by the norm-type nonlinear model.
Thus, we propose a nonlinear Cucker–Smale model with a vector-type Rayleigh friction for problem (iii).
In parallel to the first model, we show that the direction of the agents’ velocities can be controlled by parameters in the nonlinear Cucker–Smale model with the vector-type Rayleigh friction.

\end{abstract}

\begin{keyword}

multi-agent system \sep Cucker–Smale model \sep discrete $p$-Laplacian \sep direction control \sep flocking

\MSC[2010] 34D05 \sep 34D06 \sep 34H05 \sep 70B05 \sep 82C22 \sep 92C17

\end{keyword}

\end{frontmatter}

\newcommand{\MDdpl}[0]{\Delta_{p,\psi}}

\newcommand{\Mdpdiff}[1]{ |#1 |^{p-2} ( #1 ) }

\newcommand{\sumI}[1]{\sum_{#1\in\Ical}}
\newcommand{\sumJ}[1]{\sum_{#1\in\Jcal}}
\newcommand{\sumK}[1]{\sum_{#1\in\Kcal}}

\newcommand{\CSweight}[0]{\psi(\Lnorm{x_j – x_i} )}
\newcommand{\CSweightT}[0]{\psi(\Lnorm{x_j(t) – x_i(t)} )}

\newcommand{\Inpro}[2]{\Big\langle #1, #2\Big\rangle}

\newcommand{\LN}[2]{\| #1 \|_{#2}}
\newcommand{\Lnorm}[1]{\| #1 \|_2}
\newcommand{\Lpnorm}[1]{\| #1 \|_p}
\newcommand{\Lqnorm}[1]{\| #1 \|_q}
\newcommand{\Lrnorm}[1]{\| #1 \|_r}

\newcommand{\UC}[0]{C_M}
\newcommand{\DC}[0]{C_m}

\newcommand{\MSys}[0]{\eqref{Main_eq(1)}-\eqref{Main_eq(2)}}
\newcommand{\MaineqR}[1]{\MDdpl #1 + a_k \varphi_q(#1 ) – b_k \varphi_r(#1 ) }

\newcommand{\pmin}[0]{\psi_{\min}}
\newcommand{\pmax}[0]{\psi_{\max}}


\label{====================================}
\section{Introduction}\label{Section: int}

\newcommand{\Cab}[0]{\left( \frac{a}{b} \right)^{\frac{1}{r-q}}}
\newcommand{\Cabk}[0]{\left( \frac{a_k}{b_k} \right)^{\frac{1}{r-q}}}

In recent years, there has been a growing interest among researchers in the development and analysis of mathematical models for multi-agent systems (MAS) in fields including biology, social sciences, physics, and engineering. The modeling of MAS as a system of mathematical differential equations has emerged as an active research area, providing a powerful tool for comprehending complex phenomena such as the flocking of birds, schooling of fish, and colonization of bacteria. This improved understanding can also facilitate the development of more efficient engineering systems, including unmanned aerial vehicles. Consequently, various models, such as the Vicsek model, Kuramoto model, minimal model, and Cucker--Smale model, have been proposed and are being studied in different fields
(see \cite{argun2021vicsek,bae2022global,kim2022clustering,kim2023analysis,li2022modeling,rodrigues2016kuramoto} and the references therein).

In this study, we are interested in the Cucker--Smale model with Rayleigh friction.
The Cucker--Smale model (C-S model) was first introduced in \cite{cucker2007emergent} to study the collective and self-driven motion of self-propelled agents.
The mathematical model is given by 
\begin{align*}
  	&\frac{d x_{i,k}}{dt}(t) = v_{i,k}(t),\\
	&\frac{d v_{i,k}}{dt}(t) = \sum_{j=1}^N \psi_R(\| x_j(t) - x_i(t) \|_2^2) (v_{j,k}(t) - v_{i,k}(t)),
	\quad
	i\in \Ical:=\{ 1,\ldots, N\}, ~k\in \Kcal:=\{ 1, \ldots,d\},
\end{align*}
where $N$ is the number of agents, $a$ is a positive coupling strength,
$x_i = (x_{i,1}, \ldots x_{i,d}) \in \mathbb{R}^d$, and $v_i = (v_{i,1}, \ldots v_{i,d}) \in \mathbb{R}^d$ are the position and velocity of the $i$th agent in phase space $\mathbb{R}^d \times \mathbb{R}^d$, respectively,
$\psi_R$ is a regular communication weight defined by $\psi_R(s) = \frac{K}{\left( 1+s \right)^{\beta}}$ for some $\beta>0$ and $K>0$, 
and $\| \cdot \|_2$ is the Euclidean norm.
For this model, Cucker and Smale presented conditions to ensure that agents' velocities converge to a common velocity and that the maximum distance between agents is uniformly bounded, i.e., the so-called ``{\it asymptotic flocking}’’ is defined by 
\begin{itemize}
	\item[(i)] The relative velocity fluctuations tend to zero as time tends to infinity (velocity alignment, or consensus):
	\begin{align*}
  		\lim_{t\to\infty} \norm{v_j (t) - v_i (t)}{2} =0, \text{ for all }i , j \in \Ical,
	\end{align*}
	\item[(ii)] The diameter of a group is uniformly bounded in time t (forming a group): 
	\begin{align*}
  		\sup_{t\geq 0} \norm{x_j (t) - x_i (t)}{2} < \infty\text{ for all }i , j \in \Ical.
	\end{align*}
\end{itemize}
This C-S model was later extended into various forms depending on the purpose of each study, such as, 
(mono-cluster) flocking \cite{cucker2007emergent,ha2017critical,ha2009simple,motsch2011new}, 
multi-cluster flocking 
\cite{cho2016emergence,cho2016emergence_unit_speed},
pattern formation
\cite{choi2019collisionless,perea2009extension}, 
collision-avoidance
\cite{ahn2012collision,bongini2014sparse,carrillo2017sharp,cucker2014conditional}, 
leadership
\cite{dalmao2011cucker,dong2016flocking,shen2008cucker}, and 
forcing terms and control
\cite{bailo2018optimal,ha2010asymptotic}.

In particular, among these models, the C-S model with Rayleigh friction \cite{ha2010asymptotic} caught our attention. 
In \cite{ha2010asymptotic}, Ha and his colleagues introduced the C-S model with Rayleigh friction:
\begin{align}
  	&\frac{d x_{i,k}}{dt}(t) = v_{i,k}(t), \label{Ha eq (1)} \\
	&\frac{d v_{i,k}}{dt}(t) = \frac{\lambda}{N} \sum_{j=1}^N \psi(\| x_j(t) - x_i(t) \|_2^2) (v_{j,k}(t) - v_{i,k}(t)) + \delta  v_{i,k} ( 1- \norm{v_i(t)}{2}^2 ),\label{Ha eq (2)}
\end{align}
for $i\in\Ical$, $k\in\Kcal$, $\lambda>0$, $\delta>0$, and $\psi:\mathbb{R} \to \mathbb{R}$ is a communication weight function satisfying
\begin{align}\label{Ha condition (1)}
	\psi(s) \geq \psi_* >0, \quad \text{for some positive constant } \psi_*
\end{align}
to investigate how Rayleigh-type friction affects the dynamics of the C-S system.
They proved that if $\lambda \psi_* > \delta$, the system \eqref{Ha eq (1)}-\eqref{Ha eq (2)} induces the flocking behavior of agents. Moreover, the norm of velocities converges to 0 or 1, that is, $\norm{v_i(t)}{2} \to 0 \text{ or } 1$ as $t\to\infty$ for all $i\in\Ical$.

In particular, we note that some open problems have been discussed in \cite{ha2010asymptotic}:
\begin{itemize}
	\item[(P1)] (Problem for the condition $\psi(s) \geq \psi_* >0$, $s\in\mathbb{R}$) 
	Since they assumed $\psi(s) \geq \psi_* >0$, $s\in\mathbb{R}$, their results cannot be applied for the algebraically decaying communication C-S weight with $\psi_*=0$, such as the regular communication weight $\psi_R$.
	Therefore, we must determine a condition weaker than condition \eqref{Ha condition (1)}.
	\item[(P2)] (Problem for the convergence of $\norm{v_i}{2}$)
	For this problem, they assumed that it is not apriori clear which initial configuration converges to 0 or 1.
	\item[(P3)] (Problem for the direction of velocity)
	From the results in \cite{ha2010asymptotic}, we only know the convergence of agents' speed (i.e., $\norm{v_i}{2}$), not their velocities.
\end{itemize}

The aim of this paper was to address these open problems using a nonlinear operator called the discrete $p$-Laplacian defined by
\begin{align*}
	\DpL v_{i,k} = \sum_{j=1}^N \psi\left( \| x_j - x_i\|_2 \right)
					\DpD{v_{j,k} - v_{i,k}},
\end{align*}
where $p>1$, and $\psi : [0,\infty) \to [0,\infty)$ is a non-increasing and differentiable function.
We applied the discrete $p$-Laplacian because of its properties. To the best of our knowledge, the discrete Laplacian (that is, $p=2$) only provides information about the asymptotic behavior of the agents owing to technical reasons. For example, the norm of $v_i$ decays exponentially; however, we do not know whether $\norm{v_i}{2}=0$ in finite time. Furthermore, if $1<p<2$, due to the discrete $p$-Laplacian, agents’ behavioral properties, such as flocking and consensus, can be revealed in finite time (see \cite{kim2020complete}).
We intend to use these features to address the open problems mentioned above.
Therefore, we consider a nonlinear C-S model with (norm-type) Rayleigh friction applying the discrete $p$-Laplacian:
\begin{align}
	&\frac{dx_{i,k}}{dt} = v_{i,k}    \label{Intro Rayleigh friction (1)}\\
	&\frac{dv_{i,k}}{dt} = \DpL v_{i,k} + a v_{i,k} \norm{v_i}{2}^{q-2}
	- b v_{i,k} \norm{v_i}{2}^{r-2}, \label{Intro Rayleigh friction (2)}
\end{align}
where $p>1$, $q>1$, $r>1$, $a>0$, $b>0$, and $x_i = (x_{i,1},\ldots,x_{i,d})$ and $v_i = (v_{i,1},\ldots,v_{i,d})$ are the position and velocity of $i$-th particle in $\mathbb{R}^d$, respectively.
Here, for $p=2$, $q=2$, $r=4$, and $a=b=\delta$, the system \eqref{Intro Rayleigh friction (1)}-\eqref{Intro Rayleigh friction (2)} implies the C-S model with Rayleigh friction \eqref{Ha eq (1)}-\eqref{Ha eq (2)}.

For the model \eqref{Intro Rayleigh friction (1)}-\eqref{Intro Rayleigh friction (2)} in this study, we first show that if $p>1$, $2\leq q<r$, 
and the initial configuration for the agents' velocities is non-negative and non-zero, then the system \eqref{Intro Rayleigh friction (1)}-\eqref{Intro Rayleigh friction (2)} forms an asymptotic flocking and the velocities satisfy $\lim_{t\to\infty} \norm{v_i(t)}{2} = \Cab$.
We present a condition to guarantee that the system \eqref{Intro Rayleigh friction (1)}-\eqref{Intro Rayleigh friction (2)} has a finite flocking time and the velocities satisfy $\lim_{t\to\infty} \norm{v_i(t)}{2} = \Cab$ or $0$ for $1<p<2\leq q <r$, $a>0$, and an arbitrary initial configuration.
The given condition is as follows:
	\begin{align}\label{Intro: Main Condition 010}
  		4 C_m \left( 1 - \frac{p}{2} \right) \int_0^\infty \min_{i,j\in\Ical} \CSweightT dt 
  		> \left( \sum_{k\in\Kcal} \left(\max_{i\in\Ical} v_{i,k}(0) - \min_{i\in\Ical} v_{m,k}(0)\right)^2\right)^{\frac{2-p}{2}}
	\end{align}
where $C_m$ is positive and defined in Section 2, and the finite flocking time indicates that there exists a positive time $T>0$ such that 
$\norm{v_j (t) - v_i (t)}{2} =0$, for all $i , j \in \Ical$, and $t\geq T$.

In particular, we note that the condition \eqref{Intro: Main Condition 010} is weaker than \eqref{Ha condition (1)} presented in \cite{ha2010asymptotic} and the regular communication weight $\psi_R$ satisfies \eqref{Intro: Main Condition 010}, which is proved in Section 5. This expression solves problem one (P1).
Moreover, the existence of the finite flocking time provides a condition for the initial (velocity) configuration to solve problem two (P2); that is, from the presented condition, $\norm{v_i(t)}{2}$ converges to only $\Cab$ as $t\to\infty$.
However, we, unfortunately, do not obtain any information about the direction of the agents' progress (i.e., velocities of agents) for the nonlinear C-S model with (norm-type) Rayleigh friction.
The reason for this is that \eqref{Intro Rayleigh friction (2)} contains the norm term $\norm{\cdot}{2}$; hence, we can only obtain information about $\norm{v_i}{2}$ and not each velocity $v_{i,k}$, $k\in\Kcal$ due to the limitations of technical methods of solving ordinary differential equations.

Therefore, for the open problem (P3), we propose the nonlinear C-S model with (vector-type) Rayleigh friction:
\begin{align}
	&\frac{dx_{i,k}}{dt} = v_{i,k}    \label{Intro Main_eq(1)}\\
	&\frac{dv_{i,k}}{dt} = \DpL v_{i,k} + a_k \varphi_q(v_{i,k}) 
	- b_k \varphi_r(v_{i,k}) \label{Intro Main_eq(2)}
\end{align}
where $a_k>0$, $b_k>0$ for $k\in\Kcal$, and the function $\varphi_\gamma$ is defined by
$\varphi_\gamma(s) := |s|^{\gamma-2}s$, $s\in\mathbb{R}$, $\gamma>1$, to control the direction of the agents' progress.
For this model, we show that for $1<p<2$, $p<q<r$, and $k\in\Kcal$, if the initial velocity configuration is non-negative and non-zero, $\lim_{t\to\infty} v_{i,k}(t) = \Cabk$ for all $i\in\Ical$ and the system \eqref{Intro Main_eq(1)}-\eqref{Intro Main_eq(2)} forms asymptotic flocking. 
We also present conditions for the initial configuration to guarantee the existence of finite flocking time, using which, we can see that $v_{i,k}$ converges to either $\Cabk$, $0$, or $-\Cabk$. 
Finally, we present a condition for the initial velocity configuration to guarantee that $v_{i,k}$ converges to only $\Cabk$.
We note that as shown in the system \eqref{Intro Main_eq(1)}-\eqref{Intro Main_eq(2)} above, the difference from systems \eqref{Intro Rayleigh friction (1)}-\eqref{Intro Rayleigh friction (2)} is that the vector type term $\varphi_\gamma$ is given instead of the norm type term. This is a very simple change. However, it will allow us to obtain microscopic information about the velocity $v_{i,k}$ of each agent using the maxima and minima of $v_{i,k}$, rather than macroscopic information such as the norm.

The remainder of this paper is organized as follows. 
Section 2 introduces the preliminary concepts and useful well-known results. 
Section 3 is devoted to a discussion of the nonlinear C-S model with a norm-type Rayleigh friction, and in Section 4, we address the nonlinear C-S model with the vector-type Rayleigh friction. 
In Section 5, we show that the regular communication weight satisfies the conditions presented in Sections 3 and 4.
Finally, we provide several numerical simulations to confirm our analytical results.

\label{====================================}
\section{Preliminaries}\label{Section: pre}

We start this section with maximum and minimum functions defined by $v_{M,k}(t):=\max_{i\in\Ical} v_{ik}(t)$ and $v_{m,k}(t):=\min_{i\in\Ical} v_{ik}(t)$ for $k\in\Kcal$ and $t\geq 0$.
Let us define two index sets: for each $t\geq0$ and $k\in\Kcal$,
\begin{align*}
	\mathcal I_{M,k}^t : = \{ i\in\Ical ~|~ v_{M,k} (t) = v_{i,k}(t) \}
	\text{ and }
	\mathcal I_{m,k}^t : = \{ i\in\Ical ~|~ v_{m,k} (t) = v_{i,k}(t) \}.
\end{align*}
Since $\DpL v_{M,k}(t) \leq 0$ and $\DpL v_{m,k}(t) \geq 0$ for all $k\in\Kcal$ and $t\geq0$, for $i\in \mathcal I_{M,k}^t$ and $j\in \mathcal I_{m,k}^t$, 
\begin{align*}
	v_{i,k}^\prime
	= &
	\DpL v_{M,k} + a v_{i,k} f(v_i) - b v_{i,k} g(v_i)\\
	\leq &
	   b v_{i,k} f(v_i) \left( \frac{a}{b}  -  \frac{g(v_i)}{f(v_i)} \right),
\end{align*}
and
\begin{align*}
	v_{j,k}^\prime 
	= &
	\DpL v_{m,k} + a v_{j,k} f(v_j) - b v_{j,k} g(v_j)\\
	\geq &
	b v_{j,k} f(v_j) \left( \frac{a}{b}  -  \frac{g(v_j)}{f(v_j)} \right),
\end{align*}
where $f$ and $g$ are specified by norm-type as in \eqref{Intro Rayleigh friction (2)} or vector-type as in \eqref{Intro Main_eq(2)} in accordance with the theme of each section.
Since we always assume that $q<r$, $\frac{g(v)}{f(v)}$ is well-defined for $v \in \mathbb{R}$ and strictly increases and decreases on $(0,\infty)$ and $(-\infty, 0)$, respectively.
Thus, if $v_{i,k}(t)$ is large enough on some time interval $I$ such that $\frac{g(v_i)}{f(v_i)}>\frac{a}{b}$, then $v_{i,k}$ strictly decreases on the interval $I$.
By contrast, if $v_{j,k}$ is small enough on some interval $I$ such that $\frac{g(v_j)}{f(v_j)}>\frac{a}{b}$, then $v_{j,k}$ strictly increases on the interval $I$.
Therefore, $v_{i,k}$ is uniformly bounded with respect to $t\geq0$ for all $i\in\Ical$, $k\in\Kcal$.
Using \eqref{Intro Rayleigh friction (2)} and \eqref{Intro Main_eq(2)}, $v_{i,k}^\prime$ is also uniformly bounded. 
Hence, $v_{i,k}$ is Lipschitz continuous, and $v_{M,k}$ and $v_{m,k}$ are also Lipschitz continuous. Therefore, $v_{M,k}$ and $v_{m,k}$ are differentiable almost everywhere.

Throughout this paper, we assume that for each $k\in\Kcal$,
there exist distinct time series $\{t_l\}_{l=0}^\infty$ and $\{s_l\}_{l=0}^\infty$ such that the indices $M$ and $m$ do not change on $(s_l, s_{l+1})$ and $(t_l, t_{l+1})$, respectively.
Then, it is clear that $v_{M,k} \in C^1((s_{l-1}, s_l))$ and $v_{m,k}\in C^1((t_{l-1}, t_l))$ where $C^1 ((a,b))$ is a class consisting of all differentiable functions whose derivative is continuous on $(a,b)$.
Moreover, since $v_{i,k}$ is continuous with respect to $t$,
\begin{align*}
	\int_a^b v_{M,k}^\prime(s) ds = v_{M,k} (b) - v_{M,k} (a),
	\quad\text{and}\quad
	\int_a^b v_{m,k}^\prime(s) ds = v_{m,k} (b) - v_{m,k} (a).
\end{align*}
We note that $v_{M,k}$ and $v_{m,k}$ may or may not be differentiable at $t=s_l$ and $t=t_l$. 
However, since there always exist $i\in \mathcal I_{M,k}^{s_l}$ and $j \in \mathcal I_{m,k}^{t_l}$ such that $v_{M,k}(s_l) = v_{i,k}(s_l)$ and $v_{m,k}(t_l) = v_{j,k}(t_l)$, we consider $v_{M,k}^\prime(s_l)<\alpha$ and $v_{m,k}^\prime(t_l)<\alpha$  for some $\alpha\in\mathbb{R}$ as $v_{i,k}^\prime(s_l) <\alpha$ and $v_{j,k}^\prime(t_l) <\alpha$ for all $i\in\mathcal I_{M,k}^t$ and $j\in\mathcal I_{m,k}^t$.
The same considerations apply to other inequalities ($>$, $\geq$ and $\leq$).
In addition, if necessary, we consider the derivatives of $v_{M,k}$ and $v_{m,k}$ at $t=s_l$ and $t=t_l$, respectively, as 
\begin{align*}
  	v_{M,k}^\prime(s_l) = \lim_{t\to s_l^+} v_{i,k}^\prime(t), \quad
  	v_{m,k}^\prime(t_l) = \lim_{t\to t_l^+} v_{j,k}^\prime(t)
\end{align*}
for some $i\in \{ i\in\Ical~|~ v_{M,k} (t) = v_{i,k}(t), ~t\in(s_l , s_{l+1} )\}$ and $j\in \{ i\in\Ical~|~ v_{m,k} (t) = v_{i,k}(t), ~t\in(t_l , t_{l+1} )\}$.

We now recall some well-known properties for the $p$-norm $\norm{\cdot}{p}$ without proofs, defined by 
$\norm{\mathbf{a}}{p}:=\sum_{i \in \Ical} a_i$ if $\mathbf a=(a_1, \ldots, a_N)$ is a vector in $\mathbb{R}^N$, 
and $\norm{\mathbf a }{\gamma}:= \left( \sumI{i} \sumK{k} a_{i,k}^\gamma \right)^{\frac{1}{\gamma}}$ if $\mathbf a = (a_{i,k})_{N \times d}$ is a $N\times d$-matrix.
These properties are useful when dealing with norms in this study.

\begin{lemma}[\cite{barrett1994finite}]\label{L-inequalities}
	For $\gamma>1$, $d\geq 1$, and $\delta\geq 0$, there exist $C_1(p,d)>0$ and $C_2(p,d)>0$ such that for all $x, y \in \mathbb{R}^d$,
	\begin{align}\label{L-ineq-eq010}
		\norm{ \norm{x}{2}^{\gamma-2} x -  \norm{y}{2}^{\gamma-2} y   }{2}
		\leq 
		C_1 \norm{x-y}{2}^{1-\delta} 
		\left( \norm{x}{2}+\norm{y}{2} \right)^{\gamma-2+\delta},
	\end{align}
	and
	\begin{align}\label{L-ineq-eq020}
		\Inpro{x-y}{\norm{x}{2}^{\gamma-2} x -  \norm{y}{2}^{\gamma-2} y  }
		\geq
		C_2 \norm{x - y}{2}^{2+\delta} 
		\left( \norm{x}{2}+\norm{y}{2} \right)^{\gamma-2-\delta},
	\end{align}
	where $\Inpro{\cdot}{\cdot}$ is the dot product, and $\norm{\cdot}{2}$ is the Euclidean norm.
\end{lemma}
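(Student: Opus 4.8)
The plan is to recognize $\Phi_\gamma(z):=\Lnorm{z}^{\gamma-2}z$ as the gradient field attached to the $p$-Laplacian nonlinearity and to reduce both estimates to a single two-sided bound on an integral taken along the segment joining $x$ and $y$. First I would dispose of the parameter $\delta$ altogether. Writing $A:=\Lnorm{x-y}$ and $B:=\Lnorm{x}+\Lnorm{y}$, the triangle inequality gives $0\le A\le B$, hence $(B/A)^{\delta}\ge 1$ and $(A/B)^{\delta}\le 1$ for every $\delta\ge 0$. Since the right-hand side of \eqref{L-ineq-eq010} equals $C_1\,A\,B^{\gamma-2}(B/A)^{\delta}$ and that of \eqref{L-ineq-eq020} equals $C_2\,A^{2}B^{\gamma-2}(A/B)^{\delta}$, it suffices to establish both inequalities in the case $\delta=0$; the general $\delta\ge 0$ versions then follow with the same constants, the degenerate case $A=0$ being trivial. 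This removes the apparent interpolation and leaves the two clean statements $\Lnorm{\Phi_\gamma(x)-\Phi_\gamma(y)}\le C_1\,A\,B^{\gamma-2}$ and $\Inpro{x-y}{\Phi_\gamma(x)-\Phi_\gamma(y)}\ge C_2\,A^{2}B^{\gamma-2}$.

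Next I would use the fundamental theorem of calculus along $z_t:=(1-t)y+tx$, $t\in[0,1]$, to write $\Phi_\gamma(x)-\Phi_\gamma(y)=\int_0^1 D\Phi_\gamma(z_t)\,(x-y)\,dt$, where $D\Phi_\gamma(z)=\Lnorm{z}^{\gamma-2}\bigl(I+(\gamma-2)\,\hat z\,\hat z^{\top}\bigr)$ with $\hat z=z/\Lnorm{z}$. This symmetric matrix has eigenvalue $\Lnorm{z}^{\gamma-2}$ on $\hat z^{\perp}$ and $(\gamma-1)\Lnorm{z}^{\gamma-2}$ along $\hat z$, so its extreme eigenvalues are $\min(1,\gamma-1)\Lnorm{z}^{\gamma-2}$ and $\max(1,\gamma-1)\Lnorm{z}^{\gamma-2}$, both positive for $\gamma>1$. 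Bounding the integrand by the operator norm for \eqref{L-ineq-eq010} and by the Rayleigh quotient for \eqref{L-ineq-eq020} reduces everything to the scalar quantity $\int_0^1\Lnorm{z_t}^{\gamma-2}\,dt$: the two $\delta=0$ claims follow once I show $c\,B^{\gamma-2}\le\int_0^1\Lnorm{z_t}^{\gamma-2}\,dt\le C\,B^{\gamma-2}$ for constants depending only on $\gamma$ (and at most $d$).

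For this two-sided bound I would exploit homogeneity and rotational invariance to normalize $B=1$ and to place $x,y$ in a fixed plane, so that $\Lnorm{z_t}^{2}$ is an explicit quadratic in $t$ with leading coefficient $A^{2}$. When $\gamma\ge 2$ the exponent is nonnegative, the upper bound is immediate from $\Lnorm{z_t}\le B$, and the lower bound follows because $\Lnorm{z_t}$ stays comparable to $B$ near the endpoint with the larger norm; the configuration to watch is the antipodal one $x=-y$, where the segment passes through the origin. When $1<\gamma<2$ the integrand is singular but has an integrable singularity (since $\gamma-2>-1$), and the same extremal analysis controls both bounds. I expect this integral estimate---particularly the lower bound feeding \eqref{L-ineq-eq020} and the regime $1<\gamma<2$, where $\Lnorm{z_t}^{\gamma-2}$ blows up where the segment nears the origin---to be the \emph{main obstacle}; the cleanest route is to reduce the quadratic-in-$t$ integral to the canonical form $\int_0^1\bigl(A^{2}(t-t_\ast)^{2}+h^{2}\bigr)^{(\gamma-2)/2}\,dt$ and compare it with $B^{\gamma-2}$ by elementary means, using $A\le B$ throughout. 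Alternatively, since the statement is quoted from \cite{barrett1994finite}, one may simply invoke that reference; the argument above records a self-contained derivation.
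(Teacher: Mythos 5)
Your proposal is correct, but it necessarily takes a different route from the paper: the paper contains no proof of this lemma at all, only the citation to \cite{barrett1994finite} (Lemma 2.2 there), whereas you supply a self-contained derivation. Your steps check out. The $\delta$-reduction is legitimate: since $A:=\norm{x-y}{2}\leq \norm{x}{2}+\norm{y}{2}=:B$, rewriting the right-hand sides of \eqref{L-ineq-eq010} and \eqref{L-ineq-eq020} as $C_1AB^{\gamma-2}(B/A)^{\delta}$ and $C_2A^2B^{\gamma-2}(A/B)^{\delta}$ shows the $\delta=0$ case implies all $\delta\geq0$ with the same constants. The Jacobian computation $D\Phi_\gamma(z)=\norm{z}{2}^{\gamma-2}\bigl(I+(\gamma-2)\,\hat z\,\hat z^{\top}\bigr)$ with eigenvalues $\norm{z}{2}^{\gamma-2}$ and $(\gamma-1)\norm{z}{2}^{\gamma-2}$ is right, and the two-sided bound on $\int_0^1\norm{z_t}{2}^{\gamma-2}\,dt$ splits exactly as you predict: for $\gamma\geq2$ the upper bound is immediate from $\norm{z_t}{2}\leq B$, and the lower bound follows since, taking $\norm{x}{2}\geq\norm{y}{2}$ so $\norm{x}{2}\geq B/2$, one has $\norm{z_t}{2}\geq \norm{x}{2}-(1-t)A\geq B/4$ for $t\in[3/4,1]$; for $1<\gamma<2$ the lower bound is trivial (negative exponent plus $\norm{z_t}{2}\leq B$), while for the upper bound your canonical form works because the endpoints force $h+A\geq B/2$, so either $h\geq B/4$ (integrand $\leq (B/4)^{\gamma-2}$) or $A\geq B/4$, in which case $\int_0^1\bigl(A|t-t_*|\bigr)^{\gamma-2}dt\leq \frac{2}{\gamma-1}A^{\gamma-2}\leq \frac{2\cdot4^{2-\gamma}}{\gamma-1}B^{\gamma-2}$. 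You correctly identified the singular regime as the only delicate point. Two items to make explicit in a full write-up: (i) for $1<\gamma<2$ the fundamental theorem of calculus along the segment requires noting that $t\mapsto\Phi_\gamma(z_t)$ is absolutely continuous even when the segment crosses the origin, which holds precisely because $\gamma-2>-1$ makes the singularity integrable, as you observe; (ii) your constants depend only on $\gamma$ (not on $d$), which is in fact slightly stronger than the stated dependence $C_1(p,d)$, $C_2(p,d)$ --- itself apparently a typo for $\gamma$ inherited from the reference. What your approach buys is a verifiable in-paper proof of a lemma the paper uses repeatedly (Theorems \ref{(Thm) Consensus for Rayleigh friction} and \ref{Thm Finite Extinction Time}); what the paper's citation buys is brevity, since your argument is essentially the standard one from the finite-element literature on the $p$-Laplacian.
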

\begin{proof}
	For a proof of this lemma, see \cite[Lemma 2.2]{barrett1994finite}.
\end{proof}

\begin{lemma}\label{norm equivalent}
	For any vector $\mathbf a_i = (a_{i1}, \ldots, a_{id}) \in \mathbb{R}^d$, $i=1,\ldots,N$, 
	if $1 < r \leq s$, then
	\begin{align*}
  		\| \mathbf a \|_s \leq \| \mathbf a \|_r \leq (Nd)^{\frac{1}{r} - \frac{1}{s}} \| \mathbf a \|_s,	
	\end{align*}
	where $\mathbf a = (a_{ik}) \in \mathbb{R}^{N\times d}$ is a $N\times d$ matrix whose elements are given by $a_{ik}$, 
	and $\norm{\mathbf a }{\gamma}:= \left( \sumI{i} \sumK{k} a_{ik}^\gamma \right)^{\frac{1}{\gamma}}$.
\end{lemma}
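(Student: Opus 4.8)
The plan is to treat the $N\times d$ matrix $\mathbf a=(a_{ik})$ as a single vector in $\mathbb R^{n}$ with $n:=Nd$ entries, so that the claim becomes precisely the classical equivalence between the $\ell^r$ and $\ell^s$ norms on a finite-dimensional space. Both inequalities are homogeneous of degree one in $\mathbf a$, so after disposing of the trivial case $\mathbf a=0$ (where every norm vanishes) I may rescale freely. I would prove the two bounds separately, and note at the outset that when $s=r$ both reduce to equalities, since then $(Nd)^{1/r-1/s}=1$.

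For the lower bound $\norm{\mathbf a}{s}\le\norm{\mathbf a}{r}$, I would normalize so that $\norm{\mathbf a}{r}=1$, i.e.\ $\sumI{i}\sumK{k}|a_{ik}|^r=1$, which forces $|a_{ik}|\le 1$ for every pair $(i,k)$. Because $s\ge r$ and $0\le|a_{ik}|\le 1$, we have $|a_{ik}|^s\le|a_{ik}|^r$ entrywise; summing over all $i\in\Ical$ and $k\in\Kcal$ gives $\sumI{i}\sumK{k}|a_{ik}|^s\le 1=\norm{\mathbf a}{r}^s$, and taking $s$-th roots yields $\norm{\mathbf a}{s}\le\norm{\mathbf a}{r}$.

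For the upper bound I would invoke H\"older's inequality. Writing $\sumI{i}\sumK{k}|a_{ik}|^r=\sumI{i}\sumK{k}|a_{ik}|^r\cdot 1$ and applying H\"older with the conjugate pair of exponents $s/r$ and $s/(s-r)$ (legitimate because $s>r$ makes $s/r>1$), I obtain $\sumI{i}\sumK{k}|a_{ik}|^r\le\big(\sumI{i}\sumK{k}|a_{ik}|^{s}\big)^{r/s}\,n^{1-r/s}$, where the factor $n^{1-r/s}=(Nd)^{1-r/s}$ arises from summing the constant $1$ over the $Nd$ index pairs. Raising both sides to the power $1/r$ gives $\norm{\mathbf a}{r}\le(Nd)^{1/r-1/s}\norm{\mathbf a}{s}$, which is the asserted bound.

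There is no genuine obstacle here: the statement is a standard finite-dimensional norm equivalence, and the only care required is the bookkeeping of the H\"older conjugate exponents $(s/r,\,s/(s-r))$, the verification that the counting constant is exactly $n^{1-r/s}$, and the handling of the degenerate endpoints $s=r$ and $\mathbf a=0$. I would also read the ambient definition of $\norm{\cdot}{\gamma}$ as using $|a_{ik}|^\gamma$ rather than $a_{ik}^\gamma$, since otherwise the quantity fails to be a norm when some entries are negative and $\gamma$ is non-integer.
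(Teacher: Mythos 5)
Your proof is correct, and it is the standard argument: the paper itself omits the proof of this lemma, simply citing Appendix~C of the reference \cite{kim2020complete}, and the argument given there is exactly the one you reconstruct (normalize $\norm{\mathbf a}{r}=1$ and use entrywise monotonicity $|a_{ik}|^s\leq|a_{ik}|^r$ for the lower bound; H\"older with conjugate exponents $s/r$ and $s/(s-r)$, the constant $1$ summed over the $Nd$ index pairs giving $(Nd)^{1-r/s}$, for the upper bound). Your handling of the degenerate endpoints $s=r$ and $\mathbf a=0$, and your observation that the paper's definition $\norm{\mathbf a}{\gamma}=\bigl(\sum_{i}\sum_{k} a_{ik}^{\gamma}\bigr)^{1/\gamma}$ should be read with $|a_{ik}|^{\gamma}$ in place of $a_{ik}^{\gamma}$ (otherwise it is not a norm for negative entries and non-integer $\gamma$), are both accurate and fix a genuine imprecision in the statement as printed.
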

\begin{proof}
	This lemma is very well-known. Therefore, we omit its proof, which can be obtained from \cite[Appendix C]{kim2020complete}.
\end{proof}

\begin{remark}\label{Rmk Norm-Equivalent}
	By Lemma \ref{norm equivalent}, it is clear that
	\begin{align}\label{norm_equiv_UC}
		\DC \LN{\mathbf a}{s}\leq \LN{\mathbf a}{r} \leq 
		\UC \LN{\mathbf a}{s}
	\end{align}
	for any $r > 1$, $s > 1$, and $\mathbf a \in \mathbb{R}^{N\times d}$.
	Here, $\UC$ and $\DC$ are defined by
	\begin{align*}
  	\UC:= \max\left\{ 1, (Nd)^{\frac{1}{r} - \frac{1}{s}} \right\}, 
  	\quad \text{ and }\quad
  	\DC:= \min\left\{ 1, (Nd)^{\frac{1}{r} - \frac{1}{s}} \right\}.
	\end{align*}
\end{remark}

\label{====================================}
\section{Norm type Rayleigh friction}\label{Section: Norm type}

In this section,
we discuss problems P1 and P2 by analyzing the nonlinear C-S model with norm-type Rayleigh friction:
\begin{align}
	&\frac{dx_{ik}}{dt} = v_{ik}    \label{Rayleigh friction (1)}\\
	&\frac{dv_{ik}}{dt} = \DpL v_{ik} + a v_{ik} \norm{v_i}{2}^{q-2}
	- b v_{ik} \norm{v_i}{2}^{r-2}, \label{Rayleigh friction (2)}
\end{align}
where $p>1$. 
In particular, we assume that $2\leq q < r$ to ensure that $\norm{\cdot}{2}^{q-2}$ and $\norm{\cdot}{2}^{r-2}$ are well-defined.
By the definitions of $v_{M,k}$ and $v_{m,k}$, \eqref{Rayleigh friction (2)} implies that for each $k\in\Kcal$, 
\begin{align*}
	v_{M,k}^\prime 
	\leq &
	b v_{M,k} \norm{v_M}{2}^{q-2} 
	\left(\frac{a}{b} - \norm{v_M}{2}^{r-q}  \right),
\end{align*}
and
\begin{align*}
	v_{m,k}^\prime 
	\geq &
	b v_{m,k} \norm{v_m}{2}^{q-2} 
	\left(\frac{a}{b} - \norm{v_m}{2}^{r-q}  \right).
\end{align*}
Thus, for a fixed $k\in\Kcal$, if $v_{M,k}(t)>\Cab$ on some time interval $I$, then $v_{M,k}$ is strictly decreasing on the interval $I$, and if $v_{m,k}(t)<-\Cab$ on some interval $I$, then $v_{m,k}(t)$ is strictly increasing on the interval $I$.
These properties indicate that for each $i\in\Ical$ and $k\in\Kcal$, $v_{i,k}$ is uniformly bounded with respect to $t$ by
\begin{align*}
	\min\left\{ - \Cab , v_{m,k}(0) \right\} 
	\leq v_{i,k} (t)
	\leq
	\max\left\{   \Cab , v_{M,k}(0) \right\}, \quad t\geq 0.
\end{align*}
Therefore, we have $|v_{i,k}(t)| \leq \max\left\{ \Cab , \max_{i\in\Ical, k \in\Kcal}|v_{i,k}(0)|\right\}$ for all $i\in\Ical$, $k\in\Kcal$, and $t\geq 0$.

We now discuss more detailed properties of $v_{i,k}$ under a non-negative (or non-positive) and non-zero initial velocity configuration.

\label{---}
\begin{lemma}\label{[Lm] Non-negativity Property of v for Rayleigh Friction}\label{Lm:abbr: Unif Bdd}
	If there exist $T\geq 0$ and $k\in\Kcal$ such that $v_{i,k}(T)$ is non-negative and non-zero \textup{(}that is, $v_{i,k}(T) \not\equiv 0$ with respect to $i$\textup{)}, then we have either
	\begin{align*}
  		v_{m,k}(t) >0, \quad t > T,
	\end{align*}
	or there exists $T'>T$ such that
	\begin{align*}
  		v_{m,k}(t) >0, ~ t\in (T, T'), \quad
  		\text{ and } \quad
  		v_{M,k}(t) = v_{m,k}(t) = 0, ~t\in[T', \infty),
	\end{align*}
\end{lemma}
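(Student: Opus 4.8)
The plan is to monitor the minimum $v_{m,k}$ and combine two one-sided comparison principles: a lower bound that prevents $v_{m,k}$ from hitting $0$ in finite time once it is strictly positive, and an upper bound for $v_{M,k}$ that makes the all-zero state absorbing. The inputs are the differential inequality already recorded in this section,
\begin{align*}
  v_{m,k}^\prime \ge b\,v_{m,k}\,\Lnorm{v_m}^{q-2}\Big(\tfrac{a}{b}-\Lnorm{v_m}^{r-q}\Big)\quad\text{a.e.},
\end{align*}
together with the uniform bound $|v_{i,k}(t)|\le M:=\max\{\Cab,\ \max_{i\in\Ical,k\in\Kcal}|v_{i,k}(0)|\}$, which yields $\Lnorm{v_m}\le\sqrt d\,M$ and the companion estimate $v_{M,k}^\prime \le b\,v_{M,k}\Lnorm{v_M}^{q-2}(\tfrac ab-\Lnorm{v_M}^{r-q})$.

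First I would check $v_{m,k}(t)\ge 0$ for $t\ge T$: wherever $v_{m,k}=0$ both Rayleigh terms vanish (each carries the factor $v_{m,k}$) while $\DpL v_{m,k}\ge 0$, so $v_{m,k}^\prime\ge 0$ there, and a first-crossing argument rules out negative values. Next, on any interval where $v_{m,k}>0$ we have $v_{m,k}\le\Lnorm{v_m}\le\sqrt d\,M$, and discarding the non-negative term $a\,v_{m,k}\Lnorm{v_m}^{q-2}$ leaves $v_{m,k}^\prime\ge -b(\sqrt d\,M)^{r-2}v_{m,k}$ (here $r-2>0$ since $r>q\ge 2$); Gronwall's inequality then gives $v_{m,k}(t)\ge v_{m,k}(t_0)\exp(-b(\sqrt d\,M)^{r-2}(t-t_0))>0$. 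Thus the minimum, once strictly positive, stays strictly positive. Since $v_{i,k}(T)\not\equiv 0$ forces $v_{M,k}(T)>0$, the strict positivity of $\psi$ (which the regular weight $\psi_R$ enjoys) makes $\DpL v_{m,k}>0$ the instant $v_{m,k}=0$, so $v_{m,k}$ is pushed strictly positive immediately after $T$; by the preceding bound it then remains positive, giving the first alternative.

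If instead the first alternative fails, then by non-negativity the minimum must touch $0$ at some first time $T'>T$, with $v_{m,k}>0$ on $(T,T')$. At $T'$ the maximum must also vanish: were $v_{M,k}(T')>0$, connectivity would again force $\DpL v_{m,k}(T')>0$ and hence $v_{m,k}^\prime(T'^+)>0$, incompatible with $v_{m,k}$ approaching $0$ from positive values. With $v_{M,k}(T')=v_{m,k}(T')=0$ in hand, I would show the origin is absorbing: for $v_{M,k}\ge 0$ the companion inequality gives $v_{M,k}^\prime\le a(\sqrt d\,M)^{q-2}v_{M,k}$, so Gronwall from $T'$ keeps $v_{M,k}\le 0$, while the lower bound of the previous paragraph keeps $v_{m,k}\ge 0$; combined with $v_{m,k}\le v_{M,k}$ this pins $v_{M,k}(t)=v_{m,k}(t)=0$ for all $t\ge T'$, which is precisely the second alternative.

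The main obstacle I expect is the analysis exactly at the zeros of $v_{m,k}$. For $1<p<2$ the map $s\mapsto|s|^{p-2}s$ is non-Lipschitz at the origin, so solutions need not be unique and the sign of $v_{m,k}^\prime$ at a crossing cannot be read off from a naive linearization, yet the whole dichotomy turns on this sign. Two points need care: justifying the first-crossing and Gronwall estimates for the merely Lipschitz, almost-everywhere-differentiable functions $v_{m,k}$ and $v_{M,k}$ across the switching times $\{t_l\}$ and $\{s_l\}$ where the extremal index changes (using the one-sided derivative conventions fixed in the previous section), and establishing that a zero of the minimum is compatible only with a simultaneous zero of the maximum, for which the strict positivity of $\DpL v_{m,k}$ under connectivity is essential. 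The positivity-preservation estimate of the second paragraph in fact indicates that the first alternative is the generic outcome, so most of the work is in verifying these limiting-case arguments rather than in the two one-sided Gronwall comparisons and the order relation $v_{m,k}\le v_{M,k}$ themselves.
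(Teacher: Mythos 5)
Your proof is correct, and its skeleton coincides with the paper's own argument: if $v_{m,k}(T)=0$ then $v_{i,k}(T)\not\equiv 0$ gives $v_{m,k}(T)<v_{M,k}(T)$, hence $v_{m,k}^\prime(T)=\DpL v_{m,k}(T)>0$ and the minimum is pushed strictly positive immediately after $T$; and at a hypothetical first zero $T'$ the sign clash $0\geq v_{m,k}^\prime(T')=\DpL v_{m,k}(T')\geq 0$ forces $v_{i,k}(T')=0$ for all $i$. Where you genuinely depart from the paper is in the two one-sided Gronwall estimates, and both additions buy something real. First, your bound $v_{m,k}^\prime\geq -b(\sqrt d\,M)^{r-2}v_{m,k}$ (valid wherever $v_{m,k}\geq 0$, with $M$ the uniform velocity bound established before the lemma) shows that once the minimum is strictly positive it decays at worst exponentially and can never reach zero in finite time. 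This proves \emph{more} than the lemma: under its hypothesis the second alternative is vacuous and the first branch always holds, a strengthening the paper does not record (it carries the extinction branch through the discussion preceding Theorem \ref{[Thm] Dychotomy Convergence}). You should state this consequence explicitly, because it makes your own third paragraph --- the case ``if the first alternative fails'' --- logically unreachable and hence redundant; as written, your second and third paragraphs sit in unacknowledged tension. Second, your sandwich $0\leq v_{m,k}(t)\leq v_{M,k}(t)\leq 0$ for $t\geq T'$, obtained from $v_{M,k}^\prime\leq a(\sqrt d\,M)^{q-2}v_{M,k}$ on $\{v_{M,k}\geq 0\}$ together with the lower bound on $v_{m,k}$, is an actual proof of the absorption claim ``$v_{i,k}(t)=0$ for all $t\geq T'$,'' which the paper merely asserts; this is not a cosmetic improvement, since for $1<p<2$ the vector field is non-Lipschitz at coincidence, forward uniqueness cannot be invoked, and your one-sided linear bounds sidestep exactly that obstruction (the one you correctly identify as the delicate point). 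Finally, note that both your proof and the paper's silently require $\psi>0$ at the relevant distances to conclude $\DpL v_{m,k}>0$ from $v_{m,k}<v_{M,k}$, whereas the standing assumption only gives $\psi\geq 0$; you flag this via the regular weight $\psi_R$, and that caveat deserves to be kept.
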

\begin{proof}
	For the given $k\in\Kcal$, we first assume $v_{m,k}(T) = 0$.
	Since $v_{i,k}$ is non-zero at $t=T$, we have $0=v_{m,k}(T) < v_{M,k}(T)$, implying that $v_{m,k}^\prime (T)=\DpL v_{m,k} (T)>0$.
	Hence, there exists $\delta>0$ such that $v_{m,k}(T+\delta)>0$. 
	Consequently, without loss of generality, we may assume that $v_{m,k}(T)>0$.
	If there exists a minimum $T'>T$ such that $v_{m,k} (T') =0$, then since $v_{i,k}\in C^1([0,\infty))$, $v_{m,k}^\prime (T') \leq 0$.
	Hence, we obtain
	\begin{align*}
  		0 \geq v_{m,k}^\prime (T') = \DpL v_{m,k}(T') \geq 0,
	\end{align*}
	implying $v_{i,k}(t) =0$ for all $i\in\Ical$ and $t\geq T'$.
	If not (i.e. $v_{m,k}(t) >0$ for all $t\geq T$), then it is trivial that $v_{i,k}(t) >0$ for all $i\in\Ical$ and $t\geq T$. 
	Thus, we have the desired result.
\end{proof}

We note that if $x_{i,k}$ and $v_{i,k}$ are solutions to \eqref{Rayleigh friction (1)}-\eqref{Rayleigh friction (2)}, then $-x_{i,k}$ and $-v_{i,k}$ are also a solutions to \eqref{Rayleigh friction (1)}-\eqref{Rayleigh friction (2)}. 
Therefore, from this fact, we obtain the next lemma.

\begin{lemma}\label{[Lm] Non-positivity Property of v for Rayleigh Friction}\label{Lm:abbr: Unif Bdd(2)}
	If there exist $T\geq 0$ and $k\in\Kcal$ such that $v_{i,k}(T)$ is non-positive and non-zero \textup{(}that is, $v_{i,k}(T) \not\equiv 0$ with respect to $i$\textup{)}, then either
	\begin{align*}
  		v_{M,k}(t) < 0, \quad t > T,
	\end{align*}
	or there exists $T'>T$ such that
	\begin{align*}
  		v_{M,k}(t) < 0, ~ t\in (T, T'), \quad
  		\text{ and } \quad
  		v_{M,k}(t) = v_{m,k}(t) = 0, ~t\in[T', \infty),
	\end{align*}
\end{lemma}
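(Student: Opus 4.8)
The plan is to obtain this lemma, with essentially no new computation, from the reflection symmetry of \eqref{Rayleigh friction (1)}-\eqref{Rayleigh friction (2)} together with Lemma \ref{[Lm] Non-negativity Property of v for Rayleigh Friction}. As observed immediately before the statement, if $(x_{i,k},v_{i,k})$ solves \eqref{Rayleigh friction (1)}-\eqref{Rayleigh friction (2)}, then so does $(-x_{i,k},-v_{i,k})$, because the right-hand side of \eqref{Rayleigh friction (2)} is odd in the velocity: the operator $\DpL$ is odd (the differences $v_{j,k}-v_{i,k}$ change sign while the weights $\psi(\Lnorm{x_j-x_i})$ are unchanged under $x\mapsto -x$), the Euclidean norms $\norm{v_i}{2}$ are even, and the remaining linear factors $v_{i,k}$ are odd, so each friction term picks up exactly one sign. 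First I would therefore set $w_{i,k}:=-v_{i,k}$ and $y_{i,k}:=-x_{i,k}$ and record that $(y_{i,k},w_{i,k})$ is again a solution.

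Next I would transfer the hypothesis and the extremal functions under this reflection. Since $v_{i,k}(T)$ is non-positive and non-zero in $i$, the reflected data $w_{i,k}(T)=-v_{i,k}(T)$ is non-negative and non-zero, so Lemma \ref{[Lm] Non-negativity Property of v for Rayleigh Friction} applies to $w$. The only bookkeeping point is that multiplying by $-1$ interchanges maxima and minima: for every $t$,
\begin{align*}
  w_{M,k}(t)=\max_{i\in\Ical} w_{i,k}(t) = -\min_{i\in\Ical} v_{i,k}(t) = -v_{m,k}(t),
  \qquad
  w_{m,k}(t)= -v_{M,k}(t).
\end{align*}
Hence $w_{m,k}(t)>0$ is equivalent to $v_{M,k}(t)<0$, and $w_{M,k}(t)=w_{m,k}(t)=0$ is equivalent to $v_{M,k}(t)=v_{m,k}(t)=0$.

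Finally I would read off the dichotomy. Lemma \ref{[Lm] Non-negativity Property of v for Rayleigh Friction} yields, for $w$, either $w_{m,k}(t)>0$ for all $t>T$, or the existence of $T'>T$ with $w_{m,k}(t)>0$ on $(T,T')$ and $w_{M,k}(t)=w_{m,k}(t)=0$ on $[T',\infty)$. Substituting the identities above converts these two alternatives into exactly the two conclusions claimed for $v$: either $v_{M,k}(t)<0$ for all $t>T$, or $v_{M,k}(t)<0$ on $(T,T')$ together with $v_{M,k}(t)=v_{m,k}(t)=0$ on $[T',\infty)$.

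Because the reflection invariance has already been established, I expect no genuine obstacle here; the only thing that requires care is the sign-flip bookkeeping, namely keeping straight that the minimum of the reflected velocities corresponds to the maximum of the original ones, so that strict inequalities reverse in the correct direction, and noting that it is precisely the evenness of $\norm{v_i}{2}$ that makes the friction terms odd. A fully self-contained alternative would be to repeat the proof of Lemma \ref{[Lm] Non-negativity Property of v for Rayleigh Friction} verbatim with all inequalities reversed, using $\DpL v_{M,k}(t)\le 0$ at an interior maximum in place of $\DpL v_{m,k}(t)\ge 0$; but the reflection route avoids duplicating that argument entirely.
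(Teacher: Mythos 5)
Your proposal is correct and is exactly the paper's route: the paper states the lemma immediately after noting that $(-x_{i,k},-v_{i,k})$ is again a solution of \eqref{Rayleigh friction (1)}-\eqref{Rayleigh friction (2)}, deriving it from Lemma \ref{[Lm] Non-negativity Property of v for Rayleigh Friction} by that reflection. Your write-up merely makes explicit the sign-flip bookkeeping ($w_{M,k}=-v_{m,k}$, $w_{m,k}=-v_{M,k}$) that the paper leaves implicit, which is a correct and welcome elaboration.
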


\begin{lemma}\label{(Lm) norm of v_M < Cab}
	Let $v_{i,k}(0)$ be non-negative and non-zero for all $i\in\Ical$ and $k\in\Kcal$.
	If $\norm{v_M(T)}{2}\leq \Cab$ for some $T>0$, then $\norm{v_M(t)}{2}\leq \Cab$ for all $t\geq T$.
\end{lemma}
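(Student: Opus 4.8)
The plan is to monitor the single scalar quantity $W(t):=\norm{v_M(t)}{2}^2=\sum_{k\in\Kcal}v_{M,k}(t)^2$ and to prove that the property $\norm{v_M(t)}{2}\le\Cab$ is forward invariant from $t=T$ on. The first ingredient I would fix is sign information: since $v_{i,k}(0)$ is non-negative and non-zero for every $k\in\Kcal$, applying the non-negativity Lemma~\ref{[Lm] Non-negativity Property of v for Rayleigh Friction} with $T=0$ gives $v_{i,k}(t)\ge 0$ for all $i\in\Ical$, $k\in\Kcal$ and $t\ge 0$, and hence $v_{M,k}(t)\ge 0$ throughout. I would flag this non-negativity as the standing hypothesis that makes the later sign manipulations legitimate, since it is exactly what lets me multiply the differential inequality for $v_{M,k}$ by the non-negative factor $v_{M,k}$ without reversing it.

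Next I would differentiate $W$. Each $v_{M,k}$ is Lipschitz (as set up in Section~\ref{Section: pre}), so $W$ is Lipschitz, hence absolutely continuous, and $W'=2\sum_{k\in\Kcal}v_{M,k}v_{M,k}'$ holds for a.e.\ $t$. Inserting the coordinatewise bound $v_{M,k}'\le b\,v_{M,k}\norm{v_M}{2}^{q-2}\big(\frac{a}{b}-\norm{v_M}{2}^{r-q}\big)$ established at the start of this section, multiplying by $2v_{M,k}\ge 0$, summing over $k$, and collapsing $\sum_{k}v_{M,k}^2=\norm{v_M}{2}^2$ yields the closed scalar differential inequality
\begin{align*}
 W'(t)\;\le\;2b\,\norm{v_M(t)}{2}^{\,q}\Big(\frac{a}{b}-\norm{v_M(t)}{2}^{\,r-q}\Big)\qquad\text{for a.e. }t.
\end{align*}
The decisive point is that, because $r-q>0$, the right-hand side vanishes exactly when $\norm{v_M}{2}=\Cab$ and is $\le 0$ whenever $\norm{v_M}{2}\ge\Cab$; in particular $W'\le 0$ a.e.\ on any time set where $\norm{v_M}{2}>\Cab$.

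I would then close the argument by a barrier (contradiction) argument, so as to respect the fact that the inequality holds only almost everywhere rather than pointwise. Suppose there were $t_2>T$ with $\norm{v_M(t_2)}{2}>\Cab$. Using continuity of $t\mapsto\norm{v_M(t)}{2}$ together with $\norm{v_M(T)}{2}\le\Cab$, set $t_1:=\sup\{t\in[T,t_2]:\norm{v_M(t)}{2}\le\Cab\}$; then $T\le t_1<t_2$, $\norm{v_M(t_1)}{2}=\Cab$, and $\norm{v_M(t)}{2}>\Cab$ on $(t_1,t_2]$. On that interval the displayed inequality forces $W'\le 0$ a.e., so absolute continuity of $W$ gives $W(t_2)=W(t_1)+\int_{t_1}^{t_2}W'(s)\,ds\le W(t_1)$, i.e.\ $\norm{v_M(t_2)}{2}\le\norm{v_M(t_1)}{2}=\Cab$, contradicting $\norm{v_M(t_2)}{2}>\Cab$.

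I expect the only genuine obstacle to be the bookkeeping forced by the non-smoothness of $v_M$: the maximizing index in each coordinate can switch in time, so $v_{M,k}$ is differentiable only almost everywhere and $W$ is merely absolutely continuous. I would dispatch this exactly as in Section~\ref{Section: pre}, working throughout with the integral form of the fundamental theorem of calculus for the Lipschitz function $W$ and reading the differential inequality at switching instants through the one-sided derivatives introduced there. Once non-negativity and the scalar inequality are in place, the remaining sign analysis and the barrier construction are routine.
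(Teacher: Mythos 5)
Your proposal follows essentially the same route as the paper's own proof: both invoke Lemma~\ref{[Lm] Non-negativity Property of v for Rayleigh Friction} to secure $v_{M,k}\ge 0$, drop the term $\sum_k v_{M,k}\DpL v_{M,k}\le 0$, reduce to the scalar inequality $\frac{1}{2}\frac{d}{dt}\norm{v_M}{2}^2\le b\norm{v_M}{2}^q\bigl(\frac{a}{b}-\norm{v_M}{2}^{r-q}\bigr)$, and conclude by contradiction from the sign of the right-hand side above the threshold $\Cab$. Your only deviation is cosmetic but slightly more careful: where the paper selects an interval on which $\frac{d}{dt}\norm{v_M}{2}>0$ pointwise, you define the barrier time $t_1$ as a supremum and integrate the a.e.\ inequality using absolute continuity of $W$, which handles the non-smoothness of $v_M$ more cleanly without changing the substance of the argument.
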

\begin{proof}
	We provide a proof by contradiction.
	Suppose that there exists $\tau_0>T$ such that $\norm{v_M(\tau_0)}{2} > \Cab$.
	Since $\norm{v_M(T)}{2}\leq \Cab$, there exists $(t_0, t_1)\subset (T, \tau_0)$ such that 
	\begin{align}\label{Lm: Pf 010}
  		\frac{d}{dt}\norm{v_M(t)}{2} >0, \quad \text{and} \quad \norm{v_M(t)}{2}>\Cab
	\end{align}
	for all $t\in (t_0, t_1)$.
	From this, we obtain
	\begin{align}\label{Lm: Pf 020}
  		0
  		<&
  		\frac{1}{2} \frac{d}{dt}\norm{v_M(t)}{2}^2 \\
  		= & 
  		\sumK{k} v_{M,k}(t) v_{M,k}^\prime (t)\nonumber \\
  		= &
  		\sumK{k} v_{M,k}(t) 
  		\left( \DpL v_{M,k}(t) + a v_{M,k}(t) \norm{v_M(t)}{2}^{q-2}
		- b v_{M,k}(t) \norm{v_M(t)}{2}^{r-2} \right), \nonumber
		\quad 
		t\in (t_0 , t_1).
	\end{align}
	By contrast, since $v_{M,k}(t) \geq 0$ for all $k\in\Kcal$ and $t\geq 0$ by Lemma \ref{Lm:abbr: Unif Bdd}, we have $\sumK{k} v_{M,k}(t) \DpL v_{M,k}(t) \leq 0$ for all $t\geq0$, implying that
	\begin{align*}
  		\sumK{k} v_{M,k}(t) 
  		\left( \DpL v_{M,k}(t) + a v_{M,k}(t) \norm{v_M(t)}{2}^{q-2}
		- b v_{M,k}(t) \norm{v_M(t)}{2}^{r-2} \right)
		\leq 
		b \norm{v_M(t)}{2}^q \left( \frac{a}{b} - \norm{v_M(t)}{2}^{r-q}  \right)
		\leq 0,
	\end{align*}
	for all $t\geq 0$.
	The last inequality follows from $\norm{v_M(t)}{2}>\Cab$ in \eqref{Lm: Pf 010} and $b>0$.
	This contradicts \eqref{Lm: Pf 020}.
\end{proof}

\begin{lemma}\label{(Lm) norm of v_m > Cab}
	Let $v_{i,k}(0)$ be non-negative and non-zero for all $i\in\Ical$ and $k\in\Kcal$.
	If $\norm{v_m(T)}{2}\geq \Cab$ for some $T>0$, then $\norm{v_m(t)}{2}\geq \Cab$ for all $t\geq T$.
\end{lemma}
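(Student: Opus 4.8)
The plan is to argue by contradiction in exact parallel with Lemma \ref{(Lm) norm of v_M < Cab}, reversing every inequality and exploiting the fact that the sign of the diffusion term flips when one passes from the coordinatewise maximum to the minimum. Assume, for a contradiction, that $\norm{v_m(\tau_0)}{2} < \Cab$ for some $\tau_0 > T$. Since $\norm{v_m(T)}{2} \geq \Cab$ and $t \mapsto \norm{v_m(t)}{2}$ is continuous, I would take the first downward crossing of the level $\Cab$ after $T$ and select a subinterval $(t_0, t_1) \subset (T, \tau_0)$ on which
\[
	0 < \norm{v_m(t)}{2} < \Cab \quad \text{and} \quad \frac{d}{dt}\norm{v_m(t)}{2} < 0 .
\]
On this interval I differentiate the squared norm and substitute \eqref{Rayleigh friction (2)} at the minimizing coordinates, writing
\[
	\frac{1}{2}\frac{d}{dt}\norm{v_m(t)}{2}^2 = \sumK{k} v_{m,k}(t)\left(\DpL v_{m,k}(t) + a v_{m,k}(t)\norm{v_m(t)}{2}^{q-2} - b v_{m,k}(t)\norm{v_m(t)}{2}^{r-2}\right).
\]

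The two sign facts are precisely the ones that flip relative to the $v_M$ case. Because $v_m$ is a coordinatewise minimum, $\DpL v_{m,k}(t) \geq 0$, and because the initial configuration is non-negative and non-zero, Lemma \ref{Lm:abbr: Unif Bdd} gives $v_{m,k}(t) \geq 0$ for every $k$ and $t$; hence each diffusive contribution satisfies $v_{m,k}(t)\DpL v_{m,k}(t) \geq 0$ and may be dropped to obtain a lower bound,
\[
	\frac{1}{2}\frac{d}{dt}\norm{v_m(t)}{2}^2 \geq a\norm{v_m(t)}{2}^q - b\norm{v_m(t)}{2}^r = b\norm{v_m(t)}{2}^q\left(\frac{a}{b} - \norm{v_m(t)}{2}^{r-q}\right).
\]
Since $\norm{v_m(t)}{2} < \Cab$ on $(t_0,t_1)$, one has $\norm{v_m(t)}{2}^{r-q} < \frac{a}{b}$, so the right-hand side is non-negative; as $\norm{v_m(t)}{2} > 0$ there, this forces $\frac{d}{dt}\norm{v_m(t)}{2} \geq 0$, contradicting the choice of $(t_0,t_1)$. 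Equivalently, one may shortcut the computation by invoking the differential inequality $v_{m,k}^\prime \geq b v_{m,k}\norm{v_m}{2}^{q-2}\left(\frac{a}{b} - \norm{v_m}{2}^{r-q}\right)$ already recorded at the start of this section, multiplying it by $v_{m,k} \geq 0$, and summing over $k$.

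The analytic content is routine once the sign of $\DpL v_{m,k}$ is used in the correct (now non-negative) direction; the only genuinely delicate step is the regularity bookkeeping behind the differentiation and the first-crossing selection. Since $v_m$ is merely Lipschitz and its minimizing index can switch at the exceptional times $\{t_l\}$, I would, as set up in Section \ref{Section: pre}, fix on each smooth subinterval an index $j$ realizing the minimum and interpret one-sided derivatives at switching times via $v_{m,k}^\prime(t_l) = \lim_{t \to t_l^+} v_{j,k}^\prime(t)$; this legitimizes both the identity for $\int v_{m,k}^\prime$ and the pointwise substitution of \eqref{Rayleigh friction (2)}. The existence of $(t_0,t_1)$ enjoying all three properties then follows from continuity together with the strict gap $\norm{v_m(T)}{2} \geq \Cab > \norm{v_m(\tau_0)}{2}$, which places a whole subinterval strictly between the two levels with the norm decreasing.
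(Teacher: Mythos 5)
Your proposal is correct and is essentially the paper's own argument: the paper proves this lemma by remarking that it follows ``by the same argument'' as Lemma \ref{(Lm) norm of v_M < Cab}, and you have carried out exactly that mirrored contradiction proof, with the two sign flips ($\DpL v_{m,k}\geq 0$ at the minimum, and $v_{m,k}\geq 0$ from Lemma \ref{Lm:abbr: Unif Bdd}) applied correctly so that the diffusive term can be dropped as a lower bound and the friction term $b\norm{v_m}{2}^q\bigl(\frac{a}{b}-\norm{v_m}{2}^{r-q}\bigr)\geq 0$ contradicts the assumed strict decrease below $\Cab$. Your regularity bookkeeping via the Section \ref{Section: pre} conventions matches the paper's treatment as well, so nothing further is needed.
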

\begin{proof}
	This lemma can be proved by the same argument as in the proof of Lemma \ref{(Lm) norm of v_M < Cab} above.
	Hence, we omit the details.
\end{proof}

Note that by Lemma \ref{(Lm) norm of v_M < Cab} and Lemma \ref{(Lm) norm of v_m > Cab}, we obtain the following trichotomy-type result:

\begin{proposition}\label{(Prop) Trichotomy for Rayleigh friction}
	Let $v_{i,k}(0)$ be non-negative and non-zero for all $i\in\Ical$ and $k\in\Kcal$.
	Then, there exists $T>0$ such that one of the following is true:
	\begin{itemize}
		\item[\textup{(i)}] $\norm{v_M(t)}{2}\leq \Cab$ for all $t\geq T$.
		\item[\textup{(ii)}] $\norm{v_m(t)}{2}\geq \Cab$ for all $t\geq T$.
		\item[\textup{(iii)}] $\norm{v_m(t)}{2} < \Cab < \norm{v_M(t)}{2}$ for all $t\geq T$.
	\end{itemize}
\end{proposition}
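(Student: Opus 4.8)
The plan is to reduce the statement to a purely logical case analysis built on the two invariance properties established in Lemma~\ref{(Lm) norm of v_M < Cab} and Lemma~\ref{(Lm) norm of v_m > Cab}. Those lemmas say precisely that the two events ``$\norm{v_M}{2}$ lies at or below the threshold $\Cab$'' and ``$\norm{v_m}{2}$ lies at or above $\Cab$'' are each \emph{sticky}: once either occurs at some time, it persists for all later times. The whole proposition is therefore just a matter of classifying the trajectory according to whether either of these sticky events ever takes place.

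Concretely, I would split into the following mutually exhaustive alternatives. First, suppose there is some $T>0$ with $\norm{v_M(T)}{2}\le \Cab$; then Lemma~\ref{(Lm) norm of v_M < Cab} immediately gives $\norm{v_M(t)}{2}\le \Cab$ for all $t\ge T$, which is alternative~(i). Second, suppose instead there is some $T>0$ with $\norm{v_m(T)}{2}\ge \Cab$; then Lemma~\ref{(Lm) norm of v_m > Cab} gives $\norm{v_m(t)}{2}\ge \Cab$ for all $t\ge T$, which is alternative~(ii). The only remaining possibility is that neither event ever occurs, i.e.\ $\norm{v_M(t)}{2}> \Cab$ and $\norm{v_m(t)}{2}< \Cab$ simultaneously for every $t$; this is exactly alternative~(iii), and one may take any $T>0$. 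Since the negations of the first two events combine precisely into the double inequality of (iii), the three cases are exhaustive and the proposition follows. As a consistency check, under the non-negative and non-zero initial data Lemma~\ref{Lm:abbr: Unif Bdd} yields $0\le v_{m,k}(t)\le v_{M,k}(t)$ coordinatewise, whence $\norm{v_m(t)}{2}\le \norm{v_M(t)}{2}$ for all $t$; this rules out any contradictory ordering of the two norms relative to $\Cab$ and confirms the geometric picture underlying the three cases.

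I do not expect a substantial obstacle here, since the genuine analytic content---the invariance of the regions $\{\norm{v_M}{2}\le \Cab\}$ and $\{\norm{v_m}{2}\ge \Cab\}$---has already been carried out in the two preceding lemmas. The only delicate point is the bookkeeping at the boundary: matching the non-strict inequalities that enter the lemma hypotheses with the strict inequalities appearing in alternative~(iii), and making sure the invocation is legitimate at the relevant time. Both are handled by continuity of $t\mapsto\norm{v_M(t)}{2}$ and $t\mapsto\norm{v_m(t)}{2}$, so that an equality $\norm{v_M(T)}{2}=\Cab$ at a positive time can still be fed into Lemma~\ref{(Lm) norm of v_M < Cab}, while a trajectory that forever stays strictly outside both thresholds is exactly what is captured by (iii).
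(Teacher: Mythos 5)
Your proposal is correct and matches the paper's proof, which simply declares the trichotomy ``trivial from Lemma~\ref{(Lm) norm of v_M < Cab} and Lemma~\ref{(Lm) norm of v_m > Cab}''; your write-up just makes explicit the same case analysis on whether either sticky event ever occurs. The overlap of cases (i) and (ii) is harmless since the proposition asserts only a disjunction, so no further care is needed beyond what you gave.
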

\begin{proof}
	It is trivial from Lemma \ref{(Lm) norm of v_M < Cab} and Lemma \ref{(Lm) norm of v_m > Cab}.
\end{proof}

For each $i\in\Ical$,
we now discuss the convergence of $\norm{v_i}{2}$ for the non-negative and non-zero initial velocity configuration and present estimates for $\norm{v_i}{2}$.

\begin{theorem}\label{[Thm] Norm Convergence to Cab}
	Let $v_{i,k}(0)$ be non-negative and non-zero for all $i\in\Ical$ and $k\in\Kcal$.
	Then, we have
	\begin{align*}
  		\lim_{t\to\infty} \norm{v_i(t)}{2} = \Cab
	\end{align*}
	for all $i\in\Ical$.
\end{theorem}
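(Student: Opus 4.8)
The plan is to sandwich each agent's speed between the norms of the componentwise maximum and minimum vectors and to drive both of these to $\Cab$ using only the one-sided differential inequalities already recorded, without any separate appeal to velocity alignment. Write $M(t):=\Lnorm{v_M(t)}$ and $\mu(t):=\Lnorm{v_m(t)}$, where $v_M=(v_{M,1},\dots,v_{M,d})$ and $v_m=(v_{m,1},\dots,v_{m,d})$. By Lemma~\ref{Lm:abbr: Unif Bdd} the hypothesis forces $v_{i,k}(t)\ge 0$ for all $i,k$ and $t$, so $0\le v_{m,k}(t)\le v_{i,k}(t)\le v_{M,k}(t)$ for each $k$; squaring and summing over $k$ (all terms nonnegative) gives the pointwise squeeze
\[
 \mu(t)\le \Lnorm{v_i(t)}\le M(t),\qquad t\ge 0,\ i\in\Ical .
\]
Hence it suffices to prove $\lim_{t\to\infty}M(t)=\lim_{t\to\infty}\mu(t)=\Cab$.

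First I would show $\limsup_{t\to\infty}M(t)\le\Cab$. Using the bound $v_{M,k}'\le b\,v_{M,k}\Lnorm{v_M}^{q-2}(a/b-\Lnorm{v_M}^{r-q})$ recorded above together with $v_{M,k}\ge 0$, and differentiating $M^2=\sumK{k}v_{M,k}^2$ (licit almost everywhere by the index-switching set-up of Section~\ref{Section: pre}), I obtain
\[
 \tfrac12\tfrac{d}{dt}M^2=\sumK{k} v_{M,k}v_{M,k}'\le a M^{q}-b M^{r}=M^{q}\big(a-bM^{r-q}\big).
\]
Thus $M$ strictly decreases whenever $M>\Cab$. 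Combined with Lemma~\ref{(Lm) norm of v_M < Cab} (once $M\le\Cab$ it stays so), only two possibilities remain: either $M(T)\le\Cab$ for some $T$, whence $M\le\Cab$ afterwards; or $M>\Cab$ for all $t$, in which case $M$ decreases monotonically to a limit $L\ge\Cab$, and if $L>\Cab$ the right-hand side above stays below a negative constant on the compact range of $M$, forcing $M^2\to-\infty$, a contradiction. Either way $\limsup M\le\Cab$.

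Symmetrically I would show $\liminf_{t\to\infty}\mu(t)\ge\Cab$. From $v_{m,k}'\ge b\,v_{m,k}\Lnorm{v_m}^{q-2}(a/b-\Lnorm{v_m}^{r-q})$ and $v_{m,k}\ge0$ I get $\tfrac12\tfrac{d}{dt}\mu^2\ge a\mu^{q}-b\mu^{r}$, so $\mu$ is nondecreasing while $0<\mu<\Cab$; with Lemma~\ref{(Lm) norm of v_m > Cab} this yields either $\mu\ge\Cab$ eventually or $\mu\uparrow L\le\Cab$, and the same rate argument excludes $L\in(0,\Cab)$. The one genuinely delicate point — and the step I expect to be the main obstacle — is ruling out the degenerate collapse $\mu\to0$: since $\mu$ is nondecreasing below $\Cab$, this can occur only if $\mu\equiv0$, i.e. $v_{m,k}\equiv0$ for every $k$. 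Here I would invoke the non-zeroness of the initial configuration to pick $k$ with $v_{M,k}(0)>0$; for the minimizing agent $j$ one then has $v_{m,k}'=\DpL v_{m,k}=\sumK{} \psi(\cdots)|v_{l,k}|^{p-2}v_{l,k}>0$, contradicting $v_{m,k}\equiv0$, so $\mu$ must leave $0$. Proposition~\ref{(Prop) Trichotomy for Rayleigh friction} then confines the subsequent dynamics, and $\liminf\mu\ge\Cab$ follows.

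Finally I would combine the two one-sided estimates with the pointwise inequality $\mu(t)\le M(t)$. Taking lower limits (liminf is monotone), $\Cab\le\liminf\mu\le\liminf M\le\limsup M\le\Cab$, so every term equals $\Cab$ and $M\to\Cab$; likewise $\limsup\mu\le\limsup M=\Cab\le\liminf\mu$ gives $\mu\to\Cab$. The squeeze from the first paragraph then yields $\lim_{t\to\infty}\Lnorm{v_i(t)}=\Cab$ for every $i\in\Ical$. The only substantive difficulty beyond routine comparison-ODE bookkeeping is the exclusion of total collapse in the $\mu$-estimate; the rest is a direct consequence of the monotonicity encoded in the preservation lemmas and the elementary $\liminf$/$\limsup$ chain.
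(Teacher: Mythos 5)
Your proposal is correct and follows essentially the same route as the paper's proof: the same dichotomy via the preservation Lemmas~\ref{(Lm) norm of v_M < Cab} and \ref{(Lm) norm of v_m > Cab}, the same monotonicity-plus-integration argument ruling out limits $\alpha\neq\Cab$, and the same exclusion of total collapse $\|v_m\|_2\to 0$ via positivity of $\DpL v_{m,k}$ (which is exactly the content of Lemma~\ref{Lm:abbr: Unif Bdd}, the route the paper takes). The only differences are cosmetic: you make explicit the componentwise squeeze $\|v_m(t)\|_2\le\|v_i(t)\|_2\le\|v_M(t)\|_2$ and the final $\liminf$/$\limsup$ chain, which the paper leaves implicit in its concluding line.
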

\begin{proof}
	{\bf Step 1.} We first prove that $\lim_{t\to\infty} \norm{v_M (t)}{2}\leq \Cab$.
	If there exists $T\geq 0$ such that $\norm{v_M(T)}{2}\leq \Cab$, then by Lemma \ref{(Lm) norm of v_M < Cab}, $\norm{v_M(t)}{2}\leq \Cab$ for all $t\geq T$.
	Thus, in this case, $\lim_{t\to\infty} \norm{v_M (t)}{2}\leq \Cab$.
	We now consider the opposite case, that is, $\norm{v_M(t)}{2}> \Cab$ for all $t\geq 0$. Since $v_{i,k}(0)$ is non-negative and non-zero for all $i\in\Ical$ and $k\in\Kcal$, it follows from Lemma \ref{Lm:abbr: Unif Bdd} that $v_{M,k}(t) \geq 0$ for all $k\in\Kcal$ and $t\geq 0$. 
	Therefore, we have
	\begin{align}\label{Prop: Pf 010}
  		\frac{1}{2} \frac{d}{dt} \norm{v_M(t)}{2}^2
  		= &
  		\sumK{k} v_{M,k}(t) 
  		\left( \DpL v_{M,k}(t) + a v_{M,k}(t) \norm{v_M(t)}{2}^{q-2}
		- b v_{M,k}(t) \norm{v_M(t)}{2}^{r-2} \right)\nonumber \\
  		\leq &
  		b \norm{v_M(t)}{2}^{q} \left( \frac{a}{b} - \norm{v_M(t)}{2}^{r-q} \right)\\
  		<& 
  		0, \nonumber
	\end{align}
	for all $t\geq0$, that is, $\norm{v_M(t)}{2}$ is strictly decreasing on $[0,\infty)$.
	Thus, there exists $\alpha\geq\Cab$ such that $\lim_{t\to\infty} \norm{v_M (t)}{2}=\alpha$. We claim $\alpha = \Cab$. To show this, suppose, contrary to our claim, that $\alpha > \Cab$.
	Then, \eqref{Prop: Pf 010} implies
	\begin{align*}
  		\frac{1}{2} \frac{d}{dt} \norm{v_M(t)}{2}^2
  		\leq
  		b \norm{v_M(t)}{2}^{q} \left( \frac{a}{b} - \norm{v_M(t)}{2}^{r-q} \right)
  		\leq
  		b \alpha^q \left( \frac{a}{b} - \alpha^{r-q} \right)<0.
	\end{align*}
	By integrating it from 0 to $t$, we deduce
	\begin{align*}
  		\norm{v_M(t)}{2}^2
  		< 
  		\norm{v_M(0)}{2}^2
  		- 2b \alpha^{q-1} \left( \alpha^{r-q} - \frac{a}{b} \right) t
  		\to -\infty \text{ as } t \to \infty,
	\end{align*}
	which contradicts the fact that $\norm{v_M(t)}{2}^2\geq 0$.
	Therefore, we have $\lim_{t\to\infty} \norm{v_M (t)}{2}\leq \Cab$.
	
	{\bf Step 2.} 
	We now show that $\lim_{t\to\infty} \norm{v_m (t)}{2}\geq \Cab$.
	The proof of this statement is similar to the above proof and is discussed briefly.
	If there exists $T\geq 0$ such that $\norm{v_m (T)}{2} \geq \Cab$, then it follows from Lemma \ref{(Lm) norm of v_m > Cab} that $\lim_{t\to\infty} \norm{v_m (t)}{2}\geq \Cab$.
	We now assume that $\norm{v_m (t)}{2} < \Cab$ for all $t\geq0$.
	Then, as above, we can show that $\norm{v_m (t) }{2}$ is non-decreasing on $[0,\infty)$.
	Therefore, $\norm{v_m }{2}$ converges to $\alpha\in[0,\Cab]$ as $t\to\infty$. 
	If $\alpha=0$, then since $\norm{v_m }{2}$ is non-decreasing on $[0,\infty)$, $\norm{v_m (t) }{2} = 0$ for all $t\geq0$. However, by Lemma \ref{Lm:abbr: Unif Bdd}, there exists some $T>0$ such that $\norm{v_m (t) }{2} > 0$ for $t\in(0,T)$, which is a contradiction. Thus, $\alpha \in \left(0,\Cab \right)$. 
	Finally, by applying an argument similar to the above, we deduce that $\alpha = \Cab$.
	
	Therefore, from Steps 1 and 2, we obtain $\Cab \leq \lim_{t\to\infty} \norm{v_m (t)}{2} \leq \lim_{t\to\infty} \norm{v_M (t)}{2}\leq \Cab$.
	Thus, we have the desired result.
\end{proof}

\label{---}
\begin{proposition}\label{[Prop] Estimate for v_M}
	Let $v_{i,k}(0)$ be non-negative and non-zero for all $i\in\Ical$ and $k\in\Kcal$.
	If there exists $T\geq0$ such that $\norm{v_M(t)}{2}> \Cab$ for all $t\geq T$, then we have the decay estimate
	\begin{align*}
  		\norm{v_M(t)}{2}^2 - \left(\frac{a}{b}\right)^{\frac{2}{r-q}} 
  		\leq
  		\left( \norm{v_M(T)}{2}^2 - \left(\frac{a}{b}\right)^{\frac{2}{r-q}} \right) 
  		e^{- b \xi_0 C^q (r-q) (t- T)}, \quad t\geq T,
	\end{align*}
	where $\xi_0:= \left(\frac{a}{b}\right)^{\frac{r-q-2}{r-q}}$ if $r-q-2\geq0$ and $\xi_0:=  \norm{v_M(T)}{2}^{r-q-2}$ if $r-q-2<0$.
\end{proposition}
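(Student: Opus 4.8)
The plan is to reduce the claim to a scalar Gr\"onwall inequality for the quantity $y(t) := \Lnorm{v_M(t)}^2$. Writing $c := \Cab^2 = (a/b)^{2/(r-q)}$ (so that $C = \Cab$ and $C^q = c^{q/2} = (a/b)^{q/(r-q)}$), the hypothesis is exactly $y(t) > c$ for all $t \geq T$. First I would recall the pointwise differential inequality obtained in the proof of Theorem \ref{[Thm] Norm Convergence to Cab} (cf.\ \eqref{Prop: Pf 010}), which under our standing assumptions ($v_{M,k}(t)\geq 0$ by Lemma \ref{Lm:abbr: Unif Bdd}, so that $\sum_{k\in\Kcal} v_{M,k}\DpL v_{M,k}\leq 0$) reads
\begin{align*}
	\tfrac{1}{2} y'(t) \leq b\,\Lnorm{v_M(t)}^{q}\left( \tfrac{a}{b} - \Lnorm{v_M(t)}^{r-q} \right) = b\, y^{q/2}\left( c^{(r-q)/2} - y^{(r-q)/2} \right).
\end{align*}
This holds for a.e.\ $t \geq T$ since each $v_{M,k}$ is Lipschitz, hence $y$ is Lipschitz, differentiable a.e., and satisfies the fundamental theorem of calculus as in Section \ref{Section: pre}. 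Because $y > c$ and $r-q>0$ we have $y^{(r-q)/2} > c^{(r-q)/2} = a/b$, so the bracket is strictly negative; this is the mechanism driving the exponential decay.

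The key step is to linearize the factor $y^{(r-q)/2} - c^{(r-q)/2}$ in $y - c$. For each fixed $t$ I would apply the mean value theorem to $s \mapsto s^{(r-q)/2}$ on $[c, y(t)]$, producing some $\eta = \eta(t) \in (c, y(t))$ with
\begin{align*}
	y^{(r-q)/2} - c^{(r-q)/2} = \frac{r-q}{2}\, \eta^{(r-q-2)/2} (y - c).
\end{align*}
Together with the elementary bound $y^{q/2} \geq c^{q/2} = \Cab^{q} = C^q$ (valid since $y > c$ and $q>0$), the inequality becomes
\begin{align*}
	y'(t) \leq - b\, C^q (r-q)\, \eta^{(r-q-2)/2}\, \bigl(y(t) - c\bigr).
\end{align*}
It then remains only to bound $\eta^{(r-q-2)/2}$ below by the stated $\xi_0$, uniformly in $t$.

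This uniform lower bound is where the definition of $\xi_0$ splits into two cases, and it is the main obstacle since the monotonicity of $s\mapsto s^{(r-q-2)/2}$ reverses with the sign of $r-q-2$. If $r-q-2 \geq 0$ the map is nondecreasing, so $\eta > c$ gives $\eta^{(r-q-2)/2} \geq c^{(r-q-2)/2} = (a/b)^{(r-q-2)/(r-q)} = \xi_0$. If $r-q-2 < 0$ the map is decreasing, so I need an upper bound on $\eta$; here I would invoke that the computation above forces $y' < 0$ whenever $y>c$, so $y$ is strictly decreasing on $[T,\infty)$, whence $\eta < y(t) \leq y(T) = \Lnorm{v_M(T)}^2$ and therefore $\eta^{(r-q-2)/2} > \Lnorm{v_M(T)}^{r-q-2} = \xi_0$. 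In both cases $\eta^{(r-q-2)/2} \geq \xi_0$, yielding
\begin{align*}
	y'(t) \leq - b\, \xi_0\, C^q (r-q)\, \bigl(y(t) - c\bigr), \qquad \text{for a.e.\ } t \geq T.
\end{align*}
Finally, with $z(t) := y(t) - c > 0$, integrating $z'/z \leq - b\xi_0 C^q(r-q)$ from $T$ to $t$ (the Gr\"onwall step, licensed by absolute continuity of $z$) gives $z(t) \leq z(T)\, e^{- b\xi_0 C^q (r-q)(t-T)}$, which is precisely the asserted decay estimate.
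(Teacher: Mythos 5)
Your proof is correct and takes essentially the same route as the paper's: the same differential inequality for $\norm{v_M(t)}{2}^2 - C^2$ (using $v_{M,k}\geq 0$ from Lemma \ref{Lm:abbr: Unif Bdd} so that $\sum_{k} v_{M,k}\DpL v_{M,k}\leq 0$), the same mean value theorem linearization of $s\mapsto s^{(r-q)/2}$ between $C^2$ and $\norm{v_M(t)}{2}^2$ with the identical two-case bound defining $\xi_0$, and the same Gr\"onwall integration. Incidentally, in the case $r-q-2<0$ your bound $\eta^{(r-q-2)/2}\geq \norm{v_M(T)}{2}^{r-q-2}$ has the correct inequality direction, whereas the paper's displayed ``$\leq$'' at the corresponding step is a typo.
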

\begin{proof}
	We denote $\Cab$ briefly by $C$. 
	Since $v_{m,k} \DpL v_{m,k} \leq 0$ for all $k\in\Kcal$, we have
	\begin{align*}
  		\frac{1}{2} \frac{d}{dt} \left( \norm{v_M}{2}^2 - C^2\right)
  		= & 
  		\Inpro{v_M}{\DpL v_M + a v_M \norm{v_M}{2}^{q-2} - b v_M \norm{v_M}{2}^{r-2}}\\
  		\leq &
  		\Inpro{v_M}{a v_M \norm{v_M}{2}^{q-2} - b v_M \norm{v_M}{2}^{r-2}}\\
  		= & 
  		 - b \norm{v_M}{2}^q \left(\norm{v_M}{2}^{r-q} - C^{r-q} \right).
	\end{align*}
	By the mean value theorem, for each $t\geq T$, there exists $\xi(t)\in (C^2, \norm{v_M(t)}{2}^2)$ such that 
	\begin{align*}
  		\frac{\left( \norm{v_M}{2}^2 \right)^{\frac{r-q}{2}} - \left(C^2\right)^{\frac{r-q}{2}}}{\norm{v_M}{2}^2 - C^2} 
  		=
  		\left( \frac{r-q}{2} \right) \xi^{\frac{r-q-2}{2}}(t).
	\end{align*}
	Since $\norm{v_M}{2}$ is strictly decreasing on $[T,\infty)$, 
	$\norm{v_M(t)}{2}\leq \norm{v_M(T)}{2}$ for all $t\geq T$.
	If $r-q-2\geq0$, then $\xi^{\frac{r-q-2}{2}}(t)\geq C^{r-q-2}$, and
	if $r-q-2<0$, then $\xi^{\frac{r-q-2}{2}}(t)\leq \norm{v_M(T)}{2}^{r-q-2}$; therefore,
	we have
	\begin{align*}
  		\frac{1}{2} \frac{d}{dt} \left( \norm{v_M(t)}{2}^2 - C^2\right)
  		\leq &
  		 - b \xi_0 \left( \frac{r-q}{2} \right)  \norm{v_M(t)}{2}^q 
  		 \left(\norm{v_M(t)}{2}^2 - C^2 \right)\\
  		 \leq &
  		 - b \xi_0 C^q \left( \frac{r-q}{2} \right)  
  		 \left(\norm{v_M(t)}{2}^2 - C^2 \right),
	\end{align*}
	for $t\geq T$. Separation and integration on both sides give
	\begin{align*}
  		\norm{v_M(t)}{2}^2 - C^2 
  		\leq & 
  		\left( \norm{v_M(T)}{2}^2 - C^2 \right) 
  		e^{- b \xi_0 C^q \left( r-q  \right) (t- T)}, \quad t\geq T.
	\end{align*}
\end{proof}

\label{---}
\begin{proposition}\label{[Prop] Estimate for v_m}
	Let $v_{i,k}(0)$ be non-negative and non-zero for all $i\in\Ical$ and $k\in\Kcal$.
	If there exists $T\geq0$ such that $\norm{v_m(t)}{2} < \Cab$ for all $t\geq T$, the following decay estimate is obtained:
	\begin{align*}
  		\left(\frac{a}{b}\right)^{\frac{2}{r-q}}  - \norm{v_m(t)}{2}^2 
  		\leq
  		\left( \left(\frac{a}{b}\right)^{\frac{2}{r-q}} - \norm{v_M(T)}{2}^2 \right) 
  		e^{- b \xi_0 \norm{v_m(T)}{2}^q \left( r-q  \right) (t- T)}, \quad t\geq T,
	\end{align*}
	where $\xi_0:= \norm{v_m(T)}{2}^{r-q-2} $ if $r-q-2\geq0$ and $\xi_0:=  \left(\frac{a}{b}\right)^{\frac{r-q-2}{r-q}}$ if $r-q-2<0$.
\end{proposition}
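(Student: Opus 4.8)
The plan is to mirror the proof of Proposition~\ref{[Prop] Estimate for v_M}, now tracking $C^2 - \norm{v_m(t)}{2}^2$ in place of $\norm{v_M(t)}{2}^2 - C^2$, where I abbreviate $C := \Cab$ (so that $\frac{a}{b} = C^{r-q}$). Every inequality that pointed one way for the maximum will point the other way for the minimum. The starting point is the sign of the Laplacian term: since $v_{i,k}(0)$ is non-negative and non-zero, Lemma~\ref{Lm:abbr: Unif Bdd} gives $v_{m,k}(t)\geq 0$, and the minimum property gives $\DpL v_{m,k}\geq 0$, so $\sumK{k} v_{m,k}\DpL v_{m,k}\geq 0$. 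Plugging \eqref{Rayleigh friction (2)} into $\frac{1}{2}\frac{d}{dt}\norm{v_m}{2}^2 = \sumK{k} v_{m,k} v_{m,k}'$ and discarding the now-favourable Laplacian term, I first obtain
\begin{align*}
	\frac{1}{2}\frac{d}{dt}\norm{v_m(t)}{2}^2
	&= \sumK{k} v_{m,k}\DpL v_{m,k} + a\norm{v_m}{2}^q - b\norm{v_m}{2}^r \\
	&\geq b\norm{v_m}{2}^q\left( C^{r-q} - \norm{v_m}{2}^{r-q} \right),
\end{align*}
which, upon negation, is an upper differential inequality for $C^2 - \norm{v_m}{2}^2$.

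Next I would linearize the factor $C^{r-q} - \norm{v_m}{2}^{r-q}$ exactly as in Proposition~\ref{[Prop] Estimate for v_M}: applying the mean value theorem to $s\mapsto s^{\frac{r-q}{2}}$ on the interval $\left(\norm{v_m(t)}{2}^2,\, C^2\right)$ yields some $\xi(t)$ in that interval with $C^{r-q} - \norm{v_m}{2}^{r-q} = \frac{r-q}{2}\,\xi(t)^{\frac{r-q-2}{2}}\left(C^2 - \norm{v_m}{2}^2\right)$. The only structural change from the $v_M$ case is that the endpoints are swapped, because here $\norm{v_m}{2} < C$.

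The third step fixes the two scalar prefactors and integrates. Under the hypothesis $\norm{v_m(t)}{2} < C$ for $t\geq T$, the same computation as in Step~2 of Theorem~\ref{[Thm] Norm Convergence to Cab} shows that $\norm{v_m(t)}{2}$ is non-decreasing on $[T,\infty)$; hence $\norm{v_m(t)}{2}\geq\norm{v_m(T)}{2}$, so $\norm{v_m(t)}{2}^q\geq\norm{v_m(T)}{2}^q$ and $\xi(t) > \norm{v_m(T)}{2}^2$. This also pins down $\xi_0$ by cases: when $r-q-2\geq 0$ the map $\xi\mapsto\xi^{\frac{r-q-2}{2}}$ is non-decreasing, so $\xi^{\frac{r-q-2}{2}}\geq \norm{v_m(T)}{2}^{r-q-2}$; when $r-q-2<0$ it is non-increasing, so using $\xi < C^2$ one gets $\xi^{\frac{r-q-2}{2}} > C^{r-q-2} = \left(\frac{a}{b}\right)^{\frac{r-q-2}{r-q}}$. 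These are precisely the two branches of $\xi_0$ in the statement. Collecting the bounds gives
\begin{align*}
	\frac{1}{2}\frac{d}{dt}\left( C^2 - \norm{v_m(t)}{2}^2\right)
	\leq -\frac{b\,\xi_0\,\norm{v_m(T)}{2}^q(r-q)}{2}\left( C^2 - \norm{v_m(t)}{2}^2\right),
\end{align*}
and separation of variables followed by integration over $[T,t]$ produces the claimed exponential decay. (The factor $\norm{v_M(T)}{2}^2$ on the right-hand side of the displayed estimate should read $\norm{v_m(T)}{2}^2$, since the quantity propagated from time $T$ is $C^2 - \norm{v_m(T)}{2}^2$.)

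I expect the main difficulty to be purely bookkeeping: correctly carrying the two sign reversals relative to Proposition~\ref{[Prop] Estimate for v_M} --- the Laplacian term now aids the estimate, and $\norm{v_m}{2}$ increases rather than decreases --- and selecting the right one-sided bound on $\xi^{\frac{r-q-2}{2}}$ in each regime of $r-q-2$. The only genuine technicality, already dispatched by the conventions of Section~\ref{Section: pre}, is that $v_m$ is merely Lipschitz: the differential inequalities are valid on each interval $(t_l,t_{l+1})$ on which the minimizing index is constant, and they integrate across the finitely many junction times by continuity of $\norm{v_m}{2}^2$.
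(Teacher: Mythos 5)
Your proposal is correct and takes essentially the same route as the paper, whose own proof of Proposition~\ref{[Prop] Estimate for v_m} simply defers to the argument of Proposition~\ref{[Prop] Estimate for v_M}; your mirrored version (sign-reversed Laplacian term, non-decreasing $\norm{v_m}{2}$, swapped mean-value-theorem endpoints, and the matching case analysis for $\xi_0$) is exactly the intended argument. You are also right that the factor $\norm{v_M(T)}{2}^2$ in the stated estimate is a typo for $\norm{v_m(T)}{2}^2$, since otherwise the right-hand side could be negative while the left-hand side is positive under the hypothesis $\norm{v_m(t)}{2} < \Cab$.
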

\begin{proof}
	This proposition is proved by the same argument as the proof of Proposition \ref{[Prop] Estimate for v_M}. Thus, details are left to the reader.
\end{proof}

From this point, we consider an arbitrary initial velocity configuration and discuss the finite flocking time.
In particular, we assume that $p<2$, that is, $1<p<2$ and $2\leq q <r$. 
We assume $1<p<2$ because the discrete $p$-Laplacian induces consensus in finite time when $1<p<2$. For more details, see \cite{kim2020complete}.
We note that if there exists a finite flocking time, then since all agents have the same velocity after some time, it is trivial that the system \eqref{Rayleigh friction (1)}-\eqref{Rayleigh friction (2)} forms the flocking.
Therefore, we only deal with the finite flocking time.

\begin{theorem}\label{(Thm) Consensus for Rayleigh friction}
	For an arbitrary initial configuration and sufficiently small $a$, if $\psi$ satisfies 
	\begin{align}\label{Thm: Eq 010}
  		4 C_m \left( 1 - \frac{p}{2} \right) \int_0^\infty 
  		\min_{i,j\in\Ical} \CSweightT dt 
  		> \norm{v_M (0) - v_m (0)}{2}^{2-p},
	\end{align}
	then the system \eqref{Rayleigh friction (1)}-\eqref{Rayleigh friction (2)} has a finite flocking time.
\end{theorem}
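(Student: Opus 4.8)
The plan is to measure the velocity spread by the functional $\Phi(t):=\norm{v_M(t)-v_m(t)}{2}$, whose coordinate entries are the diameters $D_k(t):=v_{M,k}(t)-v_{m,k}(t)\ge 0$, and to show that hypothesis \eqref{Thm: Eq 010} forces $\Phi$ to vanish in finite time. This is exactly what is wanted, since $\Phi(t)=0$ is equivalent to $v_{i,k}(t)=v_{j,k}(t)$ for all $i,j\in\Ical$ and $k\in\Kcal$, i.e.\ to $\norm{v_j(t)-v_i(t)}{2}=0$. I would work on the intervals $(s_{l-1},s_l)$ and $(t_{l-1},t_l)$ on which the indices $M$ and $m$ are frozen, where $v_{M,k},v_{m,k}\in C^1$ by the discussion in Section \ref{Section: pre}, and patch across the breakpoints using the absolute continuity recorded there, so that $\tfrac12\tfrac{d}{dt}\Phi^2=\sumK{k}D_k\big(v_{M,k}^\prime-v_{m,k}^\prime\big)$ holds almost everywhere.

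First I would extract the dissipation from the discrete $p$-Laplacian. Fixing $i\in\mathcal I_{M,k}^t$ and $j_m\in\mathcal I_{m,k}^t$, every summand of $\DpL v_{M,k}$ is nonpositive, and keeping only the $j_m$-term gives $\DpL v_{M,k}\le -\big(\min_{i,j\in\Ical}\cswt\big)D_k^{p-1}$; symmetrically $\DpL v_{m,k}\ge \big(\min_{i,j\in\Ical}\cswt\big)D_k^{p-1}$. Writing $\phi(t):=\min_{i,j\in\Ical}\cswt$, the $p$-Laplacian contribution to $\tfrac12\tfrac{d}{dt}\Phi^2$ is therefore at most $-2\phi(t)\sumK{k}D_k^{p}$. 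Since $1<p<2$, the $\ell^p$–$\ell^2$ inequality yields $\sumK{k}D_k^{p}\ge\big(\sumK{k}D_k^2\big)^{p/2}=\Phi^{p}$, so the $p$-Laplacian alone contributes at most $-2\phi(t)\Phi^{p}$; rewriting the target coefficient as $2\DC$ with $\DC\le 1$ (cf.\ Remark \ref{Rmk Norm-Equivalent}) leaves a margin of $2(1-\DC)\phi(t)\Phi^{p}$ into which the friction terms must be absorbed. The exponent $p/2<1$ is exactly what will later produce finite-time extinction.

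Next I would control the Rayleigh-friction contribution
$\sumK{k}D_k\big[(a v_{M,k}\norm{v_{i_M}}{2}^{q-2}-b v_{M,k}\norm{v_{i_M}}{2}^{r-2})-(a v_{m,k}\norm{v_{i_m}}{2}^{q-2}-b v_{m,k}\norm{v_{i_m}}{2}^{r-2})\big]$, where $i_M\in\mathcal I_{M,k}^t$ and $i_m\in\mathcal I_{m,k}^t$ realize the coordinate-$k$ extrema. Here I would use the uniform bound $|v_{i,k}(t)|\le\max\{\Cab,\max_{i,k}|v_{i,k}(0)|\}$ established at the beginning of this section: on the resulting invariant ball the maps $v\mapsto a\,v_k\norm{v}{2}^{q-2}$ and $v\mapsto b\,v_k\norm{v}{2}^{r-2}$ are Lipschitz (as $2\le q<r$), so a mean-value estimate bounds this contribution by a constant times $\Phi^2$, with the destabilizing forcing part carrying a factor of $a$; choosing $a$ sufficiently small and exploiting the margin above should give $\tfrac12\tfrac{d}{dt}\Phi^2\le -2\DC\,\phi(t)\,\Phi^{p}$. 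I expect this to be the \emph{main obstacle}: the extremal agents $i_M,i_m$ change with $k$ and the norms $\norm{v_{i_M}}{2},\norm{v_{i_m}}{2}$ couple the coordinates, the forcing part of the friction is genuinely destabilizing, and the bound must hold uniformly in $t$ (not merely near $\Phi=0$, where $\Phi^2\ll\Phi^{p}$ would make it automatic) even though $\phi(t)$ is a priori not bounded below — so the smallness of $a$ has to be calibrated carefully against the uniform velocity bounds.

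Finally, I would convert the differential inequality into the stated integral condition. From $\tfrac12\tfrac{d}{dt}\Phi^2\le -2\DC\phi(t)\Phi^{p}$ and the identity $\tfrac{d}{dt}\Phi^{2-p}=(1-\tfrac{p}{2})\Phi^{-p}\tfrac{d}{dt}\Phi^2$, I obtain $\tfrac{d}{dt}\Phi^{2-p}\le -4\DC(1-\tfrac{p}{2})\phi(t)$ wherever $\Phi>0$. Integrating from $0$ to $t$ gives $\Phi(t)^{2-p}\le\Phi(0)^{2-p}-4\DC(1-\tfrac{p}{2})\int_0^t\phi(s)\,ds$, and since $\Phi(0)=\norm{v_M(0)-v_m(0)}{2}$, hypothesis \eqref{Thm: Eq 010} makes the right-hand side reach $0$ at some finite time $T$. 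Because $\Phi^{2-p}\ge 0$, the spread must therefore vanish at some $T_*\le T$; once $\Phi(T_*)=0$ the configuration is in consensus, the discrete $p$-Laplacian vanishes and all agents obey identical scalar dynamics, so $\Phi\equiv 0$ on $[T_*,\infty)$, which is precisely the asserted finite flocking time.
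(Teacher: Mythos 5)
Your setup---the functional $\Phi(t)=\norm{v_M(t)-v_m(t)}{2}$, the dissipation estimate $\DpL v_{M,k}-\DpL v_{m,k}\leq -2\,\psi_{\min}(t)\,(v_{M,k}-v_{m,k})^{p-1}$, and the concluding Bernoulli-type integration---is the same skeleton as the paper's proof, but the step you yourself flag as the ``main obstacle'' is a genuine gap, in two distinct ways. First, you propose to control \emph{both} friction terms by a Lipschitz/mean-value bound of size $\mathrm{const}\cdot\Phi^2$; but only the $a$-part of that constant is small, while the $b$-part is fixed, so a contribution $+bL\Phi^2$ can never be absorbed no matter how small $a$ is. The paper never estimates the $r$-term by its Lipschitz constant: it discards it outright via the sign inequality $-b\,v_{M,k}\norm{v_M}{2}^{r-2}+b\,v_{m,k}\norm{v_m}{2}^{r-2}\leq 0$ paired against $v_{M,k}-v_{m,k}\geq0$, and only then bounds the remaining $q$-term by Lemma \ref{L-inequalities} with $\delta=0$, yielding $+aC_1M^{q-2}\Phi^2$. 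Second, your plan to absorb even that $a$-term into a ``margin'' $2(1-\DC)\phi(t)\Phi^{p}$ cannot work: for $1<p<2$ the $\ell^p$--$\ell^2$ comparison you invoke holds with constant $\DC=1$ (in Remark \ref{Rmk Norm-Equivalent}, $(Nd)^{\frac1p-\frac12}\geq 1$, so $\DC=\min\{1,(Nd)^{\frac1p-\frac12}\}=1$), i.e.\ the margin is exactly zero; and even granting a positive margin, $\phi(t)=\min_{i,j}\cswt$ is not bounded below while $\Phi^2\gg\Phi^{p}$ for $\Phi$ large, so no fixed small $a$ gives $aC_1M^{q-2}\Phi^2\leq c\,\phi(t)\Phi^{p}$ uniformly in $t$. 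Consequently your clean inequality $\frac{d}{dt}\Phi^{2-p}\leq -4\DC\left(1-\frac p2\right)\phi(t)$ is not established.

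The device you are missing is to \emph{keep} the destabilizing term and treat $\frac12(\Phi^2)'\leq -2\DC\,\phi(t)\Phi^{p}+aC_1M^{q-2}\Phi^2$ as a Bernoulli-type inequality: passing to $\Phi^{2-p}$ with the integrating factor $e^{-2a(1-\frac p2)M^{q-2}t}$ gives
\begin{align*}
	\Phi(t)^{2-p}\,e^{-2a\left(1-\frac p2\right)M^{q-2}t}
	\leq
	\Phi(0)^{2-p}
	- 4\DC\left(1-\frac p2\right)\int_0^t \phi(s)\,e^{-2a\left(1-\frac p2\right)M^{q-2}s}\,ds,
\end{align*}
and then, since $\int_0^\infty \phi(s)e^{-2a(1-\frac p2)M^{q-2}s}ds\to\int_0^\infty\phi(s)ds$ as $a\to0$, the \emph{strict} inequality \eqref{Thm: Eq 010} survives for all sufficiently small $a$, so the right-hand side reaches zero at some finite $T$, giving the finite flocking time $T^*\leq T$. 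This is where the hypothesis ``sufficiently small $a$'' actually does its work; in your argument it is placed where it cannot succeed. If you replace your Lipschitz treatment of the $b$-term by the sign argument, and your absorption step by this integrating-factor-plus-limit argument, your proposal becomes essentially the paper's proof.
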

\begin{proof}
	For simplicity, we write $\pmin (t)$ instead of $\min_{i,j\in\Ical} \CSweightT$.
	Since
	\begin{align*}
  		\DpL v_{M,k} - \DpL v_{m,k} 
  		\leq 
  		- 2 \pmin ( v_{M,k} - v_{m,k})^{p-1},
	\end{align*}
	and 
	\begin{align*}
  		- b v_{M,k} \norm{v_M}{2}^{r-2} + b v_{m,k} \norm{v_m}{2}^{r-2} \leq 0,
	\end{align*}
	for $k\in\Kcal$, we have
	\begin{align*}
  		\frac{1}{2}\left( \norm{v_M - v_m }{2}^2\right)^\prime
  		= &
  		\Inpro{v_M - v_m}{v_M^\prime - v_m^\prime}\\
  		\leq &
  		-2 \pmin \norm{v_M - v_m}{p}^p 
  		+ a \Inpro{v_M - v_m}{v_M \norm{v_M}{2}^{q-2} - v_m \norm{v_m}{2}^{q-2} }\\
  		\leq &
  		-2 \pmin \norm{v_M - v_m}{p}^p 
  		+ a \norm{v_M - v_m}{2} \norm{v_M \norm{v_M}{2}^{q-2} - v_m \norm{v_m}{2}^{q-2}}{2}.
	\end{align*}
	Then, by applying Lemma \ref{L-inequalities} with $\delta=0$, we obtain the Bernoulli-type inequality:
	\begin{align*}
  		\frac{1}{2}\left( \norm{v_M - v_m }{2}^2\right)^\prime
  		\leq &
  		-2 \pmin \norm{v_M - v_m}{p}^p 
  		+ a C_1 \norm{v_M - v_m}{2}^2 
  		\left( \norm{v_M}{2} + \norm{v_m}{2}\right)^{q-2}\\
  		\leq &
  		-2 C_m \pmin \norm{v_M - v_m}{2}^p 
  		+ a C_1 M^{q-2} \norm{v_M - v_m}{2}^2,
	\end{align*}
	for some $M>0$.
	Here, the constant $M$ is obtained by the uniform boundedness of $v_{i,k}$.
	By solving it, we deduce
	\begin{align*}
  		\norm{v_M(t) - v_m (t)}{2}^{2-p} e^{-2 a \left(1- \frac{p}{2}\right)  M^{q-2} t} 
  		\leq
  		\norm{v_M(0) - v_m (0)}{2}^{2-p}
  		- 4 C_m \left(1- \frac{p}{2}\right) 
  		\int_0^t \pmin(s) 
  		 e^{-2 a \left(1- \frac{p}{2}\right)  M^{q-2} s} ds.
	\end{align*}
	Since we assume that 
	\begin{align*}
		\norm{v_M (0) - v_m (0)}{2}^{2-p}
		<
  		4 C_m \left( 1 - \frac{p}{2} \right) \int_0^\infty \pmin(s) ds,
	\end{align*}
	and it is clear that 
	\begin{align*}
  		\int_0^\infty \pmin(s) e^{-2 a \left(1- \frac{p}{2}\right)  M^{q-2} s} ds \to \int_0^\infty \pmin(s)  ds \text{ as } a \to 0,
	\end{align*}
	there exists a small enough $\epsilon_0>0$ such that 
	\begin{align}\label{Thm: Eq 020}
		\norm{v_M(0) - v_m (0)}{2}^{2-p}
		<
  		4 C_m \left(1- \frac{p}{2}\right) 
  		\int_0^\infty \pmin(s) e^{-2 a \left(1- \frac{p}{2}\right)  M^{q-2} s} ds,
  		\quad 0<a<\epsilon_0.
	\end{align}
	Therefore, for a sufficiently small $a$, there exists $T>0$ such that
	\begin{align*}
		\norm{v_M(0) - v_m (0)}{2}^{2-p}
		=
  		4 C_m \left(1- \frac{p}{2}\right) 
  		\int_0^T \pmin(s) e^{-2 a \left(1- \frac{p}{2}\right)  M^{q-2} s} ds.
	\end{align*}
	Hence, the system \eqref{Rayleigh friction (1)}-\eqref{Rayleigh friction (2)} has a finite flocking time $T^* \leq T$.
\end{proof}

\begin{remark}
	(i) The important point in the above proof is solving the ordinary differential equation $y\leq -ay^{\frac{p}{2}} + by$. In particular, if $p\geq 2$, then we can only technically know that $y$ decays exponentially or polynomially but not whether it flocks in finite time.
	
	(ii) 
	We should also note that the parameter $a$ must be a very small positive value.
	The reason for this is due to \eqref{Thm: Eq 020} in the proof above.
	Since the term $\pmin$ in \eqref{Thm: Eq 020} depends on the change in distance of the agents over time, the exact information about how small a should be is not known from the above proof.
	The condition that $a$ is small enough will be assumed a few more times in Section 3 and Section 4 of this thesis. However, for the same reasons as above, we do not propose an estimate for $a$ in this paper.
\end{remark}

We now discuss the convergence of $\norm{v_i}{2}$, $i\in\Ical$.
In fact, for an arbitrary initial velocity configuration, we have the three cases that for each $k\in\Kcal$,
\begin{enumerate}
	\item[(i)] there exists $T\geq 0$ such that $v_{i,k}(T)$ is non-negative,
	\item[(ii)] there exists $T\geq 0$ such that $v_{i,k}(T)$ is non-positive,
	\item[(iii)] $v_{m,k}(t) < 0 < v_{M,k}(t)$, for all $t\geq0$.
\end{enumerate}
For cases (i) and (ii), if $v_{i,k} (T) \equiv 0$ with respect to $i$, then $v_{i,k}(t) = 0$ for all $i$, and $t\geq T$.
If $v_{i,k}(T)$ is non-zero and non-negative (or non-positive), then by Lemma \ref{Lm:abbr: Unif Bdd} (or Lemma \ref{Lm:abbr: Unif Bdd(2)}), we have $v_{i,k}(t)>0$ (or $<0$) for all $t\geq T$ or there exists $T'>T$ such that $v_{i,k} (t) = 0$ for all $i$ and $t\geq T'$.
Moreover, if we consider sufficiently small $a$ and $\psi$ satisfying \eqref{Thm: Eq 010}, then by Theorem \ref{(Thm) Consensus for Rayleigh friction}, case (iii) does not occur.
Therefore, for each $k\in\Kcal$, there exists $T\geq 0$ such that either $|v_{i,k}(t)|>0$ or $v_{i,k}(t) =0$ for all $i\in\Ical$ and $t\geq T$.
Hence, only two cases can occur for $v_{i,k}$: there exists $T\geq 0$ such that
\begin{enumerate}
	\item[($\rm{i}^\prime$)] $v_{i,k}(t)=0$ for all $i$, $k$, and $t\geq T$,
	\item[($\rm{ii}^\prime$)] there exists $k\in\Kcal$ such that $|v_{i,k}(t)|>0$ for all $i$, and $t>T$.
\end{enumerate}
For ($\rm{i}^\prime$), it is clear that $\lim_{t\to \infty} \norm{v_i(t)}{2}=0$ for all $i$. For ($\rm{ii}^\prime$), if we assume all the conditions in Theorem \ref{(Thm) Consensus for Rayleigh friction}, then there exists $T'>0$ such that $\DpL v_{i,k} (t) =0$ for all $i$ and $t\geq T'$. 
We have
\begin{align*}
  	\frac{1}{2}\frac{d}{dt}\norm{v_i(t)}{2}^2 
  	=&
  	\Inpro{v_i(t)}{a v_i(t) \norm{v_i(t)}{2}^{q-2} - b v_i(t) \norm{v_i(t)}{2}^{r-2}}\\
  	=&
  	b \norm{v_i(t)}{2}^{q}\left( \frac{a}{b} - \norm{v_i(t)}{2}^{r-q}\right),
  	\quad t\geq T'.
\end{align*}
Since $\norm{v_i(t)}{2}>0$ for all $t>0$, if $\norm{v_i(t)}{2}<\Cab$ on some time interval $I$, then $\norm{v_i(t)}{2}$ is strictly increasing on $I$, and if $\norm{v_i(t)}{2}>\Cab$, then $\norm{v_i(t)}{2}$ is strictly decreasing on $I$.
Hence, $\lim_{t\to\infty}\norm{v_i(t)}{2}=\Cab$.
Therefore, we have the following result:

\label{---}
\begin{theorem}\label{[Thm] Dychotomy Convergence}
	For an arbitrary initial configuration, suppose $a$ is small enough and $\psi$ satisfies 
	\begin{align}\label{[Thm] DC Eq 010}
  		4 C_m \left( 1 - \frac{p}{2} \right) \int_0^\infty \min_{i,j\in\Ical} \CSweightT dt > \norm{v_M (0) - v_m (0)}{2}^{2-p}.
	\end{align}
	Then, either $\lim_{t\to\infty}\norm{v_i(t)}{2}=\Cab$ or there exists $T\geq0$ such that  $\norm{v_i(t)}{2}=0$ for all $t\geq T$.
\end{theorem}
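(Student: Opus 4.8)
The plan is to exploit the finite flocking time guaranteed by Theorem~\ref{(Thm) Consensus for Rayleigh friction}: once all agents share a common velocity the coupling term $\DpL v_{i,k}$ drops out, and the magnitude $\norm{v_i}{2}$ is then governed by a single autonomous scalar ODE whose behavior is transparent. First I would organize the sign behavior of each coordinate $v_{\cdot,k}$ so as to isolate exactly which limiting regimes are possible.

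For each fixed $k\in\Kcal$, exactly one of three situations holds: eventually $v_{i,k}$ is non-negative for all $i$, eventually non-positive, or $v_{m,k}(t)<0<v_{M,k}(t)$ for all $t\geq0$. The key observation is that the hypotheses here are precisely those of Theorem~\ref{(Thm) Consensus for Rayleigh friction}, so there is a finite flocking time; in particular $v_{M,k}(t)=v_{m,k}(t)$ for all large $t$, which is incompatible with the strict separation $v_{m,k}<0<v_{M,k}$ persisting for all time. This rules out the third situation. In the remaining two, Lemma~\ref{Lm:abbr: Unif Bdd} (respectively Lemma~\ref{Lm:abbr: Unif Bdd(2)}) applies and shows that each coordinate is eventually either identically zero or strictly of one sign. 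Collecting these across all $k$ reduces the problem to two mutually exclusive scenarios after some time $T$: (i$'$) $v_{i,k}(t)=0$ for every $i$, $k$, and $t\geq T$; or (ii$'$) some coordinate satisfies $|v_{i,k}(t)|>0$ for all $i$ and $t>T$.

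In scenario (i$'$) the conclusion $\lim_{t\to\infty}\norm{v_i(t)}{2}=0$ is immediate. In scenario (ii$'$), I would invoke the finite flocking time once more to fix $T'\geq T$ with $\DpL v_{i,k}(t)=0$ for all $i$ and $t\geq T'$; the coupling term then vanishes and, taking the inner product of \eqref{Rayleigh friction (2)} with $v_i$, the magnitude obeys
\begin{align*}
	\frac12\frac{d}{dt}\norm{v_i(t)}{2}^2 = b\,\norm{v_i(t)}{2}^{q}\left(\frac{a}{b}-\norm{v_i(t)}{2}^{r-q}\right),\qquad t\geq T'.
\end{align*}
Because $\norm{v_i(t)}{2}>0$ is maintained in this scenario, the right-hand side is positive when $\norm{v_i}{2}<\Cab$ and negative when $\norm{v_i}{2}>\Cab$, so $\norm{v_i(t)}{2}$ is monotone toward the unique positive equilibrium and $\lim_{t\to\infty}\norm{v_i(t)}{2}=\Cab$. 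This yields the stated dichotomy.

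The main obstacle is not the final ODE analysis, which is the standard logistic-type comparison, but the bookkeeping that converts the finite flocking time into a clean decoupling: one must argue that consensus forces $\DpL v_{i,k}\equiv0$ beyond $T'$ and, simultaneously, that the sign-propagation lemmas leave no regime other than (i$'$) and (ii$'$). I would be careful that in scenario (ii$'$) the strict positivity $\norm{v_i}{2}>0$ genuinely persists, so that the equilibrium at $\Cab$, rather than the degenerate equilibrium at $0$, is the one selected; this is exactly what the non-negativity and non-positivity lemmas secure.
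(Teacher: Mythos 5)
Your proposal is correct and follows essentially the same route as the paper: the same trichotomy of sign cases per coordinate, the same use of Theorem~\ref{(Thm) Consensus for Rayleigh friction} to exclude the persistent mixed-sign case and to force $\DpL v_{i,k}\equiv 0$ after the finite flocking time, the same reduction to the two scenarios ($\mathrm{i}'$) and ($\mathrm{ii}'$) via Lemmas~\ref{Lm:abbr: Unif Bdd} and~\ref{Lm:abbr: Unif Bdd(2)}, and the same scalar ODE analysis $\frac12\frac{d}{dt}\norm{v_i}{2}^2=b\norm{v_i}{2}^q\bigl(\frac{a}{b}-\norm{v_i}{2}^{r-q}\bigr)$ with persistent positivity selecting the equilibrium $\Cab$. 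No gaps; this is the paper's argument.
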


We are interested in the non-zero convergence to the solution $v_{i,k}$ of the system in \eqref{Rayleigh friction (1)}-\eqref{Rayleigh friction (2)}.
The next proposition presents a condition for the initial velocity configuration to guarantee the non-zero convergence of $\norm{v_i}{2}$.
By Theorem \ref{[Thm] Norm Convergence to Cab}, 
if there exists $T\geq0$ such that $v_{i,k}(0)$ is non-negative (or non-positive) and non-zero for all $i$ and $k$, then $\norm{v_i(t)}{2}$ converges to $\Cab$ as $t\to\infty$. Hence, we now only discuss the case where $v_{m,k}(0) < 0 < v_{M,k}(0)$ for some $k\in\Kcal$.

\begin{proposition}\label{[Prop] Positive Convergence}
	Suppose $a$ is small enough and $\psi$ satisfies 
	\begin{align*}
  		4 C_m \left( 1 - \frac{p}{2} \right) \int_0^\infty \min_{i,j\in\Ical} \CSweightT dt > \norm{v_M (0) - v_m (0)}{2}^{2-p}.
	\end{align*}
	For all $k\in\Kcal$ satisfying 
	\begin{align*}
  		v_{m,k}(0)<0<v_{M,k}(0),
	\end{align*}
	if $|v_{m,k}(0)|$ is small enough, then $\norm{v_i(t)}{2}\to\Cab$ as $t\to\infty$.
\end{proposition}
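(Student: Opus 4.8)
The plan is to combine the dichotomy of Theorem~\ref{[Thm] Dychotomy Convergence} with an exact conservation identity for the discrete $p$-Laplacian. Under the stated hypotheses the finite flocking time exists by Theorem~\ref{(Thm) Consensus for Rayleigh friction}, so Theorem~\ref{[Thm] Dychotomy Convergence} leaves only two alternatives: either $\norm{v_i(t)}{2}\to\Cab$, which is the desired conclusion, or the configuration collapses, i.e.\ there is $T\ge 0$ with $v_{i,k}(t)=0$ for all $i\in\Ical$, $k\in\Kcal$ and $t\ge T$. Thus it suffices to rule out collapse, and for that it is enough to exhibit one component that stays bounded away from $0$. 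The natural candidate is a straddling component $k$ with $v_{m,k}(0)<0<v_{M,k}(0)$, whose positive part I expect to survive precisely because $|v_{m,k}(0)|$ is small.

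The engine is the antisymmetry of $\DpL$: since $\psi(\Lnorm{x_j-x_i})$ is symmetric in $(i,j)$ while $|v_{j,k}-v_{i,k}|^{p-2}(v_{j,k}-v_{i,k})$ is antisymmetric, one has $\sI{i}\DpL v_{i,k}=0$ for every $k$. Summing \eqref{Rayleigh friction (2)} over $i$ therefore yields the exact identity
\begin{align*}
	\frac{d}{dt}\sI{i} v_{i,k}(t)
	= \sI{i} v_{i,k}(t)\,\norm{v_i(t)}{2}^{q-2}\Big(a - b\,\norm{v_i(t)}{2}^{r-q}\Big).
\end{align*}
On any time interval on which $v_{i,k}(t)\ge 0$ and $\norm{v_i(t)}{2}\le\Cab$ for all $i$, every summand on the right is non-negative, so $\sI{i} v_{i,k}$ is non-decreasing there; once this ``mass'' is strictly positive it cannot return to $0$, which forbids the collapse of the $k$-th component.

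Next I would show that a straddling component becomes non-negative quickly. While $v_{m,k}(t)<0$, the gap to $v_{M,k}(t)>0$ forces $\DpL v_{m,k}(t)\ge \pmin(t)\,(v_{M,k}-v_{m,k})^{p-1}$, a strictly positive upward push bounded below on short time intervals, whereas the only downward friction contribution to $v_{m,k}'$ is $a\,v_{m,k}\norm{v_m}{2}^{q-2}$, of size $O(a\,|v_{m,k}|)$. Hence for $a$ and $|v_{m,k}(0)|$ small we get $v_{m,k}'\ge c_0>0$, so $v_{m,k}$ reaches $0$ at some finite $t_k=O(|v_{m,k}(0)|)$; since $v_{M,k}$ decreases at a bounded rate, over $[0,t_k]$ it drops by at most $O(|v_{m,k}(0)|)$, giving $v_{M,k}(t_k)\ge \tfrac12 v_{M,k}(0)>0$. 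Thus at $t=t_k$ the $k$-th component is non-negative, non-zero and $\sI{i}v_{i,k}(t_k)>0$. Invoking Lemma~\ref{Lm:abbr: Unif Bdd} from $t_k$ gives two cases: either $v_{m,k}(t)>0$ for all $t>t_k$, so the component never vanishes, collapse is excluded, and $\norm{v_i(t)}{2}\to\Cab$; or there is a first return $T'>t_k$ with $v_{m,k}(T')=0$. I would exclude the latter with the identity above: on $[t_k,T']$ one has $v_{i,k}\ge 0$, so whenever all speeds satisfy $\norm{v_i}{2}\le\Cab$ the mass $\sI{i}v_{i,k}$ is non-decreasing and hence $\sI{i}v_{i,k}(T')\ge\sI{i}v_{i,k}(t_k)>0$, contradicting $v_{i,k}(T')\equiv 0$.

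The hard part will be the last exclusion in the regime where some agent has $\norm{v_i(t)}{2}>\Cab$ on $[t_k,T']$, since there the corresponding summand is negative and the monotonicity of $\sI{i}v_{i,k}$ can fail; the delicate point is that all components are coupled only through the common speeds $\norm{v_i}{2}$, and I expect to control this using that the scalar speed dynamics attract each $\norm{v_i}{2}$ toward the positive value $\Cab$ (so large speeds are pushed down toward $\Cab$ rather than to $0$), together with the smallness of $a$ and $|v_{m,k}(0)|$, which keeps the excess of $v_{M,k}(0)$ over the (small) initial negativity from being dissipated before the positive mass can stabilize.
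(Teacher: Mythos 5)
Your reduction to excluding total collapse is the same first move as the paper's (via Theorems \ref{(Thm) Consensus for Rayleigh friction} and \ref{[Thm] Dychotomy Convergence}), and your mass identity $\sum_{i\in\Ical}\DpL v_{i,k}=0$ is correct, as is the computation for $\frac{d}{dt}\sum_{i\in\Ical} v_{i,k}$ and the short-time argument that a straddling component becomes non-negative in time $O(|v_{m,k}(0)|)$. But the proof is not closed: your collapse-exclusion engine works only in the regime $\norm{v_i(t)}{2}\leq\Cab$ for all $i$, and your last paragraph concedes that when some agent has $\norm{v_i(t)}{2}>\Cab$ on $[t_k,T']$ the monotonicity of $\sum_{i\in\Ical}v_{i,k}$ can fail, offering only the expectation that the attraction of the speeds toward $\Cab$ will rescue it. That is a genuine gap, and it sits exactly at the hard point: the components are coupled through the common factor $\norm{v_i}{2}$, so a supercritical speed caused by \emph{other} components can drain the $k$-th mass for an uncontrolled length of time, and nothing in your smallness assumptions quantifies that loss. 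Note also that the integral hypothesis on $\pmin$ enters your argument only through the citation of the flocking theorems, whereas the paper needs it again, quantitatively, precisely in the step you leave open.

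The paper's proof avoids the supercritical regime altogether by staying inside the contradiction: if $\norm{v_i(t)}{2}\not\to\Cab$, then by Theorems \ref{[Thm] Norm Convergence to Cab} and \ref{(Thm) Consensus for Rayleigh friction} there is a \emph{minimal} $T>0$ with $\norm{v_i(t)}{2}=0$ for $t\geq T$, together with a component $k$ satisfying $v_{m,k}(t)<0<v_{M,k}(t)$ on all of $[0,T)$. While $v_{m,k}<0$, the $b$-friction term has a favorable sign ($-b\,v_{m,k}\norm{v_m}{2}^{r-2}\geq0$), so no control of speeds above $\Cab$ is required; one obtains the Bernoulli-type inequality
\begin{align*}
	\frac{d}{dt}\left(-v_{m,k}\right)
	\leq
	-\pmin\,(-v_{m,k})^{p-1}+a\,M^{q-2}(-v_{m,k}),
\end{align*}
whose integrated form, combined with the standing hypothesis that $4C_m(1-\tfrac p2)\int_0^\infty\pmin(s)\,ds$ dominates $\norm{v_M(0)-v_m(0)}{2}^{2-p}$ (and taking $a$ small so the exponentially weighted integral stays large), forces $v_{m,k}$ to reach $0$ at some time $T_0$ with $T_0\to0$ as $|v_{m,k}(0)|\to0$; choosing $|v_{m,k}(0)|$ small enough gives $T_0<T$, contradicting $v_{m,k}<0$ on $[0,T)$. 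Your step producing $t_k$ is essentially a linearized version of this estimate, so the repair is to adopt the paper's framing: extract the component that straddles on all of $[0,T)$ from the dichotomy and run your lower bound (or the Bernoulli inequality) against the collapse time $T$ itself, at which point the mass identity and the problematic $\norm{v_i}{2}>\Cab$ regime become unnecessary.
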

\begin{proof}
	We present a proof by contradiction. 
	Suppose that $\norm{v_i(t)}{2}\not\to\Cab$ as $t\to\infty$.
	By Theorems \ref{[Thm] Norm Convergence to Cab} and \ref{(Thm) Consensus for Rayleigh friction}, there exists a minimum $T>0$ such that $\norm{v_i(t)}{2}=0$ for $t\geq T$. 
	Moreover, there exists $k\in\Kcal$ such that $v_{m,k}(t)<0<v_{M,k}(t)$ for $t\in[0,T)$.
	We calculate
	\begin{align*}
  		\frac{d}{dt}v_{m,k}(t) 
  		\geq & 
  		\pmin ( v_{M,k}(t) - v_{m,k}(t))^{p-1} 
  			+ a v_{m,k}(t) \norm{v_m(t)}{2}^{q-2} - b v_{m,k}(t) \norm{v_m(t)}{2}^{r-2}\\
  		\geq &
  		\pmin ( - v_{m,k}(t))^{p-1} + a v_{m,k}(t) \norm{v_m (t)}{2}^{q-2},
	\end{align*}
	for all $t\in[0,T)$.
	Since $q\geq2$ and $v_{i,k}$ is uniformly bounded, 
	we have
	\begin{align*}
  		\frac{d}{dt}\left( - v_{m,k}(t) \right)
  		\leq & 
  		- \pmin ( - v_{m,k})^{p-1} + a M^{q-2} \left( - v_{m,k} \right).
	\end{align*}
	This Bernoulli-type inequality gives us
	\begin{align*}
  		(- v_{m,k}(t))^{2-p} e^{ -aM^{q-2}(2-p) t }
  		\leq
  		(- v_{m,k}(0))^{2-p} -(2-p)\int_0^t \pmin(s) e^{ -aM^{q-2}(2-p) s }ds
	\end{align*}
	for all $t\in[0,T)$.
	Since $\int_0^\infty \pmin(s) e^{ -aM^{q-2}(2-p) s }ds \to \int_0^\infty \pmin(s)ds$ as $a\to0$, there exists a sufficiently small $a$ such that
	\begin{align*}
  		\int_0^\infty \pmin(s) ds
  		\geq
  		\int_0^\infty \pmin(s) e^{ -aM^{q-2}(2-p) s }ds
  		> 
  		\frac{1}{2C_m (2-p)} \norm{v_M (0) - v_m (0)}{2}^{2-p}.
	\end{align*}
	Since it is clear that
	\begin{align*}
  		\frac{1}{2C_m (2-p)} \norm{v_M (0) - v_m (0)}{2}^{2-p}
  		>
  		\frac{1}{2(2-p)} |v_{m,k}(0)|^{2-p},
	\end{align*}
	there exists $T^*>0$ such that 
	\begin{align*}
  		(- v_{m,k}(0))^{2-p} = 
  		2(2-p)\int_0^{T^*} \pmin(s) e^{ -aM^{q-2}(2-p) s }ds.
	\end{align*}
	Thus, there exists $0<T_0\leq T^*$ such that $v_{m,k}(t)=0$ for all $t \geq T_0$.
	We choose $|v_{m,k}(0)|$ to be sufficiently small such that $T_0<T$,
	which contradicts the assumption $v_{m,k}<0$ on $[0, T)$.
\end{proof}

\begin{remark}
	The condition that $|v_{m,k}(0)|$ must be very small in the above proposition was proposed to derive a contradiction in a proof using the induction method. 
	Therefore, an exact estimate of how small $|v_{m,k}(0)|$ must be is not known. However, the authors conjecture that it should be small enough that the initial velocity configuration is not origin symmetric, but this needs further study. The basis for this conjecture can be seen in examples in Section 6.
\end{remark}


\label{====================================}
\section{Vector-type Rayleigh friction}\label{Section: Abs type}

As observed in the previous section, 
for each $i\in\Ical$,
we could only analyze the convergence of $\norm{v_i}{2}$ for systems \eqref{Rayleigh friction (1)}-\eqref{Rayleigh friction (2)} (that is, the speed of $i$-th agent), and could not obtain information about the convergence of each $v_i$ (i.e., the velocity of $i$-th agent) owing to technical limitations.
In this section, we propose the following nonlinear C-S model with (vector-type) Rayleigh friction to analyze or control the convergence of $v_i$:
\begin{align}
	&\frac{dx_{ik}}{dt} = v_{ik}    \label{Main_eq(1)}\\
	&\frac{dv_{ik}}{dt} = \DpL v_{ik} + a_k \varphi_q(v_{ik}) 
	- b_k \varphi_r(v_{ik}), \label{Main_eq(2)}
\end{align}
where $p>1$, $1<q<r$, and $\varphi_\gamma(s) := |s|^{\gamma-2}s$, $s\in\mathbb{R}$, $\gamma>1$.
We note that unlike in Section 3, $q>1$ is assumed for well-definedness by the definition of $\varphi_\gamma$, and when we discuss the finite flocking time, we assume $1<p<2$ and $p<q<r$.

We first discuss the uniform boundedness of $v_{i,k}$ for $p>1$ and $q<r$.
Since $\DpL v_{M,k} \leq 0$ and $\DpL v_{m,k} \geq 0$, it follows from \eqref{Main_eq(2)} that
\begin{align}
	\frac{d}{dt} v_{M,k} 
		\leq & 
		b_k \varphi_q(v_{M,k}) \left( \frac{a_k}{b_k} - |\varphi_{r-q+1} (v_{M,k})| \right), \label{unifbd ineq}
\end{align}
and similarly,
\begin{align}\label{Unifbd Ineq 2}
	\frac{d}{dt} v_{m,k}
	\geq  
	b_k \varphi_q(v_{m,k}) \left( \frac{a_k}{b_k} - |\varphi_{r-q+1} (v_{m,k})| \right),
\end{align}
for $k\in\Kcal$.
Therefore, if $v_{M,k} >\Cabk$ on some time interval $I$, then $v_{M,k}$ is strictly decreasing on $I$. 
If $v_{m,k} < -\Cabk$ on some time interval $I$, then $v_{m,k}$ is strictly increasing on $I$.
Since $v_{i,k}$ is continuous, we conclude that $v_{i,k}$ is uniformly bounded with respect to $t$ for all $i$ and $k$, and we present the constant of uniform boundedness using the next two results.

\begin{lemma}\label{Lm Upper Bound}
	For a fixed $k\in\Kcal$, if $v_{M,k}(T) \leq \Cabk$ for some $T\geq0$, then
	\begin{align*}
		v_{M,k}(t) \leq \Cabk, \quad t\geq T.
	\end{align*}
	If $v_{m,k}(T) \geq \Cabk$ for some $T\geq0$, then 
	\begin{align*}
		v_{m,k}(t) \geq \Cabk, \quad t\geq T.
	\end{align*}
\end{lemma}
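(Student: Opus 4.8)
The plan is to argue by contradiction, closely following the invariant-region argument used for Lemma \ref{(Lm) norm of v_M < Cab}, but now at the level of the scalar quantities $v_{M,k}$ and $v_{m,k}$ rather than of the norm. The engine is the pair of scalar differential inequalities \eqref{unifbd ineq} and \eqref{Unifbd Ineq 2}, together with the facts recorded in Section~\ref{Section: pre} that $v_{M,k}$ and $v_{m,k}$ are Lipschitz, differentiable almost everywhere, and satisfy the fundamental theorem of calculus on each interval (with the index-switching convention handling the at-most-countably-many exceptional times).

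For the upper bound I would suppose toward a contradiction that $v_{M,k}(T)\le\Cabk$ yet there is some $\tau_0>T$ with $v_{M,k}(\tau_0)>\Cabk$. Letting $t_0:=\sup\{t\in[T,\tau_0]:v_{M,k}(t)\le\Cabk\}$, continuity gives $v_{M,k}(t_0)=\Cabk$ and $v_{M,k}(t)>\Cabk$ for all $t\in(t_0,\tau_0]$. On that interval $v_{M,k}>\Cabk>0$, so $\varphi_q(v_{M,k})>0$ and $|\varphi_{r-q+1}(v_{M,k})|=v_{M,k}^{\,r-q}>\Cabk^{\,r-q}=\tfrac{a_k}{b_k}$; hence the bracket in \eqref{unifbd ineq} is negative and $v_{M,k}^\prime<0$ almost everywhere on $(t_0,\tau_0)$. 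Integrating then yields $v_{M,k}(\tau_0)\le v_{M,k}(t_0)=\Cabk$, contradicting $v_{M,k}(\tau_0)>\Cabk$.

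The lower bound for $v_{m,k}$ is the mirror image, using \eqref{Unifbd Ineq 2}. Assuming $v_{m,k}(T)\ge\Cabk$ but $v_{m,k}(\tau_0)<\Cabk$ for some $\tau_0>T$, I would set $t_0:=\sup\{t\in[T,\tau_0]:v_{m,k}(t)\ge\Cabk\}$, so that $v_{m,k}(t_0)=\Cabk$ and $v_{m,k}(t)<\Cabk$ just afterwards. Since $v_{m,k}$ is continuous and equals $\Cabk>0$ at $t_0$, it stays in $(0,\Cabk)$ on a right neighborhood of $t_0$, where $|\varphi_{r-q+1}(v_{m,k})|=v_{m,k}^{\,r-q}<\tfrac{a_k}{b_k}$ makes the bracket in \eqref{Unifbd Ineq 2} positive and forces $v_{m,k}^\prime>0$; thus $v_{m,k}$ is increasing there and cannot have fallen below $\Cabk$, which is the contradiction.

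I expect the only genuine obstacle to be the sign bookkeeping in the second case, namely confirming that the trajectory remains on the positive side of the origin so that $\varphi_q(v_{m,k})$ carries the required sign. This is exactly why the invariant-region structure (the trajectory is trapped in $(0,\Cabk)$ and driven upward) must be invoked rather than a crude one-step estimate; the differentiability and integration issues are already absorbed by the conventions established in Section~\ref{Section: pre}.
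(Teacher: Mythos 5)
Your proposal is correct and follows essentially the same route as the paper: a proof by contradiction driven by the differential inequalities \eqref{unifbd ineq} and \eqref{Unifbd Ineq 2}, whose brackets have the wrong sign once $v_{M,k}$ exceeds (resp.\ $v_{m,k}$ drops below) the threshold $\left(\frac{a_k}{b_k}\right)^{\frac{1}{r-q}}$. The only cosmetic difference is bookkeeping: the paper extracts an interval on which $v_{M,k}^\prime>0$ and $v_{M,k}>\left(\frac{a_k}{b_k}\right)^{\frac{1}{r-q}}$ hold simultaneously and contradicts the sign pointwise, whereas you use a last-crossing time and integrate the negative derivative, which is an equivalent barrier argument.
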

\begin{proof}
	We present a proof by contradiction. Suppose that there exists $T'>T$ such that $v_{M,k}(T')>\Cabk$. 
	Then, there exists $(t_1, t_2)\subset (T,T')$ such that $v_{M,k}^\prime(t) >0$ and $v_{M,k}(t) > \Cabk$ for $t\in (t_1,t_2)$.
	Then, \eqref{unifbd ineq} implies
	\begin{align*}
		0 < v_{M,k}^\prime (t) \leq b_k \varphi_q(v_{M,k}(t)) \left( \frac{a_k}{b_k} - \left|\varphi_{r-q+1} (v_{M,k}(t))\right| \right)<0,\quad t \in (t_1, t_2),
	\end{align*}
	which is a contradiction.	
	Thus, we have the desired result. 
	Finally, by applying a method similar to the above proof to \eqref{Unifbd Ineq 2}, we can establish $v_{m,k}(t) \geq \Cabk$, $t\geq T$.	
\end{proof}

\begin{proposition}\label{Prop Uniformly Bounded for v}
	For $p>1$ and $q<r$, and $k\in\Kcal$, the solution $v_{i,k}$ satisfies 
	\begin{align*}
  		B_{m,k}:=\min \left\{v_{m,k}(0) , -\Cabk \right\} 
  		\leq v_{i,k} (t)
  		\leq 
  		\max \left\{v_{M,k}(0) , \Cabk \right\} =:B_{M,k}
	\end{align*}
	for all $i\in\Ical$ and $t\geq 0$.
\end{proposition}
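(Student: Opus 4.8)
The plan is to reduce the componentwise bound to bounds on the extremal trajectories $v_{M,k}$ and $v_{m,k}$, and then to combine the monotonicity encoded in \eqref{unifbd ineq}--\eqref{Unifbd Ineq 2} with the invariance statement of Lemma \ref{Lm Upper Bound}. Since $v_{m,k}(t) \leq v_{i,k}(t) \leq v_{M,k}(t)$ for every $i\in\Ical$ by the very definition of the maximum and minimum functions, it suffices to show $v_{M,k}(t) \leq B_{M,k}$ and $v_{m,k}(t) \geq B_{m,k}$ for all $t\geq 0$, and the lower bound will follow from the upper one by symmetry.

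For the upper bound I would distinguish two cases according to the sign of $v_{M,k}(0) - \Cabk$. If $v_{M,k}(0) \leq \Cabk$, then Lemma \ref{Lm Upper Bound} applied with $T=0$ gives $v_{M,k}(t) \leq \Cabk \leq B_{M,k}$ for all $t\geq 0$ immediately. If instead $v_{M,k}(0) > \Cabk$, so that $B_{M,k} = v_{M,k}(0)$, I would argue that $v_{M,k}$ can never exceed its initial value. The key observation is that whenever $v_{M,k}(t) > \Cabk$ the right-hand side of \eqref{unifbd ineq} is strictly negative: there $\varphi_q(v_{M,k}) = v_{M,k}^{q-1} > 0$ while $|\varphi_{r-q+1}(v_{M,k})| = v_{M,k}^{r-q} > (\Cabk)^{r-q} = a_k/b_k$, so $v_{M,k}$ is strictly decreasing as long as it stays above $\Cabk$. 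Setting $T_0 := \inf\{t\geq 0 : v_{M,k}(t) \leq \Cabk\}$, on $[0,T_0)$ the trajectory strictly decreases from $v_{M,k}(0)$; and if $T_0 < \infty$, then $v_{M,k}(T_0) = \Cabk$ by continuity, after which Lemma \ref{Lm Upper Bound} keeps $v_{M,k}(t) \leq \Cabk \leq B_{M,k}$. In either case $v_{M,k}(t) \leq v_{M,k}(0) = B_{M,k}$ for all $t\geq 0$.

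The lower bound on $v_{m,k}$ then follows by the mirror-image argument, splitting on whether $v_{m,k}(0) \geq -\Cabk$, using that the right-hand side of \eqref{Unifbd Ineq 2} is strictly positive whenever $v_{m,k} < -\Cabk$, and invoking the second half of Lemma \ref{Lm Upper Bound} once the trajectory returns to $-\Cabk$.

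I expect the only delicate point to be bookkeeping rather than genuine analysis. Because $v_{M,k}$ and $v_{m,k}$ are merely Lipschitz and are differentiable only away from the switching times $\{s_l\}$ and $\{t_l\}$, the strict-monotonicity argument has to be read on each interval where the extremal index is fixed, with continuity of $v_{M,k}$ and $v_{m,k}$ gluing the pieces together; this is exactly the convention already fixed in Section~\ref{Section: pre}. Getting this gluing stated cleanly, so that the infimum $T_0$ and the continuity at $T_0$ are justified, is the main thing to be careful about.
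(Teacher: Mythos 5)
Your proposal is correct and follows essentially the same route as the paper's own proof: the same case split on $v_{M,k}(0)$ versus $\left(\frac{a_k}{b_k}\right)^{\frac{1}{r-q}}$, with Lemma \ref{Lm Upper Bound} handling the first case and strict decrease of $v_{M,k}$ from \eqref{unifbd ineq} (until the trajectory reaches the threshold, after which the lemma takes over) handling the second, plus the mirror argument for $v_{m,k}$. You merely spell out more carefully, via the infimum $T_0$ and the Section~\ref{Section: pre} conventions on the switching times, what the paper compresses into ``there exists $T>0$ ($T$ may be infinite) such that $v_{M,k}$ is strictly decreasing on $[0,T)$.''
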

\begin{proof}
	If $v_{M,k}(0) \leq \Cabk$, then by Lemma \ref{Lm Upper Bound}, $v_{M,k}(t) \leq \Cabk$ for all $t\geq 0$. 
	If $v_{M,k}(0) > \Cabk$, then there exists $T>0$ ($T$ may be infinite) such that $v_{M,k}$ is strictly decreasing on $[0,T)$.
	Therefore, we have 
	\begin{align*}
  		v_{i,k}(t) \leq v_{M,k}(t) 
  		\leq \max \left\{v_{M,k}(0) , \Cabk \right\}.
	\end{align*}
	Similarly, we also have
	\begin{align*}
  		v_{i,k}(t) \geq v_{m,k}(t) 
  		\geq \min \left\{v_{m,k}(0) , -\Cabk \right\}.
	\end{align*}
\end{proof}

From Lemma \ref{Lm Upper Bound}, we can obtain the following trichotomy-type result.

\begin{proposition}\label{Prop Trichotomy}
	For a fixed $k\in\Kcal$, there exists $T\geq 0$ that allows only one of the following to be true:
	\begin{itemize}
	\item[\textup{(i)}] $v_{M,k}(t) \leq \Cabk$, $t\geq T$,
	\item[\textup{(ii)}] $v_{m,k}(t) \geq \Cabk$, $t\geq T$,
	\item[\textup{(iii)}] $v_{m,k}(t) < \Cabk < v_{M,k}(t)$, $t\geq T$,
	\end{itemize}
	for an arbitrary initial configuration.
\end{proposition}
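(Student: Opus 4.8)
The plan is to mirror the argument behind the norm-type trichotomy, Proposition \ref{(Prop) Trichotomy for Rayleigh friction}, replacing the norm invariance lemmas by the scalar forward-invariance principle recorded in Lemma \ref{Lm Upper Bound}. The essential feature I would exploit is that Lemma \ref{Lm Upper Bound} turns the two barriers $v_{M,k} = \Cabk$ and $v_{m,k} = \Cabk$ into one-sided traps: once $v_{M,k}$ drops to or below $\Cabk$ it can never rise above it again, and once $v_{m,k}$ reaches or exceeds $\Cabk$ it can never fall below it again. Because $v_{M,k}$ and $v_{m,k}$ are Lipschitz, hence continuous, by Proposition \ref{Prop Uniformly Bounded for v} together with the preliminaries, the entire dynamics is therefore classified by whether each barrier is ever crossed.

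Concretely, I would perform a case analysis on the two events
\[
A:\ \exists\, T_1 \geq 0 \text{ with } v_{M,k}(T_1) \leq \Cabk,
\qquad
B:\ \exists\, T_2 \geq 0 \text{ with } v_{m,k}(T_2) \geq \Cabk.
\]
If $A$ holds, the first part of Lemma \ref{Lm Upper Bound} gives $v_{M,k}(t) \leq \Cabk$ for all $t \geq T_1$, which is case (i) with $T = T_1$. If $B$ holds, the second part gives $v_{m,k}(t) \geq \Cabk$ for all $t \geq T_2$, which is case (ii) with $T = T_2$. If neither $A$ nor $B$ holds, then $v_{M,k}(t) > \Cabk$ and $v_{m,k}(t) < \Cabk$ for every $t \geq 0$, so that $v_{m,k}(t) < \Cabk < v_{M,k}(t)$ for all $t \geq 0$, which is case (iii) with $T = 0$. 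This shows that for a suitable $T$ at least one of (i)--(iii) describes the behavior on $[T,\infty)$.

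For the exclusivity assertion I would note that, since $v_{m,k}(t) \leq v_{M,k}(t)$ for all $t$, case (iii) is incompatible with both (i) and (ii): alternative (iii) requires $v_{M,k} > \Cabk$, contradicting (i), and $v_{m,k} < \Cabk$, contradicting (ii). The only way (i) and (ii) can hold simultaneously is the degenerate boundary configuration $v_{m,k} \equiv v_{M,k} \equiv \Cabk$, in which all agents already share the equilibrium velocity; outside this degenerate case exactly one alternative holds. I do not anticipate a genuine obstacle here, since the entire content is the forward invariance supplied by Lemma \ref{Lm Upper Bound} and the proof is essentially the scalar transcription of Proposition \ref{(Prop) Trichotomy for Rayleigh friction}. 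The only point requiring a line of care is the degenerate overlap of (i) and (ii) just described, which I would flag explicitly rather than conceal inside a claim of strict trichotomy.
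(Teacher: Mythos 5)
Your proposal is correct and follows exactly the paper's route: the paper's entire proof is ``It is trivial by Lemma \ref{Lm Upper Bound},'' and your case analysis on the events $A$ and $B$ is precisely the fleshed-out version of that forward-invariance argument. Your explicit flagging of the degenerate overlap $v_{m,k}\equiv v_{M,k}\equiv\left(\frac{a_k}{b_k}\right)^{\frac{1}{r-q}}$, where (i) and (ii) coincide, is a point of care the paper silently omits, and it is handled correctly.
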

\begin{proof}
	It is trivial by Lemma \ref{Lm Upper Bound}.
\end{proof}

So far, we have discussed the convergence of $v_{i,k}$ as $t\to\infty$.
If we assume a positive condition on the initial velocity configuration, we obtain the following positivity property.

\begin{lemma}\label{Lm Non-negativity}
	For a fixed $k\in\Kcal$, 
	\begin{itemize}
		\item[\textup{(i)}] if there exists $T>0$ such that $v_{i,k} (T)$ is non-negative and non-zero for $i\in\Ical$, then $v_{m,k}(t) >0$ for all $t > T$.
		\item[\textup{(ii)}] if there exists $T>0$ such that $v_{i,k} (T)$ is non-positive and non-zero for $i\in\Ical$, then $v_{M,k}(t) <0$ for all $t > T$.
	\end{itemize}
\end{lemma}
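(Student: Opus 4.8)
The plan is to follow the same dichotomy as in the proof of Lemma~\ref{Lm:abbr: Unif Bdd}, but to exploit the coordinatewise restoring term $a_k\varphi_q(v_{i,k})$ in \eqref{Main_eq(2)} in order to \emph{rule out} the collapse of $v_{m,k}$ to zero, which is exactly the scenario the norm-type lemma had to permit. First I would note the sign symmetry: since $\DpL$, $\varphi_q$ and $\varphi_r$ are all odd (the weight $\psi(\Lnorm{x_j-x_i})$ is unchanged under $x\mapsto -x$), the pair $(-x_{i,k},-v_{i,k})$ solves \eqref{Main_eq(1)}--\eqref{Main_eq(2)} whenever $(x_{i,k},v_{i,k})$ does. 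Hence part~(ii) follows from part~(i) applied to $-v_{i,k}$, and I would only prove~(i).

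For~(i) I would first dispose of the boundary case $v_{m,k}(T)=0$. Non-negativity and non-triviality of $v_{\cdot,k}(T)$ give $0=v_{m,k}(T)<v_{M,k}(T)$, so $\DpL v_{m,k}(T)$ is a sum (over the minimizing index) of the nonnegative terms $\psi(\cdots)\,\varphi_p(v_{j,k}(T)-v_{m,k}(T))$, at least one of which is strictly positive because some $v_{j,k}(T)>0$ and the weight is positive (the same strict-positivity step as in Lemma~\ref{Lm:abbr: Unif Bdd}); the friction terms vanish at $v_{m,k}(T)=0$. Thus $v_{m,k}'(T)>0$, so $v_{m,k}>0$ on some interval $(T,T+\delta)$. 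Fixing $t_0\in(T,T+\delta)$ then reduces matters to the case $v_{m,k}(t_0)>0$, and it suffices to prove $v_{m,k}(t)>0$ for all $t\ge t_0$, which combined with positivity on $(T,t_0]$ yields $v_{m,k}(t)>0$ for all $t>T$.

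The heart of the argument is that $v_{m,k}$ cannot decrease through small positive values. By \eqref{Unifbd Ineq 2}, at a.e.\ $t$ with $0<v_{m,k}(t)<\Cabk$,
\[
  v_{m,k}'(t)\ \ge\ b_k\,\varphi_q\big(v_{m,k}(t)\big)\Big(\tfrac{a_k}{b_k}-\big|\varphi_{r-q+1}(v_{m,k}(t))\big|\Big)
  = v_{m,k}(t)^{q-1}\big(a_k-b_k\,v_{m,k}(t)^{r-q}\big)>0 ,
\]
since $v_{m,k}(t)^{r-q}<a_k/b_k$ in that range. Because $v_{m,k}$ is Lipschitz, hence absolutely continuous, by the discussion in Section~\ref{Section: pre}, this almost-everywhere inequality forces $v_{m,k}$ to be strictly increasing on any subinterval on which it stays inside $(0,\Cabk)$. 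Consequently, if $v_{m,k}(t_0)>0$ but $v_{m,k}(\tau)=0$ at some first $\tau>t_0$, then on a left-neighborhood $(\tau-\epsilon,\tau)$ we have $0<v_{m,k}<\Cabk$ with $v_{m,k}$ strictly increasing, so $\lim_{t\to\tau^-}v_{m,k}(t)\ge v_{m,k}(\tau-\epsilon/2)>0$, contradicting $v_{m,k}(\tau)=0$. Therefore no such $\tau$ exists and $v_{m,k}(t)>0$ for all $t>T$.

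The one delicate point is already handled in the Preliminaries: $v_{m,k}$ is a pointwise minimum of $C^1$ functions and is only differentiable almost everywhere, so both the differential inequality (read at an index attaining the minimum, where $\DpL v_{m,k}\ge 0$) and the resulting monotonicity must be interpreted in the a.e./absolutely-continuous sense; with that convention the comparison is routine. The conceptual content, and the reason the conclusion here is strictly stronger than in Lemma~\ref{Lm:abbr: Unif Bdd}, is that the coordinatewise term $a_k\varphi_q(v_{m,k})$ dominates the dissipative term $b_k\varphi_r(v_{m,k})$ near $v_{m,k}=0$ and thus acts as a positive restoring force, upgrading the norm-type ``either positive or eventually zero'' alternative to genuine, permanent strict positivity.
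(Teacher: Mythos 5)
Your proposal is correct and follows essentially the same route as the paper's own proof: the sign symmetry $(x,v)\mapsto(-x,-v)$ reduces (ii) to (i), the boundary case $v_{m,k}(T)=0$ is handled by computing $v_{m,k}'(T)=\Delta_{p,\psi}v_{m,k}(T)>0$ with the friction terms vanishing, and strict positivity is propagated via the differential inequality \eqref{Unifbd Ineq 2} forcing $v_{m,k}$ to increase while in $\bigl(0,\left(\frac{a_k}{b_k}\right)^{\frac{1}{r-q}}\bigr)$. Your explicit first-zero continuity argument merely fills in the step the paper dismisses as ``clear'' (and sidesteps its appeal to Lemma \ref{Lm Upper Bound} for the case $v_{m,k}(T)\geq\left(\frac{a_k}{b_k}\right)^{\frac{1}{r-q}}$), so it is a welcome tightening rather than a different method.
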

\begin{proof}
	(i)
	By \eqref{Unifbd Ineq 2}, if $0< v_{m,k}(t) < \Cabk$ on some interval $(t_1, t_2)$, then $v_{m,k}$ is strictly increasing on $(t_1, t_2)$. 
	Moreover, if $v_{m,k}(T) \geq \Cabk$, then by Lemma \ref{Lm Upper Bound}, $v_{m,k}(t) \geq \Cabk$ for all $t\geq T$.
	Therefore, for $v_{m,k}(T) >0$, it is clear that $v_{m,k}(t)>0$ for all $t\geq T$. 
	We now consider the case where $v_{m,k}(T) =0$. 
	Since $v_{i,k}(T)$ is non-zero with respect to $i$, $v_{M,k}(T) >0$ for all $i\in\Ical$.
	Thus, from \eqref{Main_eq(2)}, we have
	\begin{align*}
		v_{m,k}^\prime(T) = \DpL v_{m,k} (T) = \sI{j} \csw |v_{j,k}(T)|^{p-2} v_{j,k}(T)>0,
	\end{align*}
	which implies that there exists a short interval $(T, T')$ such that $v_{m,k}(t)$ is strictly increasing on $(T, T')$.
	Finally, since $v_{m,k}(T') >0$, using the same argument above, we obtain $v_{m,k} (t) >0$ for all $t>T$.
	
	(ii)
	Since $-x_i$ and $-v_i$ are also solutions to \MSys{}, 
	by the arguments in (i), we can obtain the desired result.
\end{proof}

We have now discussed the convergence of the velocities of the agents in System \MSys.
Next, we provide estimations of the velocities, which imply asymptotic flocking.

\begin{lemma}\label{Lm limit 1}
	Let $k\in\Kcal$ be fixed. 
	Then, we have $\lim_{t\to\infty} v_{M,k}(t) \leq \Cabk$ and $\lim_{t\to\infty} v_{m,k}(t) \geq -\Cabk$ for any initial configuration.
\end{lemma}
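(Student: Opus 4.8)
The plan is to mirror the two-step structure used in the proof of Theorem \ref{[Thm] Norm Convergence to Cab}, now working with the scalar quantities $v_{M,k}$ and $v_{m,k}$ directly rather than with the norm. First I would observe that the two assertions are equivalent under the reflection $v_{i,k}\mapsto -v_{i,k}$: since $-x_i$ and $-v_i$ also solve \MSys{} and this map sends $v_{M,k}$ to $-v_{m,k}$, the bound $\lim_{t\to\infty}v_{M,k}(t)\leq\Cabk$ applied to the reflected solution immediately yields $\lim_{t\to\infty}v_{m,k}(t)\geq-\Cabk$. Hence it suffices to establish the upper bound for $v_{M,k}$.

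To prove $\lim_{t\to\infty}v_{M,k}(t)\leq\Cabk$, I would split into two cases. In the first case there is some $T\geq0$ with $v_{M,k}(T)\leq\Cabk$; then Lemma \ref{Lm Upper Bound} gives $v_{M,k}(t)\leq\Cabk$ for all $t\geq T$, so the upper bound holds in the limit (indeed in limsup) immediately. In the remaining case $v_{M,k}(t)>\Cabk$ for all $t\geq0$; here \eqref{unifbd ineq} shows $v_{M,k}^\prime<0$, so $v_{M,k}$ is strictly decreasing and bounded below by $\Cabk$, hence converges to some $\alpha\geq\Cabk$. The remaining goal is to rule out $\alpha>\Cabk$.

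The main obstacle is this final step. Assuming $\alpha>\Cabk$, I would bound the right-hand side of \eqref{unifbd ineq} by a strictly negative constant uniformly in $t$. This rests on a sign analysis of the product $b_k\varphi_q(v_{M,k})\bigl(\tfrac{a_k}{b_k}-|\varphi_{r-q+1}(v_{M,k})|\bigr)$: for $v_{M,k}\geq\alpha>\Cabk>0$ the factor $\varphi_q(v_{M,k})=v_{M,k}^{q-1}$ is positive and bounded below by $\alpha^{q-1}$, while the second factor equals $\tfrac{a_k}{b_k}-v_{M,k}^{r-q}$, which is negative and bounded above by $\tfrac{a_k}{b_k}-\alpha^{r-q}<0$. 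Multiplying a quantity $\geq A_0>0$ by a quantity $\leq B_0<0$ gives a product $\leq A_0B_0$, so $v_{M,k}^\prime\leq b_k\alpha^{q-1}\bigl(\tfrac{a_k}{b_k}-\alpha^{r-q}\bigr)=:-c<0$. Integrating from $0$ to $t$ then forces $v_{M,k}(t)\to-\infty$, contradicting $v_{M,k}(t)\geq\alpha>0$. Thus $\alpha=\Cabk$, which completes the upper bound, and the lower bound on $v_{m,k}$ follows from the reflection identified in the first paragraph.
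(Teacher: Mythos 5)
Your proposal is correct and takes essentially the same route as the paper's proof: the same dichotomy (either $v_{M,k}(T)\leq\Cabk$ at some time, settled by Lemma \ref{Lm Upper Bound}, or $v_{M,k}>\Cabk$ forever, in which case monotone convergence to some $\alpha\geq\Cabk$ and integration of the uniform negative bound $v_{M,k}^\prime\leq b_k\alpha^{q-1}\left(\frac{a_k}{b_k}-\alpha^{r-q}\right)<0$ from \eqref{unifbd ineq} rules out $\alpha>\Cabk$), with the reflection symmetry giving the $v_{m,k}$ bound exactly as in the paper, merely invoked at the start rather than the end.
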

\begin{proof}
	We first consider the simple case where $v_{M,k}(0) \leq \Cabk$.
	Then, by Lemma \ref{Lm Upper Bound}, it is clear that $\lim_{t\to\infty} v_{M,k}(t) \leq \Cabk$.
	We now assume that $v_{M,k}(0) > \Cabk$. 
	Then, there exists $T>0$ such that $v_{M,k}(t) > \Cabk$ for all $t\in[0,T)$.
	Moreover, it follows from \eqref{unifbd ineq} that $v_{M,k}$ is strictly decreasing on $(0, T)$.
	We consider the maximum $T$ that keeps $v_{M,k}$ strictly decreasing on the interval $(0, T)$.
	Then, we have the two cases of $T=\infty$ and $T<\infty$.
	In the case where $T<\infty$, since $T$ is the maximum time satisfying $v_{M,k}^\prime>0$, we have $v_{M,k}(T) = \Cabk$.
	Thus, by Lemma \ref{Lm Upper Bound}, we have the desired result. 
	For $T=\infty$, since $v_{M,k}$ is strictly decreasing and bounded below on $(0,\infty)$, $v_{M,k}$ converges to $\alpha\geq \Cabk$.
	We claim that $\alpha = \Cabk$. By contrast, suppose that $\alpha >\Cabk$.
	Then, it follows from \eqref{unifbd ineq} that
	\begin{align*}
		v_{M,k}^\prime(t)
		\leq 
		 - b_k \varphi_q(v_{M,k}(t)) \left(  \varphi_{r-q+1} (v_{M,k}(t)) - \frac{a_k}{b_k} \right)
		\leq 
		- b_k \alpha^{q-1} \left(  \alpha^{r-q} - \frac{a_k}{b_k} \right)<0,
		\quad t\geq 0.
	\end{align*}
	By integrating it from 0 to $t$, we deduce
	\begin{align*}
		v_{M,k}(t) - v_{M,k}(0) = \int_0^t v_{M,k}^\prime(s) ds \leq - b_k \alpha^{q-1} \left(  \alpha^{r-q} - \frac{a_k}{b_k} \right) t,
	\end{align*}
	for all $t\geq T$, but this leads to a contradiction as $t \to \infty$.
	Therefore, $\alpha = \Cabk$. 
	Hence, for any initial configuration, we have $\lim_{t\to\infty} v_{M,k}(t) \leq \Cabk$.
	Finally, since $-x_{ik}$ and $-v_{ik}$ are also solutions to system \MSys, it is clear that $\lim_{t\to\infty} v_{m,k}(t) \geq -\Cabk$.	
\end{proof}

\label{---}
\begin{theorem}\label{Th Consensus 1}
	For a fixed $k\in\Kcal$, 
	if $v_{i,k}(0)$ is non-negative and non-zero with respect to $i\in\Ical$,
	then $\lim_{t\to\infty} v_{i,k}(t) = \Cabk$ for all $i\in\Ical$.
\end{theorem}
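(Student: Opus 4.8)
The plan is to sandwich each $v_{i,k}$ between the running minimum $v_{m,k}$ and maximum $v_{M,k}$ and show both endpoints converge to $\Cabk$. The upper endpoint is essentially free: Lemma \ref{Lm limit 1} already gives $\limsup_{t\to\infty} v_{M,k}(t) \leq \Cabk$. So the entire content of the argument is to establish the matching lower bound $\liminf_{t\to\infty} v_{m,k}(t) \geq \Cabk$, and the theorem then follows from $v_{m,k}(t) \leq v_{i,k}(t) \leq v_{M,k}(t)$.

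First I would fix the sign. Since $v_{i,k}(0)$ is non-negative and non-zero, Lemma \ref{Lm Non-negativity}(i) yields $v_{m,k}(t) > 0$ for all $t > 0$. On this positive region the absolute value in the friction term disappears, so \eqref{Unifbd Ineq 2} becomes
\[
	v_{m,k}^\prime(t) \geq b_k\, v_{m,k}(t)^{q-1}\Big( \tfrac{a_k}{b_k} - v_{m,k}(t)^{r-q} \Big), \quad t > 0,
\]
whose right-hand side is strictly positive exactly when $0 < v_{m,k}(t) < \Cabk$ (here $r-q > 0$ and $q - 1 > 0$).

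Next I would split into the two relevant branches of Proposition \ref{Prop Trichotomy}. If there is some $T$ with $v_{m,k}(T) \geq \Cabk$, then Lemma \ref{Lm Upper Bound} forces $v_{m,k}(t) \geq \Cabk$ for all $t \geq T$, giving the lower bound directly. Otherwise $0 < v_{m,k}(t) < \Cabk$ for all $t > 0$, and the displayed inequality makes $v_{m,k}$ strictly increasing, hence convergent to some $\beta \in (0, \Cabk]$. The main obstacle is excluding $\beta < \Cabk$; I expect this to mirror Step~2 of Theorem \ref{[Thm] Norm Convergence to Cab} and the $T=\infty$ branch of Lemma \ref{Lm limit 1}. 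Concretely: fix $t_0 > 0$, so that for $t \geq t_0$ one has $v_{m,k}(t) \geq v_{m,k}(t_0) > 0$ and $v_{m,k}(t)^{r-q} \leq \beta^{r-q} < a_k/b_k$; the right-hand side above is then bounded below by the positive constant $\eta := b_k\, v_{m,k}(t_0)^{q-1}\big(\tfrac{a_k}{b_k} - \beta^{r-q}\big)$, and integrating gives $v_{m,k}(t) \geq v_{m,k}(t_0) + \eta(t - t_0) \to \infty$, contradicting $v_{m,k} \to \beta < \infty$. Hence $\beta = \Cabk$. The only delicate point is securing the uniform positive lower bound on $v_{m,k}^{q-1}$, which uses $q > 1$ together with monotonicity so that the growth estimate is genuinely linear rather than degenerating near $0$.

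Finally I would assemble the sandwich. In either branch $\liminf_{t\to\infty} v_{m,k}(t) \geq \Cabk$, while Lemma \ref{Lm limit 1} gives $\limsup_{t\to\infty} v_{M,k}(t) \leq \Cabk$. Since $v_{m,k}(t) \leq v_{i,k}(t) \leq v_{M,k}(t)$ for every $i \in \Ical$, we conclude $\Cabk \leq \liminf_{t\to\infty} v_{i,k}(t) \leq \limsup_{t\to\infty} v_{i,k}(t) \leq \Cabk$, that is, $\lim_{t\to\infty} v_{i,k}(t) = \Cabk$ for all $i \in \Ical$, as claimed.
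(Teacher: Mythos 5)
Your proposal is correct and takes essentially the same approach as the paper: reduce via Lemma \ref{Lm limit 1} to showing $\lim_{t\to\infty} v_{m,k}(t) \geq \Cabk$, dispatch the case $v_{m,k}(T)\geq \Cabk$ with Lemma \ref{Lm Upper Bound}, and otherwise use positivity (Lemma \ref{Lm Non-negativity}) plus monotonicity on $(0,\Cabk)$ to get convergence to some $\beta$, ruled out below $\Cabk$ by a linear-growth contradiction. Your constant $\eta = b_k\, v_{m,k}(t_0)^{q-1}\bigl(\tfrac{a_k}{b_k} - \beta^{r-q}\bigr)$ is in fact slightly cleaner than the paper's, which bounds $v_{m,k}^{q-1}$ below by $(\alpha/2)^{q-1}$ and contains the minor slip of writing $\tfrac{a_k}{b_k}-\alpha$ where $\tfrac{a_k}{b_k}-\alpha^{r-q}$ is meant.
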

\begin{proof}
	To prove this, it is sufficient to show by Lemma \ref{Lm limit 1} that $\lim_{t\to\infty} v_{m,k}(t) \geq \Cabk$.
	If $v_{m,k}(T) \geq \Cabk$ for some $T\geq 0$, then by Lemma \ref{Lm Upper Bound} and Lemma \ref{Lm limit 1}, we have
	\begin{align*}
		\Cabk \leq \lim_{t\to\infty} v_{m,k}(t) \leq \lim_{t\to\infty} v_{M,k}(t)\leq \Cabk.
	\end{align*}
	Thus, in this case, we have the desired result.
	We now consider the case where $v_{m,k}(t) < \Cabk$ for all $t\geq0$.
	Since $v_{m,k}(0) \geq 0$ and the initial configuration is not trivial, it follows from Lemma \ref{Lm Non-negativity} that $0< v_{m,k}(t)<\Cabk$ for all $t\geq0$, which implies $v_{m,k}$ is strictly increasing on $(0,\infty)$. Thus, $v_{m,k}$ converges to $\alpha \leq \Cabk$. 
	We claim that $\alpha = \Cabk$.
	To obtain a contradiction, suppose that $\alpha <\Cabk$.
	For any $\epsilon>0$, there exists $T>0$ such that 
	\begin{align*}
		\alpha-\epsilon < v_{m,k}(t) < \alpha,
	\end{align*}
	for $t\geq T$.
	By taking $\epsilon = \alpha/2$, we obtain
	\begin{align*}
		v_{m,k}^\prime(t) 
		\geq 
		b_k \varphi_q(v_{m,k}) \left( \frac{a_k}{b_k} - \left|\varphi_{r-q+1} (v_{m,k})\right| \right)
		\geq
		b_k \left( \frac{\alpha}{2} \right)^{q-1} \left( \frac{a_k}{b_k} - \alpha \right)>0,
		\quad t\geq T,
	\end{align*}
	which implies
	\begin{align*}
		v_{m,k} (t) - v_{m,k}(T) = \int_T^t v_{m,k}^\prime (s) ds
		\geq
		b_k \left( \frac{\alpha}{2} \right)^{q-1} \left( \frac{a_k}{b_k} - \alpha \right) (t-T),
		\quad t\geq T.
	\end{align*}	
	This leads to a contradiction with $\lim_{t\to\infty}v_{m,k}(t) = \alpha \leq \Cabk$.
	Hence, we conclude that $\lim_{t\to\infty} v_{m,k}(t) = \Cabk$.
\end{proof}

\begin{theorem}\label{Th Consensus 2}
	For a fixed $k\in\Kcal$,
	if $v_{i,k}(0)$ is non-positive and non-zero, then the system \MSys{} tends to a consensus and $\lim_{t\to\infty} v_{i,k} (t) = -\Cabk$ for all $i\in\Ical$.
\end{theorem}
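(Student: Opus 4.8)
The plan is to reduce everything to Theorem \ref{Th Consensus 1} by exploiting the odd (sign-reversal) symmetry of the system \MSys{}, the same device already used to obtain Lemma \ref{Lm Non-negativity}(ii) and the lower bound in Lemma \ref{Lm limit 1}. Concretely, I would first confirm once more that whenever $(x_{i,k}, v_{i,k})$ solves \MSys{}, so does $(-x_{i,k}, -v_{i,k})$. The point is that the communication weight $\psi(\|x_j - x_i\|_2)$ is unchanged under $x \mapsto -x$, while the operator $\DpL$ and the nonlinearities $\varphi_q$ and $\varphi_r$ are all odd (for $\varphi_\gamma$ because $|-s|^{\gamma-2}(-s) = -|s|^{\gamma-2}s$). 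Hence every term on the right-hand side of \eqref{Main_eq(2)} flips sign, matching the sign flip of $\frac{d}{dt}(-v_{i,k})$ on the left.

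Granting this, I would set $\Wv_{i,k} := -v_{i,k}$ and $\Hx_{i,k} := -x_{i,k}$, so that $(\Hx, \Wv)$ is again a solution of \MSys{}. Since $v_{i,k}(0)$ is non-positive and non-zero with respect to $i$, the initial datum $\Wv_{i,k}(0) = -v_{i,k}(0)$ is non-negative and non-zero. Theorem \ref{Th Consensus 1} therefore applies verbatim to $(\Hx, \Wv)$ and gives $\lim_{t\to\infty} \Wv_{i,k}(t) = \Cabk$ for every $i\in\Ical$.

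Translating back via $v_{i,k} = -\Wv_{i,k}$ yields $\lim_{t\to\infty} v_{i,k}(t) = -\Cabk$ for all $i\in\Ical$, which is the asserted limit. Because each agent's $k$-th velocity component converges to the single common value $-\Cabk$, the differences $v_{i,k}(t) - v_{j,k}(t)$ tend to $0$, so the system reaches a consensus in this component.

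I do not anticipate any genuine obstacle: the whole proof is a one-line corollary of Theorem \ref{Th Consensus 1} once the symmetry is in place. The only items that deserve an explicit check are that each nonlinear ingredient ($\DpL$, $\varphi_q$, $\varphi_r$) is truly odd and that $\psi$ depends on $x$ only through the distances $\|x_j - x_i\|_2$, and both are immediate from their definitions.
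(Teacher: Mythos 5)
Your proposal is correct and takes essentially the same route as the paper: the paper's proof likewise observes that $(-x_i,-v_i)$ solves the system and applies Theorem \ref{Th Consensus 1} to conclude $\lim_{t\to\infty}(-v_{i,k}(t)) = \Cabk$. Your additional explicit verification that $\DpL$, $\varphi_q$, $\varphi_r$ are odd and that $\psi$ depends only on the distances $\|x_j - x_i\|_2$ merely fills in details the paper leaves as ``clear.''
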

\begin{proof}
	Since it is clear that $-x_i$ and $-v_i$ are also solutions to \MSys{}, it follows from Theorem \ref{Th Consensus 1} that $\lim_{t\to \infty} - v_i(t) = \Cabk$. 
	Hence, we have the desired result.	
\end{proof}

Note that in Theorem \ref{[Thm] Norm Convergence to Cab}, we obtained a result for the convergence of $\norm{v_i}{2}$, whereas in the above two theorems, we obtained a convergence result for each element of $v_i$. Therefore, by Theorems \ref{Th Consensus 1} and \ref{Th Consensus 2}, we can control the direction of $v_i$ by $a_k$ and $b_k$.

We can now show that system \MSys{} has flocking for the non-negative and non-zero initial velocity configuration. 
It is sufficint to show that $\max_{i,j\in\Ical}\norm{x_j - x_i}{2}$ is uniformly bounded with respect to $t$.
Hence, we first discuss estimates for $v_{m,k}$ and $v_{M,k}$ to prove it.

\begin{proposition}\label{[Lm] Estimate of v_Mk}
	For a fixed $k\in\Kcal$, if there exists $T\geq 0$ such that $v_{M,k}(t) > \Cabk$ for all $t\geq T$, then we have
	\begin{align*}
		\left| v_{M,k}(t) - \Cabk \right|
		\leq 
		\left| v_{M,k}(T) - \Cabk \right|
		\exp\left( 
		- b_k (r-q) \left(\frac{a_k}{b_k}\right)^{\frac{q-1}{r-q}} \xi_0 (t-T) 
		\right), 
		\quad t\geq T,
	\end{align*}
	where 
	\begin{align*}
  		\xi_0 := \left\{\begin{array}{ll}
  			\left(\frac{a_k}{b_k}\right)^{\frac{r-q-1}{r-q}},&\text{ if } r-q-1\geq 0,\\
  			\left(v_{M,k}(T) \right)^{r-q-1}, & \text{ if } r-q-1<0.
  		\end{array}\right.
	\end{align*}
\end{proposition}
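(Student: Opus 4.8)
The plan is to mirror the argument of Proposition~\ref{[Prop] Estimate for v_M}, adapted to the scalar quantity $v_{M,k}$ and to the vector-type friction. Throughout I write $C:=\Cabk$ and recall that $\frac{a_k}{b_k}=C^{r-q}$. Since $v_{M,k}(t)>C>0$ for all $t\geq T$, on this interval $\varphi_q(v_{M,k})=v_{M,k}^{q-1}$ and $|\varphi_{r-q+1}(v_{M,k})|=v_{M,k}^{r-q}$, so the uniform-boundedness inequality~\eqref{unifbd ineq} (which already incorporates $\DpL v_{M,k}\leq 0$) becomes the scalar differential inequality
\begin{align*}
	v_{M,k}^\prime(t) \leq -b_k\, v_{M,k}(t)^{q-1}\left( v_{M,k}(t)^{r-q}-C^{r-q}\right), \quad t\geq T.
\end{align*}
In particular $v_{M,k}$ is strictly decreasing on $[T,\infty)$, so $v_{M,k}(t)\leq v_{M,k}(T)$ there.

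First I would linearize the right-hand side. Applying the mean value theorem to the map $s\mapsto s^{r-q}$ on the interval $[C,\,v_{M,k}(t)]$ produces, for each $t\geq T$, some $\xi(t)\in\big(C,\,v_{M,k}(t)\big)$ with
\begin{align*}
	v_{M,k}(t)^{r-q}-C^{r-q} = (r-q)\,\xi(t)^{r-q-1}\left( v_{M,k}(t)-C\right).
\end{align*}
Next I would bound $\xi(t)^{r-q-1}$ from below by $\xi_0$ through a case split on the sign of $r-q-1$: if $r-q-1\geq 0$ the map $s\mapsto s^{r-q-1}$ is nondecreasing, so $\xi(t)^{r-q-1}\geq C^{r-q-1}=\left(\frac{a_k}{b_k}\right)^{\frac{r-q-1}{r-q}}$; if $r-q-1<0$ it is decreasing, and since $\xi(t)<v_{M,k}(t)\leq v_{M,k}(T)$ by the monotonicity just noted, we get $\xi(t)^{r-q-1}\geq v_{M,k}(T)^{r-q-1}$. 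In either case $\xi(t)^{r-q-1}\geq\xi_0$. Combining this with $v_{M,k}(t)^{q-1}\geq C^{q-1}=\left(\frac{a_k}{b_k}\right)^{\frac{q-1}{r-q}}$ (valid since $v_{M,k}>C$ and $q>1$) collapses the inequality into the linear form
\begin{align*}
	\frac{d}{dt}\left( v_{M,k}(t)-C\right) \leq -b_k(r-q)\left(\frac{a_k}{b_k}\right)^{\frac{q-1}{r-q}}\xi_0\,\left( v_{M,k}(t)-C\right), \quad t\geq T.
\end{align*}

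Finally I would integrate this differential inequality using the integrating factor $\exp\big(b_k(r-q)(a_k/b_k)^{(q-1)/(r-q)}\xi_0\,t\big)$, i.e.\ a Gr\"onwall argument, which yields the claimed exponential bound once one observes that $v_{M,k}(t)-C=|v_{M,k}(t)-C|$ on $[T,\infty)$. I expect the only delicate point to be justifying these manipulations for $v_{M,k}$, which is merely Lipschitz and differentiable only almost everywhere; however, by the conventions on $v_{M,k}^\prime$ fixed in Section~\ref{Section: pre}, the differential inequality holds in the appropriate one-sided and almost-everywhere sense, and the fundamental theorem of calculus for the Lipschitz function $v_{M,k}$ legitimizes the integration. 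The case analysis defining $\xi_0$ and the use of strict monotonicity to pin down $\xi(t)\leq v_{M,k}(T)$ in the regime $r-q-1<0$ are the steps requiring the most care, but neither presents a genuine obstacle since the argument is structurally identical to that of Proposition~\ref{[Prop] Estimate for v_M}.
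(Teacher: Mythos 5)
Your proposal is correct and follows essentially the same argument as the paper's proof: drop the nonpositive $\DpL v_{M,k}$ term, linearize $v_{M,k}^{r-q}-C^{r-q}$ via the mean value theorem with the same case split on the sign of $r-q-1$ defining $\xi_0$, bound $v_{M,k}^{q-1}$ below by $C^{q-1}=\left(\frac{a_k}{b_k}\right)^{\frac{q-1}{r-q}}$, and integrate the resulting linear differential inequality. The only (immaterial) difference is that you run the Gr\"onwall argument directly on $v_{M,k}-C$, while the paper works with $\frac{1}{2}\frac{d}{dt}\left(v_{M,k}-C\right)^2$ and takes a square root at the end, yielding the identical decay rate.
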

\begin{proof}
	We first denote $\Cabk$ briefly by $C$. 
	Since we assume that $v_{M,k}(t) > C$ for all $t\geq T$, we have
	\begin{align*}
		\frac{1}{2} \frac{d}{dt}(v_{M,k}(t) - C)^2 
		=& v_{M,k}^\prime(t) (v_{M,k}(t) - C)\\
		=& \MDdpl v_{M,k} (t) (v_{M,k}(t) - C) + \left(a_k v_{M,k}^{q-1}(t) - b_k v_{M,k}^{r-1}(t) \right) (v_{M,k}(t) - C).
	\end{align*}
	Since $\MDdpl v_{M,k} (t) (v_{M,k}(t) - C)\leq 0$, $t\geq T$, we obtain
	\begin{align*}
		\frac{1}{2} \frac{d}{dt}(v_{M,k}(t) - C)^2 
		\leq &
		- b_k v_{M,k}^{q-1}(t) \left( v_{M,k}^{r-q}(t) - C^{r-q} \right)(v_{M,k}(t) - C)<0,
		\quad t \geq T.
	\end{align*}
	Moreover, since $v_{M,k}(t)$ is strictly decreasing on $(T,\infty)$ by \eqref{unifbd ineq}, we also obtain $C < v_{M,k}(t) <v_{M,k}(T)$ for all $t\geq T$.
	For each $t\geq T$, by applying the mean value theorem to the term $v_{M,k}^{r-q}(t) - C^{r-q}$, there exists $\xi(t) \in (C,v_{M,k}(t))\subset (C,v_{M,k}(T))$ such that
	\begin{align*}
		\frac{v_{M,k}^{r-q}(t) - C^{r-q}}{v_{M,k}(t) - C} = (r-q) \left( \xi(t) \right)^{r-q-1}
		>(r-q) \xi_0.
	\end{align*}
	Thus, the following ordinary differential inequality is obtained:
	\begin{align}
		\frac{1}{2} \frac{d}{dt}(v_{M,k}(t) - C)^2 
		\leq &
		- b_k  (r-q) \xi_0 v_{M,k}^{q-1}(t)    (v_{M,k}(t) - C)^2 \nonumber \\
		< &
		- b_k  (r-q) \xi_0 C^{q-1}   (v_{M,k}(t) - C)^2, \label{prop estimate 1 pf 010}
		\quad t\geq T.
	\end{align}	
	Finally, by solving \eqref{prop estimate 1 pf 010}, we have
	$\left(v_{M,k}(t) - C\right)^2 
	\leq 
	\left(v_{M,k}(T) - C\right)^2 
	\exp\left( 
	-2 b_k (r-q) C^{q-1} \xi_0 (t-T) 
	\right) 
	$, $t\geq T$, which completes the proof.
\end{proof}

\begin{proposition}\label{[Lm] Estimate of v_mk}
	For a fixed $k\in\Kcal$, if there exists $T\geq 0$ such that $0 < v_{m,k}(t) < \Cabk$ for all $t\geq T$, then we have
	\begin{align*}
		\left| v_{m,k} (t) - \Cabk \right| 
		\leq 
		\left| v_{m,k} (T) - \Cabk \right| 
		\exp \Bigg(- b_k (r-q)  (v_{m,k}(T))^{q-1} \xi_1 (t- T) \Bigg), 
		\quad t\geq T,
	\end{align*}
	where 
	\begin{align*}
  		\xi_1 := \left\{ \begin{array}{ll}
  			\left(v_{m,k}(T) \right)^{r-q-1},&\text{ if }r-q-1\geq 0,\\
  			\left(\frac{a_k}{b_k}\right)^{\frac{r-q-1}{r-q}},&\text{ if }r-q-1<0.
  		\end{array}\right.
	\end{align*}
\end{proposition}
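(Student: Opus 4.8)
The plan is to mirror the argument of Proposition~\ref{[Lm] Estimate of v_Mk}, but now tracking two sign changes: the deviation $v_{m,k}-C$ is \emph{negative} (where I abbreviate $C:=\Cabk$), and $v_{m,k}$ is \emph{increasing} rather than decreasing. First I would differentiate the squared deviation, writing $\frac{1}{2}\frac{d}{dt}(v_{m,k}(t)-C)^2 = v_{m,k}'(t)(v_{m,k}(t)-C)$ and substituting \eqref{Main_eq(2)}. Because $\DpL v_{m,k}\geq 0$ while $v_{m,k}-C<0$ on $[T,\infty)$, the $p$-Laplacian contribution $\DpL v_{m,k}\,(v_{m,k}-C)$ is $\leq 0$ and may be discarded, leaving only the Rayleigh-friction part.

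Next, using $v_{m,k}>0$ I would rewrite the friction term as $a_k v_{m,k}^{q-1} - b_k v_{m,k}^{r-1} = b_k v_{m,k}^{q-1}\big(C^{r-q}-v_{m,k}^{r-q}\big)$. Since $0<v_{m,k}<C$ forces $C^{r-q}-v_{m,k}^{r-q}>0$, its product with $(v_{m,k}-C)<0$ is negative, and I obtain $\frac{1}{2}\frac{d}{dt}(v_{m,k}-C)^2 \leq -b_k v_{m,k}^{q-1}\big(C^{r-q}-v_{m,k}^{r-q}\big)(C-v_{m,k})$. Applying the mean value theorem to $x\mapsto x^{r-q}$ then gives $C^{r-q}-v_{m,k}^{r-q}=(r-q)\xi^{r-q-1}(C-v_{m,k})$ for some $\xi(t)\in(v_{m,k}(t),C)$.

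The two one-sided bounds that must be chosen carefully are those for $\xi^{r-q-1}$ and $v_{m,k}^{q-1}$, and this is precisely where the minimum case departs from Proposition~\ref{[Lm] Estimate of v_Mk}. From \eqref{Unifbd Ineq 2} together with $0<v_{m,k}<C$ one checks $v_{m,k}'>0$, so $v_{m,k}$ is strictly increasing on $(T,\infty)$ and hence $v_{m,k}(T)<v_{m,k}(t)<\xi(t)<C$. Consequently the lower bound $\xi^{r-q-1}\geq\xi_1$ uses $(v_{m,k}(T))^{r-q-1}$ when $r-q-1\geq 0$ and $C^{r-q-1}=(a_k/b_k)^{(r-q-1)/(r-q)}$ when $r-q-1<0$, which is exactly the reverse of the case split for $\xi_0$, since here $\xi$ lies below $C$ rather than above it. Likewise, because $q>1$ and $v_{m,k}$ is increasing, $v_{m,k}^{q-1}\geq(v_{m,k}(T))^{q-1}$, explaining why the exponential rate carries the prefactor $(v_{m,k}(T))^{q-1}$ instead of $C^{q-1}$.

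Combining these estimates yields the linear differential inequality $\frac{d}{dt}(v_{m,k}-C)^2 \leq -2b_k(r-q)(v_{m,k}(T))^{q-1}\xi_1\,(v_{m,k}-C)^2$; separation of variables, integration from $T$ to $t$, and taking square roots then deliver the claimed bound. The only genuine obstacle is the bookkeeping of signs and the correct selection of the one-sided bounds for $\xi^{r-q-1}$ and $v_{m,k}^{q-1}$ under the monotonicity $v_{m,k}'>0$; once these are pinned down, the computation is structurally identical to the upper-bound proposition, so I would simply remark on the parallel and record the sign-sensitive steps.
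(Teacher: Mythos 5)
Your proposal is correct and follows essentially the same route as the paper, which proves this proposition by exactly the argument you describe: discard the sign-favorable $p$-Laplacian term, factor the friction as $b_k v_{m,k}^{q-1}\bigl(\tfrac{a_k}{b_k} - v_{m,k}^{r-q}\bigr)$, apply the mean value theorem with $\xi(t)\in(v_{m,k}(t),C)$, and use the strict monotonicity of $v_{m,k}$ from \eqref{Unifbd Ineq 2} to justify the reversed case split in $\xi_1$ and the prefactor $(v_{m,k}(T))^{q-1}$ before integrating the resulting linear differential inequality. Your sign bookkeeping and both one-sided bounds are exactly the ones the paper relies on (it cites the method of Proposition \ref{[Lm] Estimate of v_Mk}), so nothing further is needed.
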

\begin{proof}
	Since $ 0< v_{m,k}(t) < C := \Cabk$, $t\geq T$,
	$v_{m,k}$ is strictly increasing on $(T,\infty)$ from \eqref{Unifbd Ineq 2}.
	Applying the same method as in the proof of Theorem \ref{[Lm] Estimate of v_Mk},
	we have
	\begin{align*}
		\frac{1}{2} \frac{d}{dt}(v_{m,k}(t) - C)^2
		< &
		- b_k (r-q) \xi_1 (v_{m,k}(T))^{q-1}  (v_{m,k}(t) - C)^2,
		\quad t \geq T,
	\end{align*}	
	which implies
	\begin{align*}
		(v_{m,k}(t) - C)^2 
		\leq
		(v_{m,k}(T) - C)^2
		\exp\left(
			- 2 b_k (r-q)(v_{m,k}(T))^{q-1}  \xi_1  (t-T)
		\right),
	\end{align*}
	for all $t\geq T$.
\end{proof}

\begin{theorem}\label{Th Uniform Boundedness of X}
	For a fixed $k\in\Kcal$, if $v_{i,k}(0)$ is non-negative and non-zero, then the system \MSys{} has asymptotic flocking.
\end{theorem}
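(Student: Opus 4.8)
The plan is to observe that velocity alignment is already in hand and to reduce the entire statement to a single integrability estimate. Since $v_{i,k}(0)$ is non-negative and non-zero, Theorem~\ref{Th Consensus 1} gives $\lim_{t\to\infty} v_{i,k}(t) = \Cabk$ for every $i\in\Ical$, so in particular $v_{M,k}(t) - v_{m,k}(t)\to 0$ and the relative velocities vanish. What remains for asymptotic flocking is the uniform boundedness of the group diameter. Writing $x_{j,k}(t) - x_{i,k}(t) = \big(x_{j,k}(0) - x_{i,k}(0)\big) + \int_0^t \big(v_{j,k}(s) - v_{i,k}(s)\big)\,ds$ and using $|v_{j,k}(s) - v_{i,k}(s)| \leq v_{M,k}(s) - v_{m,k}(s)$, I would reduce the problem to proving $\int_0^\infty \big(v_{M,k}(s) - v_{m,k}(s)\big)\,ds < \infty$; carrying out the same estimate for each coordinate then bounds $\max_{i,j\in\Ical}\norm{x_j(t) - x_i(t)}{2}$ uniformly in $t$.

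The core of the argument is to upgrade the mere convergence $v_{M,k} - v_{m,k}\to 0$ to \emph{exponential} decay, which is integrable. Set $C := \Cabk$ and apply the trichotomy of Proposition~\ref{Prop Trichotomy}: there exists $T\geq 0$ such that exactly one of (i) $v_{M,k}(t)\leq C$, (ii) $v_{m,k}(t)\geq C$, or (iii) $v_{m,k}(t) < C < v_{M,k}(t)$ holds for all $t\geq T$. In case (iii) both decay estimates apply — Proposition~\ref{[Lm] Estimate of v_Mk} controls $|v_{M,k}(t) - C|$ and Proposition~\ref{[Lm] Estimate of v_mk} controls $|v_{m,k}(t) - C|$, the latter being legitimate because Lemma~\ref{Lm Non-negativity} guarantees $v_{m,k}(t) > 0$ — so $v_{M,k}(t) - v_{m,k}(t) = (v_{M,k}(t) - C) + (C - v_{m,k}(t))$ decays exponentially. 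In case (i) I would use $v_{M,k}(t)\leq C$ to dominate the whole gap by a single decaying term, $v_{M,k}(t) - v_{m,k}(t) \leq C - v_{m,k}(t)$, and invoke Proposition~\ref{[Lm] Estimate of v_mk}; case (ii) is symmetric, using $v_{m,k}(t)\geq C$ together with Proposition~\ref{[Lm] Estimate of v_Mk}. On the compact interval $[0,T]$ the difference is bounded by Proposition~\ref{Prop Uniformly Bounded for v}, so the integral over $[0,\infty)$ converges.

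The main obstacle I anticipate is the bookkeeping in the boundary cases (i) and (ii), where only one of the two decay estimates is directly available: one must genuinely exploit the sign of the non-decaying quantity relative to $C$ so that $v_{M,k} - v_{m,k}$ is dominated by a single exponentially small term, rather than attempting to estimate both sides. A secondary technical point is the degenerate sub-case in which $v_{m,k}$ or $v_{M,k}$ actually attains $C$; by Lemma~\ref{Lm Upper Bound} this forces consensus from that time on, so the remaining contribution to the integral vanishes and causes no difficulty. Assembling the three cases yields the exponential, hence integrable, decay of $v_{M,k} - v_{m,k}$, the uniform bound on $\max_{i,j\in\Ical}\norm{x_j - x_i}{2}$, and, together with the velocity alignment from Theorem~\ref{Th Consensus 1}, the asymptotic flocking of \MSys.
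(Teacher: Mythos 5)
Your proposal is correct and follows essentially the same route as the paper's proof: invoke the trichotomy of Proposition~\ref{Prop Trichotomy}, use the exponential decay estimates of Propositions~\ref{[Lm] Estimate of v_Mk} and~\ref{[Lm] Estimate of v_mk} (with positivity of $v_{m,k}$ from Lemma~\ref{Lm Non-negativity}) in each case, and integrate the velocity differences to bound the position diameter. Your treatment is in fact slightly more careful than the paper's in two spots --- you explicitly dispose of the degenerate sub-case where $v_{m,k}$ or $v_{M,k}$ attains $\Cabk$ via Lemma~\ref{Lm Upper Bound}, and you explicitly cite Theorem~\ref{Th Consensus 1} for the velocity-alignment half of asymptotic flocking --- but these are refinements of, not departures from, the paper's argument.
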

\begin{proof}
	We perform simple calculations to arrive at the identity:
	\begin{align*}
		\frac{d}{dt} \left( \sumI{i,j} (x_{ik} - x_{jk})^2 \right)^{\frac{1}{2}}
		\leq &
		\left( \sumI{i,j} (v_{ik} - v_{jk})^2 \right)^{\frac{1}{2}}.
	\end{align*}	
	By integrating from $t_0$ to $t$, we have
	\begin{align}\label{Th UBdd of X Pf 010}
		\left( \sumI{i,j} (x_{ik}(t) - x_{jk}(t))^2 \right)^{\frac{1}{2}}
		\leq
		\left( \sumI{i,j} (x_i(t_0) - x_j(t_0))^2 \right)^{\frac{1}{2}}
		+ \int_{t_0}^{t}
		\left( \sumI{i,j} (v_{ik}(s) - v_{jk}(s))^2 \right)^{\frac{1}{2}} ds.
	\end{align}
	By Proposition \ref{Prop Trichotomy}, there exists $T\geq 0$ that allows only one of the following:
	\begin{align*}
	\textup{(i) } v_{M,k}(t) \leq \Cabk,~ t\geq T, 
	\quad
	\textup{(ii) } v_{m,k}(t) \geq \Cabk,~ t\geq T,
	\quad
	\textup{(iii) } v_{m,k}(t) < \Cabk < v_{M,k}(t),~ t\geq T.
	\end{align*}
	We first consider Case (i).
	Since $v_{i,k}(0)$ is non-negative and non-zero, by Lemma \ref{Lm Non-negativity}, $v_{m,k}(t) >0$ for all $t > 0$.
	Thus, we have $0<v_{m,k}(t)\leq v_{M,k}(t)\leq \Cabk(:=C)$ for $t > T$, which implies 
	\begin{align*}
		(v_{i,k}(t) - C)^2 \leq (v_{m,k}(t) -C)^2, \quad t > T,
	\end{align*}
	for all $i\in \Ical$.
	Therefore, by Proposition \ref{[Lm] Estimate of v_mk}, we have
	\begin{align*}
		\left( \sumI{i,j} (v_{ik}(t) - v_{jk}(t))^2 \right)^{\frac{1}{2}}
		\leq &
		\left(
		\sumI{i,j} 2 \left( (v_{ik}(t) - C)^2 + (C - v_{jk}(t))^2 \right)
		\right)^{\frac{1}{2}}\\
		& \leq
		\left(
		4 N^2 (v_{m,k}(t) -C)^2
		\right)^{\frac{1}{2}}\\
		&\leq
		2N C_{1,T}
		\exp\left(- C_{2,T} (t- T)\right),
	\end{align*}
	where $C_{1,T} = \left| v_{m,k} (T) - C \right|$, and $C_{2,T}= b_k (r-q)  (v_{m,k}(T))^{q-1} \xi_1$.
	By replacing $t_0$ with $T$ in \eqref{Th UBdd of X Pf 010}, we obtain
	\begin{align*}
		\int_{T}^{t}
		\left( \sumI{i,j} (v_{ik}(s) - v_{jk}(s))^2 \right)^{\frac{1}{2}} ds
		<&
		\frac{\left( 2N^2 C_{1,T} \right)^{\frac{1}{2}}}{C_{2,T}}, 
		\quad t \geq T.
	\end{align*}
	Hence, $\sumI{i,j}(x_{ik}(t) - x_{jk}(t))^2 $ is uniformly bounded.
	Therefore, the system \MSys{} has asymptotic flocking.
	Similarly, Case (ii) can be proved by Proposition \ref{[Lm] Estimate of v_Mk}.
	Finally, for (iii), since $v_{m,k}(t) >0$ for all $t>0$, by Propositions \ref{[Lm] Estimate of v_Mk} and \ref{[Lm] Estimate of v_mk}, we have the desired result.
\end{proof}

Next, we add the assumption that $p<2$ and discuss initial configurations to induce flocking in finite time.
We also provide initial distribution conditions for $v_{i,k}$ to converge to $\Cabk$.
As previously mentioned, if there exists a finite flocking time, flocking occurs; hence, we focus on the existence of a finite flocking time.

\label{---}
\begin{theorem}\label{Thm Finite Extinction Time}
	For $1<p<2$, $p<q<r$, and a fixed $k\in\Kcal$, let $a_k$ be sufficiently small.
	If there exists $t_0 >0$ ($t_0$ may be infinite) such that
	\begin{align}\label{Thm FET Eq 010}
		(v_{M,k} (0) - v_{m,k} (0))^{2-p}
  		<
  		\left\{ \begin{array}{ll}
  			2(2-p)\int_0^{t_0} \min_{i,j\in\Ical} \CSweightT dt , & \text{if } q \geq 2,\\
  			(2-p) \int_0^{t_0} 2 
  			\left(
  			\min_{i,j\in\Ical} \CSweightT - a_k C_1 B_k^{q-p} 
  			\right) 
  			dt & \mbox{if } 1<q<2,
  		\end{array}\right.
	\end{align}
	then there exists $T\geq0$ such that $v_{M,k} (t) - v_{m,k} (t) = 0$ for all $t\geq T$, where $B_k:=B_{m,k} + B_{M,k}$. 
	Here the definitions of $B_{m,k}$ and $B_{M,k}$ is in Proposition \ref{Prop Uniformly Bounded for v}.
\end{theorem}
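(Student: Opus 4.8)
The plan is to track the scalar velocity diameter $w(t):=v_{M,k}(t)-v_{m,k}(t)\ge 0$ and to show that, under \eqref{Thm FET Eq 010}, it vanishes after a finite time. Writing $\pmin(t):=\min_{i,j\in\Ical}\psi(\Lnorm{x_j(t)-x_i(t)})$, I would first recall from Section~\ref{Section: pre} that $v_{M,k}$ and $v_{m,k}$ are Lipschitz, hence differentiable almost everywhere, so that it suffices to differentiate $w$ on the intervals where the maximizing and minimizing indices are frozen and then to recover $w$ from $w'$ through the integral identities recorded there. On such an interval I would split $w'=v_{M,k}'-v_{m,k}'$ into the $p$-Laplacian contribution $\DpL v_{M,k}-\DpL v_{m,k}$, the $a_k$-friction contribution $a_k(\varphi_q(v_{M,k})-\varphi_q(v_{m,k}))$, and the $b_k$-friction contribution $-b_k(\varphi_r(v_{M,k})-\varphi_r(v_{m,k}))$. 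Since $\varphi_r$ is nondecreasing and $v_{M,k}\ge v_{m,k}$, the last contribution is $\le 0$ and is discarded. For the $p$-Laplacian contribution I would argue exactly as in the proof of Theorem~\ref{(Thm) Consensus for Rayleigh friction}: the summand of $\DpL v_{M,k}$ indexed by the global minimizer is at most $-\pmin\, w^{p-1}$ and every other summand is $\le 0$, and symmetrically for $\DpL v_{m,k}$, so that $\DpL v_{M,k}-\DpL v_{m,k}\le -2\pmin\, w^{p-1}$.

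It then remains to estimate the $a_k$-friction term, and this is precisely where the dichotomy in \eqref{Thm FET Eq 010} originates. For $q\ge 2$ the map $\varphi_q$ is $C^1$ on the bounded velocity range, so by the mean value theorem together with the uniform bound of Proposition~\ref{Prop Uniformly Bounded for v} I would obtain $\varphi_q(v_{M,k})-\varphi_q(v_{m,k})\le (q-1)B_k^{q-2}\,w$, a bound linear in $w$. For $1<q<2$ the derivative of $\varphi_q$ blows up at the origin, so instead I would apply Lemma~\ref{L-inequalities}, inequality \eqref{L-ineq-eq010}, in dimension one with the choice $\delta=2-p$; then $1-\delta=p-1$ and the remaining exponent is $q-2+\delta=q-p>0$, yielding $|\varphi_q(v_{M,k})-\varphi_q(v_{m,k})|\le C_1\,w^{p-1}(|v_{M,k}|+|v_{m,k}|)^{q-p}\le C_1 B_k^{q-p}\,w^{p-1}$ after a second use of the uniform bound. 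In either case the master estimate becomes a Bernoulli-type differential inequality in $w$ with leading term $-2\pmin\,w^{p-1}$.

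The decisive step is to linearize through the substitution $z:=w^{2-p}$, which is legitimate because $0<2-p<1$; then $z'=(2-p)w^{1-p}w'$ converts both versions into a \emph{linear} differential inequality for $z$. When $q\ge 2$ this reads $z'\le -2(2-p)\pmin+(2-p)(q-1)B_k^{q-2}a_k\,z$, and an integrating factor absorbs the $a_k$-term into a weight $e^{-(2-p)(q-1)B_k^{q-2}a_k s}$ in front of $\pmin(s)$ under the integral; since that weighted integral tends to $\int_0^{t_0}\pmin$ as $a_k\to 0$, the first case of \eqref{Thm FET Eq 010} ensures, for $a_k$ small, that the resulting upper bound for $z$ reaches $0$ at some finite $T\le t_0$. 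When $1<q<2$ the same substitution gives directly $z'\le -2(2-p)(\pmin-a_kC_1B_k^{q-p})$, whose integrated form reproduces the threshold in the second case of \eqref{Thm FET Eq 010}, so again the bound for $z$ attains $0$ in finite time. In both regimes $z(T)=0$ forces $w(T)=0$.

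Finally I would verify that $w$ remains $0$: at any time where $w=0$ every surviving term on the right-hand side of the master inequality carries a factor $w^{p-1}$ (or $w$), hence $w'\le 0$ there and $w$ cannot re-enter $(0,\infty)$; equivalently, once all $v_{i,k}$ coincide the coupling $\DpL v_{i,k}$ vanishes and each $v_{i,k}$ solves the same scalar equation $\dot v=a_k\varphi_q(v)-b_k\varphi_r(v)$ from a common datum, so the agents stay aligned. This yields $v_{M,k}(t)-v_{m,k}(t)=0$ for all $t\ge T$, as claimed. I expect the main obstacle to be the range $1<q<2$: there $\varphi_q$ is merely H\"older continuous, the linear-in-$w$ estimate is unavailable, and one must extract precisely the $w^{p-1}$ scaling from Lemma~\ref{L-inequalities} via $\delta=2-p$ so that the friction term matches the power supplied by the $p$-Laplacian; a secondary nuisance is the limit $a_k\to 0$ in the case $q\ge 2$, which is qualitative and leaves the admissible size of $a_k$ unquantified.
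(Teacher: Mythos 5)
Your proposal is correct and takes essentially the same route as the paper's proof: the same decomposition of $(v_{M,k}-v_{m,k})'$ with the bound $\DpL v_{M,k}-\DpL v_{m,k}\le -2\pmin\,(v_{M,k}-v_{m,k})^{p-1}$, discarding the nonpositive $b_k$-term, estimating the $a_k$-term via Lemma~\ref{L-inequalities} together with the uniform bound of Proposition~\ref{Prop Uniformly Bounded for v}, and integrating the resulting Bernoulli-type inequality with the same case split at $q=2$ and the same qualitative small-$a_k$ limit. The only cosmetic deviations are that you choose $\delta=2-p$ where the paper takes $\delta=2-q$ and then factors $w^{q-1}=w^{p-1}w^{q-p}$ (both land on the same bound $a_kC_1B_k^{q-p}w^{p-1}$), you use the mean value theorem instead of $\delta=0$ when $q\ge 2$, and you make explicit the persistence of $w\equiv 0$ after time $T$, which the paper leaves implicit.
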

\begin{proof}
	We first calculate
	\begin{align}
  		\frac{d}{dt}(v_{M,k} - v_{m,k}) 
  		=& 
  		\DpL v_{M,k} - \DpL v_{m,k} + a_k ( \varphi_q(v_M)  - \varphi_q(v_m)) 
  		- b_k( \varphi_r(v_M)  - \varphi_r(v_m)) \nonumber \\
  		\leq &
  		-2 \pmin ( v_{M,k} - v_{m,k} )^{p-1}
  		+ a_k C_1 (v_{M,k} - v_{m,k})^{1-\delta} ( |v_{M,k}| + |v_{m,k}| )^{q-2 + \delta}. \label{Thm FET Pf 010}
	\end{align}
	The proof falls naturally into two cases: $q\geq2$ and $1<q<2$.
	
	We first consider the case where $q\geq 2$.
	By Proposition \ref{Prop Uniformly Bounded for v}, we have $|v_{M,k}(t)| + |v_{m,k}(t)| \leq B_{M,k} + B_{m,k} = B_k$ for all $t\geq 0$; therefore, we have
	\begin{align*}
  		\frac{d}{dt}(v_{M,k} - v_{m,k}) 
  		\leq & 
  		-2 \pmin ( v_{M,k} - v_{m,k} )^{p-1}
  		+ a_k C_1 B_k^{q-2} (v_{M,k} - v_{m,k}).
	\end{align*}
	Here, we take $\delta =0$.
	Then, $v_{i,k}$ satisfies 
	\begin{align*}
  		(v_{M,k}(t) - v_{m,k}(t))^{2-p} e^{- a_k C_1 B_k^{q-2} t} 
  		-(v_{M,k}(0) - v_{m,k}(0))^{2-p}
  		\leq 
  		-2 (2-p) \int_0^t \pmin (s) e^{- a_k C_1 B_k^{q-2} s}  ds.
	\end{align*}
	for all $t\geq 0$.
	Since $\int_0^{t_0} \pmin (s) e^{- a_k C_1 B_k^{q-2} s}  ds \to \int_0^{t_0} \pmin (s)  ds$ as $a_k \to 0$, we can take a small enough $a_k$ such that
	\begin{align*}
  		(v_{M,k}(0) - v_{m,k}(0))^{2-p} 
  		< 
  		\int_0^{t_0} \pmin (s) e^{- a_k C_1 B_k^{q-2} s}  ds.
	\end{align*}
	Thus, there exists $T>0$ such that 
	\begin{align*}
  		(v_{M,k}(0) - v_{m,k}(0))^{2-p} = 2 (2-p) \int_0^T \pmin (s) e^{- a_k C_1 B_k^{q-2} s}  ds,
	\end{align*}
	which implies $v_{M,k} (t) - v_{m,k} (t) = 0$ for all $t\geq T$.
	
	When $1<q<2$, by taking $\delta = 2-q$ in \eqref{Thm FET Pf 010}, we have
	\begin{align*}
  		\frac{d}{dt}(v_{M,k} - v_{m,k}) 
  		\leq &
  		-2 \pmin ( v_{M,k} - v_{m,k} )^{p-1}
  		+ a_k C_1 (v_{M,k} - v_{m,k})^{q-1}\\
  		=&
  		-2 \pmin ( v_{M,k} - v_{m,k} )^{p-1}
  		+ a_k C_1 (v_{M,k} - v_{m,k})^{p-1} (v_{M,k} - v_{m,k})^{q-p}.
	\end{align*}
	Since $p<q$, $(v_{M,k} - v_{m,k})^{q-p}$ is uniformly bounded by $B_k^{p-q}$. 
	Therefore, we obtain 
	\begin{align*}
  		\frac{d}{dt}(v_{M,k} - v_{m,k}) 
  		\leq &
  		- \left( 2 \pmin - a_k C_1 B_k^{q-p} \right) (v_{M,k} - v_{m,k})^{p-1},
	\end{align*}
	which implies
	\begin{align*}
		(v_{M,k} (t) - v_{m,k} (t) )^{2-p} 
		\leq &
		(v_{M,k} (0) - v_{m,k} (0) )^{2-p} 
		- (2-p) \int_0^t 2 \pmin (s) - a_k C_1 B_k^{q-p} ds,
		\quad t\geq 0.
	\end{align*}
	Thus, we have the desired result by \eqref{Thm FET Eq 010}.	
\end{proof}

We note that by Lemma \ref{Lm Non-negativity}, we can obtain the trichotomy result:
for an arbitrary initial configuration,
\begin{itemize}
	\item[(i)] there exists $T\geq 0$ such that $v_{i,k}(t)>0$ for all $i\in\Ical$ and $t\geq T$,
	\item[(ii)] there exists $T\geq 0$ such that $v_{i,k}(t)<0$ for all $i\in\Ical$ and $t\geq T$,
	\item[(iii)] $v_{m,k}(t) \leq 0 \leq v_{M,k}(t)$ for all $t\geq 0$.
\end{itemize} 
Moreover, by Theorems \ref{Th Consensus 1} and \ref{Th Consensus 2}, for cases (i) and (ii), we obtain $\lim_{t\to\infty} v_{i,k} (t) = \Cabk$ and $\lim_{t\to\infty} v_{i,k} (t) = -\Cabk$, respectively.
For case (iii), if we assume all the conditions in Theorem \ref{Thm Finite Extinction Time}, then Theorem \ref{Thm Finite Extinction Time} implies $\lim_{t\to\infty} v_{i,k} (t) = 0$.
Thus, the convergence value of $v_{i,k}$ is determined by the given initial configuration to be one of the three values $-\Cabk$, $0$, $\Cabk$ above.
In particular, we are interested in the initial configuration that allows a convergence to $\Cabk$.
By Theorem \ref{Th Consensus 1}, for a fixed $k\in\Kcal$, if $v_{i,k}(0)$ is non-negative and non-zero, then $\lim_{t\to\infty} v_{i,k} = \Cabk$ for all $i$. Hence, in the next result, we focus on the case where $v_{m,k}(0) < v_{M,k}(0)$.

\label{---}
\begin{proposition}\label{[Prop] Convergence to Cabk}\label{Lm Estimations of v_M and v_m}
	For $1<p<2$, $p<q<r$, and a fixed $k\in\Kcal$ satisfying $v_{m,k}(0)<0<v_{M,k}(0)$, 
	let $a_k$ be small enough and there exist $t_0>0$ ($t_0$ may be infinite) such that
	\begin{align}\label{Prop Conv Cabk 010}
  		|v_{m,k}(0)|^{2-p}
  		<
  		\left\{ \begin{array}{ll}
  			(2-p)\int_0^{t_0} \min_{i,j\in\Ical} \CSweightT dt , & \text{if } q \geq 2,\\
  			(2-p) \int_0^{t_0} \left( \min_{i,j\in\Ical} \CSweightT - a_k B_{m,k}^{q-p} \right) dt & \mbox{if } 1<q<2.
  		\end{array}\right.
	\end{align}
	If $|v_{m,k}(0)|$ is small enough, then $\lim_{t\to\infty} v_{i,k} (t) = \Cabk$ for all $i\in\Ical$.
\end{proposition}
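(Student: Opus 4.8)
The plan is to mirror the contradiction argument of Proposition~\ref{[Prop] Positive Convergence}: assume $v_{i,k}(t)\not\to\Cabk$ and then force $v_{m,k}$ back up to the value $0$ at a finite time while $v_{M,k}$ is still strictly positive, so that Lemma~\ref{Lm Non-negativity} and Theorem~\ref{Th Consensus 1} finish the job. For the fixed $k$ with $v_{m,k}(0)<0<v_{M,k}(0)$, the sign trichotomy coming from Lemma~\ref{Lm Non-negativity} rules out configuration (i) (that $v_{i,k}(t)>0$ eventually), since by Theorem~\ref{Th Consensus 1} that case already gives $v_{i,k}\to\Cabk$. Hence $v_{m,k}$ stays negative on a maximal initial interval, on which I set $w:=-v_{m,k}>0$ and write $\pmin(t):=\min_{i,j\in\Ical}\cswt$.

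First I would derive a Bernoulli-type inequality on any interval where $v_{m,k}<0<v_{M,k}$. There $\DpL v_{m,k}\ge \pmin\,(v_{M,k}-v_{m,k})^{p-1}\ge \pmin\,(-v_{m,k})^{p-1}$, while $-b_k\varphi_r(v_{m,k})=b_k|v_{m,k}|^{r-1}\ge 0$ and $a_k\varphi_q(v_{m,k})=-a_k(-v_{m,k})^{q-1}$. Dropping the nonnegative $r$-term yields $w'\le-\pmin\,w^{p-1}+a_k w^{q-1}$. Using the uniform bound $w=|v_{m,k}|\le|B_{m,k}|$ from Proposition~\ref{Prop Uniformly Bounded for v}, I would estimate $a_k w^{q-1}\le a_k|B_{m,k}|^{q-2}\,w$ when $q\ge2$ and $a_k w^{q-1}=a_k w^{q-p}w^{p-1}\le a_k|B_{m,k}|^{q-p}w^{p-1}$ when $1<q<2$ (here the hypothesis $q>p$ is essential). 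The substitution $u:=w^{2-p}$ linearizes the inequality; integrating it (with the integrating factor $e^{-(2-p)a_k|B_{m,k}|^{q-2}t}$ in the first case, directly in the second) and using that $\int_0^{t_0}\pmin e^{-(2-p)a_k|B_{m,k}|^{q-2}s}\,ds\to\int_0^{t_0}\pmin\,ds$ as $a_k\to0$, I recover for small $a_k$ exactly the integral condition~\eqref{Prop Conv Cabk 010}; this produces a finite $T^*\le t_0$ at which the right-hand side of the integrated inequality vanishes, so that $w$ is forced down to $0$ by time $T^*$.

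The last step is to close the loop, and this is where I expect the only real obstacle. The inequality above is licensed only while $v_{M,k}>0$, so I must make sure $v_{m,k}$ reaches $0$ before $v_{M,k}$ can leave the positive region (otherwise the trajectory could instead collapse to total extinction, $v_{i,k}\to0$). Since $v_{M,k}(0)>0$ is a fixed quantity and $v_{M,k}$ is Lipschitz with a constant that stays bounded as $|v_{m,k}(0)|\to0$, there is a fixed $\tau_0>0$ with $v_{M,k}(t)>0$ on $[0,\tau_0)$; meanwhile $u(0)=|v_{m,k}(0)|^{2-p}\to0$ forces $T^*\to0$, so choosing $|v_{m,k}(0)|$ small enough gives $T^*<\tau_0$. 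Then the first zero $T_0\le T^*<\tau_0$ of $v_{m,k}$ satisfies $v_{m,k}(T_0)=0<v_{M,k}(T_0)$, so $v_{i,k}(T_0)$ is non-negative and non-zero; Lemma~\ref{Lm Non-negativity} then gives $v_{m,k}(t)>0$ for $t>T_0$, i.e. sign-case (i), and Theorem~\ref{Th Consensus 1} yields $v_{i,k}\to\Cabk$, contradicting the assumption and proving the claim. The crux is thus the coupling in this final step: the integral condition only guarantees that $v_{m,k}$ \emph{would} reach $0$ by $T^*$, and it is precisely the extra smallness of $|v_{m,k}(0)|$ that guarantees this happens inside the time window on which $v_{M,k}>0$, so that hitting $0$ lands the system in the non-negative regime rather than in extinction.
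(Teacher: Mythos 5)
Your proposal is correct and follows essentially the same route as the paper's proof: the same Bernoulli-type inequality for $w=-v_{m,k}$ on the interval where $v_{m,k}<0<v_{M,k}$ (dropping the $b_k$-term, splitting the cases $q\geq 2$ and $1<q<2$ via the uniform bound from Proposition~\ref{Prop Uniformly Bounded for v}), the same smallness of $a_k$ to recover \eqref{Prop Conv Cabk 010}, the same observation that the hitting time $T^*\to 0$ as $|v_{m,k}(0)|\to 0$, and the same conclusion via Lemma~\ref{Lm Non-negativity} and Theorem~\ref{Th Consensus 1}. If anything, your Lipschitz argument producing a fixed window $\tau_0>0$ on which $v_{M,k}>0$ uniformly in $|v_{m,k}(0)|$ makes explicit a point the paper leaves implicit when it asserts $T'<T$ for small $|v_{m,k}(0)|$.
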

\begin{proof}
	For a given $k$, 
	since $v_{M,k}(0) >0$, there exists $T>0$ (maybe infinity) such that $v_{M,k}(t) >0$ for all $t\in[0, T)$.
	We now suppose that $v_{m,k}(t) < 0$ for all $t\in[0, T)$.
	Then, we have 
	\begin{align*}
  		( - v_{m,k})^\prime 
  		\leq &
  		- \pmin ( v_{M,k} - v_{m,k} )^{p-1} + a_k \varphi_q(-v_{m,k}) - b_k \varphi_r(-v_{m,k})\\
  		\leq &
  		- \pmin ( - v_{m,k} )^{p-1} + a_k ( - v_{m,k})^{q-1}
	\end{align*}
	on $[0,T)$, implying
	\begin{itemize}
		\item if $q\geq 2$, then 
			\begin{align}\label{Prop Conv Cabk Pf 010}
  				(-v_{m,k}(t))^{2-p}
  				\leq
  				(-v_{m,k}(0))^{2-p} - (2-p) \int_0^t \pmin (s) e^{-a_k B_{m,k}^{q-2} s}ds,
			\end{align}
		\item if $1<q<2$, then 
			\begin{align}\label{Prop Conv Cabk Pf 020}
  				(-v_{m,k}(t))^{2-p}
  				\leq
  				(-v_{m,k}(0))^{2-p} - (2-p) \int_0^t \left( \pmin (s) - a_k B_{m,k}^{q-p}\right) ds,
			\end{align}
	\end{itemize}
	for all $t\in[0,T)$.
	By applying a similar argument to the proof of Theorem \ref{Thm Finite Extinction Time}, 
	there exists $T^\prime >0$ such that the right-hand side of \eqref{Prop Conv Cabk Pf 010} or \eqref{Prop Conv Cabk Pf 020} is zero at $t=T'$ for each case.
	We note that $T'\to 0$ as $|v_{m,k}(0)|\to 0$.
	Therefore, we can choose a sufficiently small $|v_{m,k}(0)|$ such that $T'<T$, which is a contraction. 
	Thus, there exists $T_0<T$ such that $v_{M,k}(T_0) >0$ and $v_{m,k}(T_0)\geq 0$ for all $k\in\Kcal$. Thus, by Theorem \ref{Th Consensus 1}, $v_{i,k} (t) \to \Cabk$ for all $i\in\Ical$.
\end{proof}

\section{Applications}

In this section, we show that the regular communication weight $\psi_R$ satisfies \eqref{Thm: Eq 010}, \eqref{Thm FET Eq 010}, and \eqref{Prop Conv Cabk 010}.
We first present an estimate for the regular communication weight $\psi_R$, which has been proved in \cite[Lemma 4.3]{kim2020complete}.

\begin{lemma}\label{Lm Regular Communication Weight}\label{Lm RCW}
	For $p>1$ and $\beta>0$, the regular communication weight $\psi_{R}$ satisfies
	\begin{align*}
  		\Rcwt =\left( \frac{1}{1 + \Lnorm{x_j (t) - x_i (t)}^2}\right)^{\frac{\beta}{2}}
  		> 
  		\left[ \psi_R^{- \frac{1}{\beta}}\left( \norm{x_j (0) - x_i (0)}{2} \right) + 
  		\int_0^t \norm{v_j (s) - v_i(s)}{2}ds
  		\right]^{-\beta}
	\end{align*}
	for all $i,j\in\Ical$ and $t\geq0$.
\end{lemma}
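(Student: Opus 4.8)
The plan is to recast the claimed lower bound on $\Rcwt$ as a Gronwall-type upper bound on the auxiliary quantity
\begin{align*}
	\phi(t) := \sqrt{1 + \Lnorm{x_j(t) - x_i(t)}^2}.
\end{align*}
By the explicit form of $\psi_R$ one checks $\phi(t) = \Rcwt^{-1/\beta}$, and likewise $\phi(0) = \Rcwtzero^{-1/\beta}$, so that the right-hand side of the asserted inequality is exactly $\big[\phi(0) + \int_0^t \Lnorm{v_j(s)-v_i(s)}\,ds\big]^{-\beta}$. Since $y \mapsto y^{-\beta}$ is strictly decreasing on $(0,\infty)$, applying it to both sides shows that the statement of the lemma is \emph{equivalent} to
\begin{align*}
	\phi(t) < \phi(0) + \int_0^t \Lnorm{v_j(s) - v_i(s)}\, ds ,
\end{align*}
and it therefore suffices to establish this upper bound on $\phi$.

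First I would differentiate $\phi$. The key structural observation is that $\phi(t) \geq 1 > 0$ for every $t$, so $\phi$ is smooth and never vanishes even if two agents collide (i.e.\ $x_j(t) = x_i(t)$); this is precisely why I work with $\phi$ rather than with the bare distance $\Lnorm{x_j - x_i}$, whose time derivative is singular at collisions. Squaring and using $\frac{d}{dt}(x_j - x_i) = v_j - v_i$ from \eqref{Main_eq(1)} yields
\begin{align*}
	\phi(t)\,\phi'(t) = \inpro{x_j(t) - x_i(t)}{v_j(t) - v_i(t)}.
\end{align*}

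Next I would bound the right-hand side by Cauchy--Schwarz and divide by $\phi(t)$:
\begin{align*}
	\phi'(t)
	= \frac{\inpro{x_j - x_i}{v_j - v_i}}{\phi(t)}
	\leq \frac{\Lnorm{x_j - x_i}}{\phi(t)}\,\Lnorm{v_j - v_i}
	< \Lnorm{v_j(t) - v_i(t)},
\end{align*}
where the last, strict inequality uses $\Lnorm{x_j - x_i} < \sqrt{1 + \Lnorm{x_j - x_i}^2} = \phi(t)$. Integrating this pointwise bound over $[0,t]$ produces $\phi(t) < \phi(0) + \int_0^t \Lnorm{v_j - v_i}\, ds$ for $t>0$, which is exactly the reduction established above, and undoing the substitution recovers the stated inequality on $\Rcwt$.

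The computation is essentially routine once $\phi$ is introduced; the only points requiring genuine care are the behavior at collisions and the strictness of the final inequality. Both are resolved cleanly by passing to $\phi$: it is bounded away from zero, so the differentiation is valid globally, and the factor $\Lnorm{x_j - x_i}/\phi$ is strictly below $1$ for every $t$, which forces $\phi' < \Lnorm{v_j - v_i}$ pointwise and hence a strict inequality after integration over any interval of positive length. I therefore expect no substantive obstacle beyond verifying that this factor never attains the value $1$, which is immediate.
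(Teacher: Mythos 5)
Your proof is, at its core, the same argument as the paper's: the authors differentiate the weight along the flow to get the Bernoulli-type differential inequality $\frac{d}{dt}\Rcwt \geq -\beta\left(\Rcwt\right)^{1+\frac{1}{\beta}}\Lnorm{v_j(t) - v_i(t)}$ and integrate it, while your substitution $\phi(t) = \Rcwt^{-1/\beta}$ is exactly the change of variables that linearizes that separable inequality. So the content is identical; your parametrization is arguably tidier, in particular in how it handles collisions, since $\phi \geq 1$ makes the differentiation manifestly valid everywhere.

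There is, however, one flawed step: your claimed \emph{pointwise strict} inequality $\phi'(t) < \Lnorm{v_j(t) - v_i(t)}$. The factor $\Lnorm{x_j - x_i}/\phi < 1$ forces strictness only when $\Lnorm{v_j(t) - v_i(t)} > 0$; if $v_j(t) = v_i(t)$, then $\phi'(t) = \inpro{x_j - x_i}{v_j - v_i}/\phi = 0 = \Lnorm{v_j(t) - v_i(t)}$, and the inequality degenerates to equality. This is not a corner case in this paper: after the finite flocking time established in Theorem \ref{(Thm) Consensus for Rayleigh friction} and Theorem \ref{Thm Finite Extinction Time} one has $v_j \equiv v_i$, and for $i = j$, or for initial data already in consensus, $\phi$ is constant, so that
\begin{align*}
	\phi(t) = \phi(0) + \int_0^t \Lnorm{v_j(s) - v_i(s)}\, ds
\end{align*}
holds with equality and the strict inequality of the lemma actually fails. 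What your integration honestly yields is strictness whenever $v_j - v_i$ is nonzero on a positive-measure subset of $[0,t]$, and ``$\geq$'' in general --- which is precisely what the paper's own proof concludes (its final display has $\geq$, despite the strict $>$ in the lemma statement, a blemish your write-up inherits rather than resolves). The fix is minor: either weaken the conclusion to $\geq$, or state the positive-measure condition under which strictness holds; but the claim that passing to $\phi$ ``cleanly'' settles strictness pointwise is incorrect as written.
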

\begin{proof}
	The proof of this lemma is obtained by direct calculations.
	Hence, we only provide a sketch of the proof. 
	For more details, see \cite{kim2020complete}.
	We first calculate
	\begin{align*}
  		\frac{d}{dt} \Rcwt
  		\geq
  		-\beta \left( \Rcwt \right)^{1+\frac{1}{\beta}}\norm{v_j(t) - v_i(t)}{2} ,
	\end{align*}
	which implies
	\begin{align*}
  		\Rcwt \geq 
  		\left[ \psi_R^{- \frac{1}{\beta}}\left( \norm{x_j (0) - x_i (0)}{2} \right) + 
  		\int_0^t \norm{v_j (s) - v_i(s)}{2}ds
  		\right]^{-\beta}
	\end{align*}
	for all $i,j\in\Ical$.
\end{proof}

We consider two cases, $0<\beta\leq 1$ and $\beta>1$.

\begin{proposition}\label{[Prop] Regular Communication Weight 1}
	For $1<p<2$, $p<q<r$, and $k\in\Kcal$, suppose that $a_k$ is sufficiently small.
	If $0< \beta \leq 1$, then 
	the regular communication weight $\psi_R$ satisfies the equalities \eqref{Thm: Eq 010}, \eqref{Thm FET Eq 010}, and \eqref{Prop Conv Cabk 010}.
\end{proposition}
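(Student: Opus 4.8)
The plan is to reduce all three required inequalities to one quantitative fact: a sharp lower bound on $\int_0^{t_0}\min_{i,j\in\Ical}\Rcwt\,dt$, which I abbreviate $\int_0^{t_0}\pmin(s)\,ds$ following Section \ref{Section: Norm type}. The engine is Lemma \ref{Lm RCW} combined with the uniform boundedness of the velocities. By Proposition \ref{Prop Uniformly Bounded for v} (and the analogous bound established at the start of Section \ref{Section: Norm type} for the norm-type model) there is a constant $V>0$ with $\norm{v_j(s)-v_i(s)}{2}\le V$ for all $i,j\in\Ical$ and $s\ge0$, so that $\int_0^t\norm{v_j(s)-v_i(s)}{2}\,ds\le Vt$. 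Substituting this into Lemma \ref{Lm RCW} and putting $C_0:=\max_{i,j\in\Ical}\psi_R^{-\frac{1}{\beta}}(\norm{x_j(0)-x_i(0)}{2})$, I obtain the clean estimate
\begin{align}\label{eq:RCWlb}
\pmin(t) > (C_0 + Vt)^{-\beta}, \quad t\ge0 .
\end{align}

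First I would settle the cases with no subtracted constant, namely \eqref{Thm: Eq 010} and the $q\ge2$ branches of \eqref{Thm FET Eq 010} and \eqref{Prop Conv Cabk 010}. Integrating \eqref{eq:RCWlb} and using $0<\beta\le1$ shows $\int_0^\infty\pmin(s)\,ds=\infty$: the antiderivative of $(C_0+Vt)^{-\beta}$ is $\frac{(C_0+Vt)^{1-\beta}}{V(1-\beta)}$ when $\beta<1$ and $\frac{1}{V}\log(C_0+Vt)$ when $\beta=1$, both of which diverge as $t\to\infty$. Since the left-hand sides of these three inequalities are finite numbers fixed by the initial data, the divergence of $\int_0^\infty\pmin$ immediately supplies a finite $t_0$ making each inequality hold.

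The hard part will be the $1<q<2$ branches of \eqref{Thm FET Eq 010} and \eqref{Prop Conv Cabk 010}, where the integrand carries a fixed positive penalty, of the form $\pmin(s)-a_kC_1B_k^{q-p}$ (respectively $\pmin(s)-a_kB_{m,k}^{q-p}$). Because $\pmin(t)\to0$ as $t\to\infty$, this integrand is eventually negative, so sending $t_0\to\infty$ is useless; the truncation length $t_0$ must be tuned against $a_k$. Writing $D$ for the penalty constant and combining \eqref{eq:RCWlb} with the explicit antiderivative gives, for $\beta<1$,
\begin{align*}
\int_0^{t_0}\big(\pmin(s)-D\big)\,ds > \frac{(C_0+Vt_0)^{1-\beta}-C_0^{1-\beta}}{V(1-\beta)} - D\,t_0 ,
\end{align*}
and the same with a logarithm when $\beta=1$. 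The right-hand side is a sublinear-minus-linear function of $t_0$; a one-line maximization (the optimum sits at $t_0^\ast\sim D^{-1/\beta}$) shows its maximal value grows like $D^{-(1-\beta)/\beta}$ for $\beta<1$ and like $-\log D$ for $\beta=1$, hence tends to $+\infty$ as $D\to0$.

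To close the argument I would note that $D\to0$ as $a_k\to0$: indeed $\Cabk\to0$, so the uniform bounds $B_k$ and $B_{m,k}$ of Proposition \ref{Prop Uniformly Bounded for v} stay bounded and $D=a_kC_1B_k^{q-p}$ (resp.\ $a_kB_{m,k}^{q-p}$) tends to $0$ with $a_k$. Thus for $a_k$ small enough the supremum over $t_0$ of the displayed net integral exceeds the fixed quantity $(v_{M,k}(0)-v_{m,k}(0))^{2-p}$ (resp.\ $|v_{m,k}(0)|^{2-p}$), and any $t_0$ near $t_0^\ast$ witnesses \eqref{Thm FET Eq 010} (resp.\ \eqref{Prop Conv Cabk 010}). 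The only genuine obstacle is this last balance---divergence of $\int_0^\infty\pmin$ is not by itself sufficient here---so the crux is the elementary estimate that the sublinear growth $t_0^{1-\beta}$ of the weight integral overtakes the linear penalty $D\,t_0$ on an interval whose height blows up as $a_k\to0$. I would record the maximization of $c\,t_0^{1-\beta}-D\,t_0$ either inline or as a short auxiliary lemma, after which the three claims follow by assembling \eqref{eq:RCWlb} with the divergence and optimization estimates.
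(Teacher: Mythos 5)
Your proposal is correct and takes essentially the same route as the paper's own proof: the same substitution of the uniform velocity bound into Lemma \ref{Lm RCW} to obtain $\pmin(t) > \left( C_0 + Vt \right)^{-\beta}$, divergence of $\int_0^\infty \pmin$ to dispatch \eqref{Thm: Eq 010} and the $q\geq 2$ branches, and for $1<q<2$ the same maximization of a sublinear-minus-linear function of $t_0$ whose peak value blows up as the penalty $D = a_k C_1 B_k^{q-p}\to 0$ with $a_k$. The only differences are presentational: you state the optimization rate $D^{-(1-\beta)/\beta}$ (resp.\ $-\log D$ for $\beta=1$) explicitly and carry out the $\beta=1$ case, which the paper merely sketches.
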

\begin{proof}
	Since $|v_{i,k}| \leq B_k$ for $i\in\Ical$ and $k\in\Kcal$, we have 
	\begin{align*}
  		\norm{v_j(t) - v_i(t)}{2} \leq 2 \sumK{k} B_k =: M,
	\end{align*}
	for all $i,j\in\Ical$.
	Therefore, it follows from Lemma \ref{Lm RCW} that 
	\begin{align*}
  		\Rcwt 
  		> 
  		\left[ \psi_R^{- \frac{1}{\beta}}\left( \norm{x_j (0) - x_i (0)}{2} \right) + 
  		M t 
  		\right]^{-\beta}
	\end{align*}
	for all $i,j\in\Ical$ and $t\geq0$.
	Thus, we have
	\begin{align}\label{Prop RCW 1 Pf 010}
  		\min_{i,j\in\Ical} \Rcwt
  		>
  		\min_{i,j\in\Ical}
  		\left[ \psi_R^{- \frac{1}{\beta}}\left( \norm{x_j (0) - x_i (0)}{2} \right) + 
  		M t 
  		\right]^{-\beta}
  		=
  		\left[ \left( \min_{i,j\in\Ical} \psi_R\left( \norm{x_j (0) - x_i (0)}{2} \right) \right)^{- \frac{1}{\beta}} + 
  		M t 
  		\right]^{-\beta}
	\end{align}
	for all $t\geq0$.
	For simplicity, we write $\pmin (t)$ instead of $\min_{i,j\in\Ical} \Rcwt$.
	Let us consider the case where $0<\beta <1$.
	Then, if $q\geq 2$, we have
	\begin{align*}
  		\int_0^t \pmin (s) ds 
  		> \frac{1}{M(1-\beta)} 
  			\left( \pmin^{- \frac{1}{\beta}}(0) + Mt \right)^{1-\beta} 
  		- \frac{1}{M(1-\beta)} 
  			\left( \pmin^{- \frac{1}{\beta}}(0)\right)^{1-\beta} 
  		\to \infty,
  		\text{ as } t \to \infty.
	\end{align*}
	Hence, $\psi_R$ satisfies \eqref{Thm: Eq 010}, \eqref{Thm FET Eq 010}, and \eqref{Prop Conv Cabk 010}.
	In the case where $1<q<2$,
	we have
	\begin{align*}
  		(2-p) \int_0^t 2 \pmin (s) - a_k C_1 B_k^{q-p} ds 
  		> &
  		\frac{2(2-p)}{M(1-\beta)} 
  			\left( \pmin^{- \frac{1}{\beta}}(0) + Mt \right)^{1-\beta} 
  		- \frac{2(2-p)}{M(1-\beta)} 
  			\left( \pmin^{- \frac{1}{\beta}}(0)\right)^{1-\beta} 
  		- (2-p) a_k C_1 B_k^{q-p} t\\
  		> &
  		\frac{2(2-p)}{M(1-\beta)} 
  			\left( Mt \right)^{1-\beta}
  			- \frac{2(2-p)}{M(1-\beta)} 
  			\left( \pmin^{- \frac{1}{\beta}}(0)\right)^{1-\beta} 
  			- (2-p) a_k C_1 B_k^{q-p} t.
	\end{align*}
	Taking 
	\begin{align*}
  		g(t) 
  		=& 
  		\frac{2(2-p)}{M(1-\beta)} 
  			\left( Mt \right)^{1-\beta}
  			- (2-p) a_k C_1 B_k^{q-p} t,
	\end{align*}
	then it is easy to see that $g$ is maximized at $t=t_0 := \left(  \frac{2(2-p)M^{1-\beta}}{M(2-p)a_k C_1 B_k^{q-p}} \right)^{\frac{1}{\beta}}$.
	Since $B_k$ depends on $a_k$, we deduce $t_0\to \infty$ as $a_k \to 0$, which implies
	\begin{align*}
  		g(t_0) 
  		=& 
  		\left(
  		\frac{2(2-p)}{M(1-\beta)} 
  			M^{1-\beta} 
  			- (2-p)a_k C_1 B_k^{q-p} t_0^{\beta}
  		\right) t_0^{1-\beta}\\
  		=&
  		\left(  \frac{2(2-p)M^{1-\beta}}{M} \right)
  		\left(
  		\frac{1}{1-\beta} 
  			- 1
  		\right) t_0^{1-\beta} \to \infty \text{ as } a_k \to 0.
	\end{align*}
	Therefore, we can choose a sufficiently small $a_k$ such that
	\begin{align*}
  		(v_{M,k} (0) - v_{m,k} (0))^{2-p}
  		< &
  		g(t_0) - \frac{2(2-p)}{M(1-\beta)} 
  			\left( \pmin^{- \frac{1}{\beta}}(0)\right)^{1-\beta}\\
  		< &
  		(2-p) \int_0^{t_0} \left( 2 \pmin (s) - a_k C_1 B_k^{q-p} \right) ds.
	\end{align*}
	Hence, $\psi_R$ satisfies \eqref{Thm FET Eq 010} and \eqref{Prop Conv Cabk 010}.
	
	In the case where $\beta=1$, 
	we can obtain the desired result using the arguments, except that only the integral $\int_0^t \pmin$ is
	\begin{align*}
  		\int_0^t \pmin (s) ds 
  		= 
  		\ln \left( \pmin^{-1} (0) +Mt \right) - \ln \left( \pmin^{-1} (0)\right).
	\end{align*}
	Hence, we omit the proof of the case where $\beta=1$.
\end{proof}

\begin{proposition}\label{[Prop] Regular Communication Weight 2}
	For $1<p<2$, $q<p<r$, and $k\in\Kcal$, suppose that $a_k$ is sufficiently small and
	\begin{align}\label{Prop RCW 2 H 010}
  		\frac{2(2-p)}{M(\beta-1)} 
  		\left( \Rcwtzero \right)^{\frac{\beta-1}{\beta}}
  		>
  		(v_{M,k} (0) - v_{m,k} (0))^{2-p}.
	\end{align}
	If $\beta >1$, then the regular communication weight $\psi_R$ satisfies the inequality \eqref{Thm FET Eq 010}.
\end{proposition}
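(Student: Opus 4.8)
The plan is to mirror the argument of Proposition~\ref{[Prop] Regular Communication Weight 1}, with the essential difference that for $\beta>1$ the weight integral $\int_0^\infty\pmin(s)\,ds$ is now \emph{finite}; this is precisely why the extra hypothesis \eqref{Prop RCW 2 H 010} is needed. Since $q<p<2$ forces $1<q<2$, only the second branch of \eqref{Thm FET Eq 010} is in play, and throughout $\pmin(t):=\min_{i,j\in\Ical}\Rcwt$ with $\pmin(0)=\min_{i,j\in\Ical}\psi_R(\Lnorm{x_j(0)-x_i(0)})$ (the quantity written $\Rcwtzero$ in \eqref{Prop RCW 2 H 010}).

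First I would recover the pointwise lower bound of \eqref{Prop RCW 1 Pf 010}. Setting $M:=2\sum_{k\in\Kcal}B_k$, so that $\Lnorm{v_j(t)-v_i(t)}\le M$ for all $i,j\in\Ical$, Lemma~\ref{Lm RCW} followed by minimisation over $i,j$ yields
\[
\pmin(t)>\left[\pmin^{-\frac1\beta}(0)+Mt\right]^{-\beta},\qquad t\ge0 .
\]
The key computation is the explicit integral of this lower bound, which for $\beta>1$ converges:
\[
\int_0^{t}\left[\pmin^{-\frac1\beta}(0)+Ms\right]^{-\beta}ds=\frac{1}{M(\beta-1)}\left[\pmin^{\frac{\beta-1}{\beta}}(0)-\left(\pmin^{-\frac1\beta}(0)+Mt\right)^{1-\beta}\right]\xrightarrow[t\to\infty]{}\frac{\pmin^{\frac{\beta-1}{\beta}}(0)}{M(\beta-1)} .
\]
This limit is exactly the left-hand side of \eqref{Prop RCW 2 H 010} up to the factor $2(2-p)$, which is what dictates the form of the hypothesis.

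With this in hand I would run a two-step choice of constants. Hypothesis \eqref{Prop RCW 2 H 010} provides a strictly positive gap
\[
\Delta:=\frac{2(2-p)}{M(\beta-1)}\,\pmin^{\frac{\beta-1}{\beta}}(0)-(v_{M,k}(0)-v_{m,k}(0))^{2-p}>0 .
\]
Since the convergence above is monotone, I first freeze a finite $t_0$ large enough that $2(2-p)\int_0^{t_0}[\pmin^{-1/\beta}(0)+Ms]^{-\beta}\,ds$ lies within $\Delta/3$ of its limit; combined with the pointwise bound this already gives $2(2-p)\int_0^{t_0}\pmin(s)\,ds>(v_{M,k}(0)-v_{m,k}(0))^{2-p}+\tfrac{2\Delta}{3}$. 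Holding $t_0$ fixed, I then shrink $a_k$ so that $2(2-p)\,a_kC_1B_k^{q-p}\,t_0<\Delta/3$; subtracting this linear correction produces the second branch of \eqref{Thm FET Eq 010}.

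The main obstacle is the reversal of roles relative to the case $\beta\le1$: there one lets $t_0\to\infty$ and the divergent integral overwhelms the subtracted term $a_kC_1B_k^{q-p}t_0$, whereas here the integral is bounded, so $t_0$ must be fixed \emph{before} $a_k$ is driven to zero. A secondary point is that $a_kC_1B_k^{q-p}\to0$ as $a_k\to0$ must still hold even though the exponent $q-p$ is now negative; this is legitimate because the uniform bound $B_k$ of Proposition~\ref{Prop Uniformly Bounded for v} stays bounded below by a positive constant comparable to the initial spread $v_{M,k}(0)-v_{m,k}(0)>0$ as $a_k\to0$, so $B_k^{q-p}$ stays bounded and the correction vanishes with $a_k$.
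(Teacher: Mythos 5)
Your proposal is correct, and its core is identical to the paper's: the pointwise bound $\pmin(t)>\bigl[\pmin^{-1/\beta}(0)+Mt\bigr]^{-\beta}$ obtained from Lemma \ref{Lm RCW} (i.e.\ \eqref{Prop RCW 1 Pf 010}), the explicit integration showing that for $\beta>1$ the integral converges to $\frac{1}{M(\beta-1)}\pmin^{\frac{\beta-1}{\beta}}(0)$, and hypothesis \eqref{Prop RCW 2 H 010} to dominate that finite limit. Two differences are worth recording. First, you read the stated hypothesis $q<p<r$ literally, conclude $1<q<2$, and verify only the second branch of \eqref{Thm FET Eq 010}; the paper's hypothesis is evidently a typo for $p<q<r$ (the assumption in Theorem \ref{Thm Finite Extinction Time} and Proposition \ref{[Prop] Regular Communication Weight 1}), and its proof accordingly also treats $q\geq 2$, where the first branch follows at once from the convergent integral and \eqref{Prop RCW 2 H 010} with no smallness of $a_k$ needed at all — you should append that (trivial) case. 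Second, for $1<q<2$ the paper keeps the correction term inside and maximizes $g(t)=-\frac{2(2-p)}{M(\beta-1)}(Mt)^{1-\beta}-(2-p)a_kC_1B_k^{q-p}t$ at an $a_k$-dependent time $t_0$, showing $g(t_0)\to 0$ as $a_k\to 0$, whereas you freeze a finite $t_0$ capturing all but $\Delta/3$ of the limiting integral and then shrink $a_k$ so the linear correction on $[0,t_0]$ is below $\Delta/3$. The two are logically equivalent, but your order of quantifiers is arguably cleaner: it avoids computing the maximizer and makes explicit the structural point you correctly identify, namely that unlike the divergent case $\beta\leq 1$, here $t_0$ must be fixed before $a_k$ is sent to $0$. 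Your closing remark about $B_k^{q-p}$ is also sound, modulo the paper's own sign slip ($B_k:=B_{m,k}+B_{M,k}$ should read $B_{M,k}-B_{m,k}$ for $|v_{M,k}|+|v_{m,k}|\leq B_k$ to hold): with that reading $B_k$ stays bounded away from $0$ and $\infty$ as $a_k\to 0$, so the correction term indeed vanishes with $a_k$.
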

\begin{proof}
	We first consider the case where $q\geq 2$.
	It follows from \eqref{Prop RCW 1 Pf 010} that
	\begin{align}\label{Prop RCW 2 Pf 010}
  		\int_0^t \pmin (s) ds 
  		>  - \frac{1}{M(\beta - 1)} 
  			\left( 
  			\frac{1}{\pmin^{- \frac{1}{\beta}}(0) + Mt }
  			\right)^{\beta-1} 
  		+ \frac{1}{M(\beta-1)} 
  			\pmin^{\frac{\beta-1}{\beta}}(0).
	\end{align}
 	The right hand side of \eqref{Prop RCW 2 Pf 010} converges to $\frac{1}{M(\beta-1)} \pmin^{\frac{\beta-1}{\beta}}(0)$ as $t \to \infty$.
	Thus, by \eqref{Prop RCW 2 H 010}, we have
	\begin{align*}
  		2(2-p)\int_0^\infty \pmin (s) ds 
  		\geq 
  		\frac{2(2-p)}{M(\beta-1)} \pmin^{\frac{\beta-1}{\beta}}(0)
  		>
  		(v_{M,k} (0) - v_{m,k} (0))^{2-p}.
	\end{align*}

	In the case where $1<q<2$, we have
	\begin{align*}
  		(2-p) \int_0^{t} \left(2 \pmin (s) - a_k C_1 B_k^{q-p} \right) ds
  		> &
  		- \frac{2(2-p)}{M(\beta - 1)} 
  			\left( 
  			\frac{1}{\pmin^{- \frac{1}{\beta}}(0) + Mt }
  			\right)^{\beta-1} 
  		+ \frac{2(2-p)}{M(\beta-1)} 
  			\pmin^{\frac{\beta-1}{\beta}}(0)
  		- (2-p) a_k C_1 B_k^{q-p} t\\
  		>&
  		- \frac{2(2-p)}{M(\beta - 1)} 
  			\left( 
  			\frac{1}{ Mt }
  			\right)^{\beta-1}
  		+ \frac{2(2-p)}{M(\beta-1)} 
  			\pmin^{\frac{\beta-1}{\beta}}(0)
  		- (2-p) a_k C_1 B_k^{q-p} t.
	\end{align*}
	Taking 
	\begin{align*}
  		g(t) = - \frac{2(2-p)}{M(\beta - 1)} 
  			\left( 
  			\frac{1}{ Mt }
  			\right)^{\beta-1}
  			- (2-p) a_k C_1 B_k^{q-p} t,
	\end{align*}
	then the maximizer $t_0$ of $g$ is 
	\begin{align*}
  		t_0
  		=
  			\left(
  			\frac{2(2-p)}{ (2-p) a_k C_1 B_k^{q-p} M^\beta } 
  			\right)^{\frac{1}{\beta}}.
	\end{align*}
	Then, we have 
	\begin{align*}
  		g(t_0) 
  		=
  		-
  		\frac{2(2-p)}{M^\beta} 
  		\left(
  		 \frac{1}{\beta - 1} 
  			+  1
  		\right) 
  		\left(
  			\frac{ (2-p) a_k C_1 B_k^{q-p} M^\beta } {2(2-p)}
  		\right)^{\frac{\beta -1}{\beta}}
  		\longrightarrow 0 \text{ as } a_k \to 0.
	\end{align*}
	Therefore, by \eqref{Prop RCW 2 H 010}, there exists a sufficiently small $a_k$ such that
	\begin{align*}
  		(2-p) \int_0^{t_0} \left( 2 \pmin (s) - a_k C_1 B_k^{q-p} \right) ds
  		> & 
  		g(t_0)
  		+ \frac{2(2-p)}{M(\beta-1)} 
  			\pmin^{\frac{\beta-1}{\beta}}(0) \\
  		> &
  		(v_{M,k} (0) - v_{m,k} (0))^{2-p}.
	\end{align*}
	Thus, we have the desired result.
\end{proof}

\begin{remark}
	We note that if we change \eqref{Prop RCW 2 H 010} into 
	\begin{align*}
  		\frac{2C_m (2-p)}{M(\beta-1)} \pmin^{\frac{\beta-1}{\beta}}(0)
  		>
  		\norm{v_M (0) - v_m (0)}{2} ^{2-p},
  		\quad \text{or} \quad
  		\frac{2-p}{M(\beta-1)} \pmin^{\frac{\beta-1}{\beta}}(0)
  		>
  		|v_{m,k} (0)|^{2-p},
	\end{align*}
	Then, by the argument of the proof in Proposition \ref{[Prop] Regular Communication Weight 2}, we can show that the regular communication weight $\psi_R$ satisfies \eqref{Thm: Eq 010}, or \eqref{Prop Conv Cabk 010}.
\end{remark}


\label{====================================}
\section{Numerical Simulations}\label{Section: Numer Simul}

Several numerical simulations were used to confirm the analytical results of this study; the simulations were based on the Runge-Kutta method to illustrate the main results of the study.
All simulations were run using MATLAB’s ODE solver ‘‘ode89,'' and we always considered 
\begin{align*}
	d= 2   , \quad
	p= 1.5 , \quad
  	q= 2.5 , \quad
  	r= 3.5.  
\end{align*}
In addition, we considered the regular communication weight $\psi_R$ with $\beta = 0.5$ as the function $\psi$ to minimize the initial configuration conditions.
If we consider $\beta >1$, then we must check whether the initial condition \eqref{Prop RCW 2 H 010} is true.

\begin{example}\label{Ex1}
	In this example, we run a simulation of the results of Theorem \ref{(Thm) Consensus for Rayleigh friction} and Proposition \ref{[Prop] Positive Convergence}. 
	We first consider $20$ agents (i.e., $N=20$). The initial configuration is given as Figure \ref{fig1} (a), and the position $x_i$ and velocity $v_i$ are created with uniformly distributed random numbers ranging from $-10$ to $10$.
	We also take $a = 0.1$ and $b=0.05$.
	
	Figure \ref{fig1} (b) is a graph of the trajectories of all agents, and Figure \ref{fig1} (c) shows that $\norm{v_i}{2}$ converges to $\Cab = 2$ around $t=1$ in finite time. 
	
\begin{figure}[thb!]
\centering
	\includegraphics[scale=0.35]{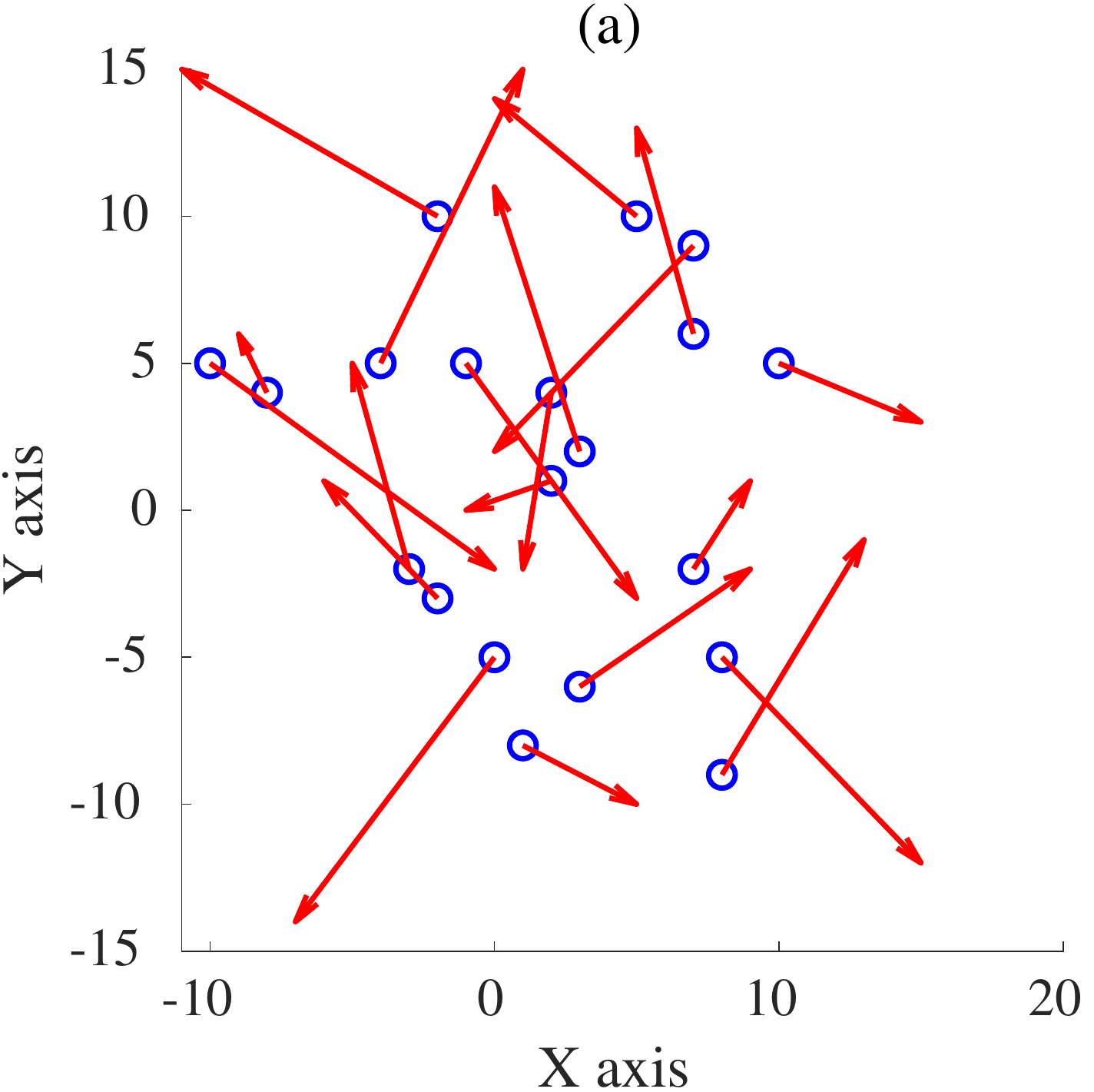}
	\includegraphics[scale=0.35]{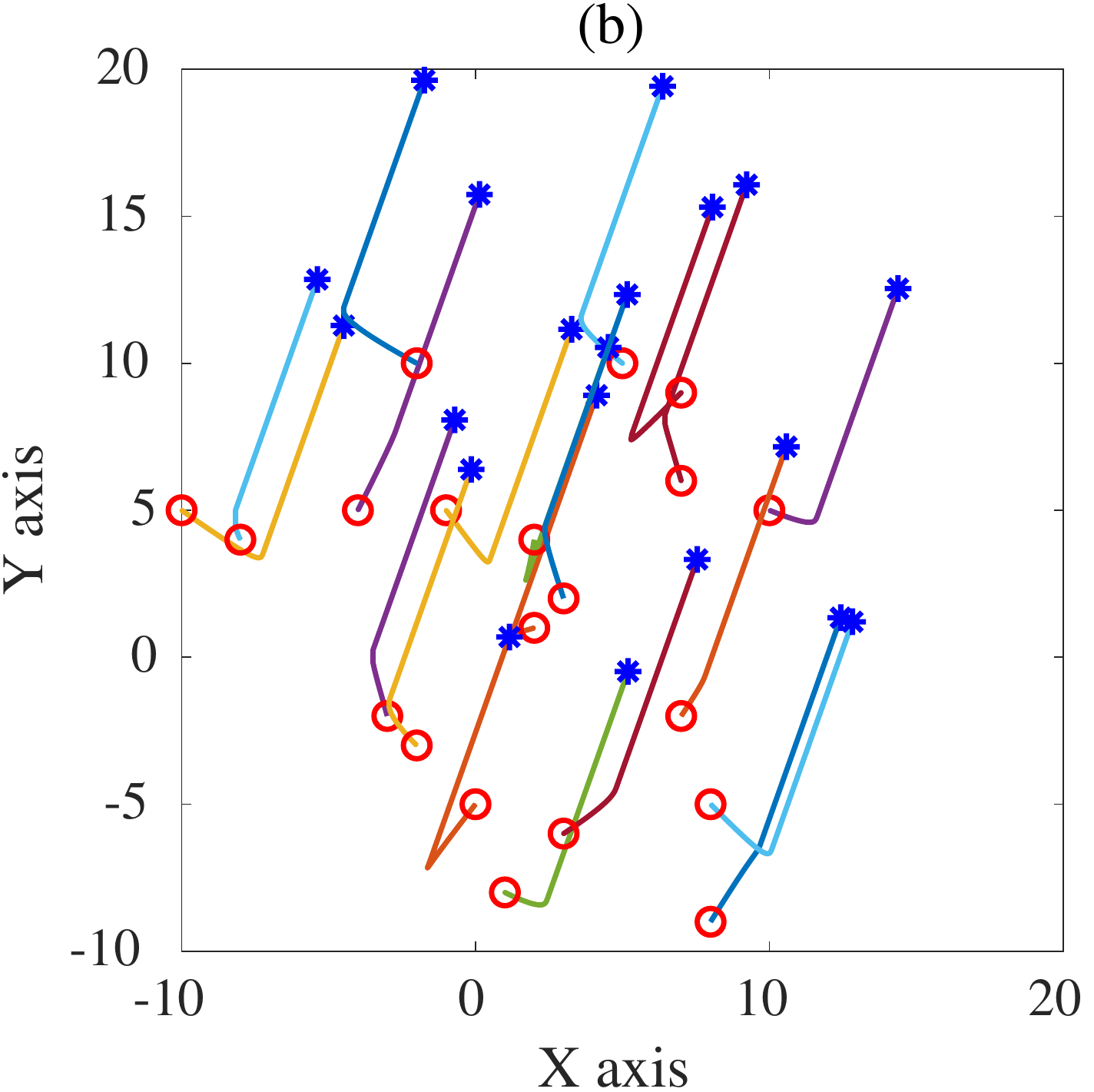}
	\includegraphics[scale=0.35]{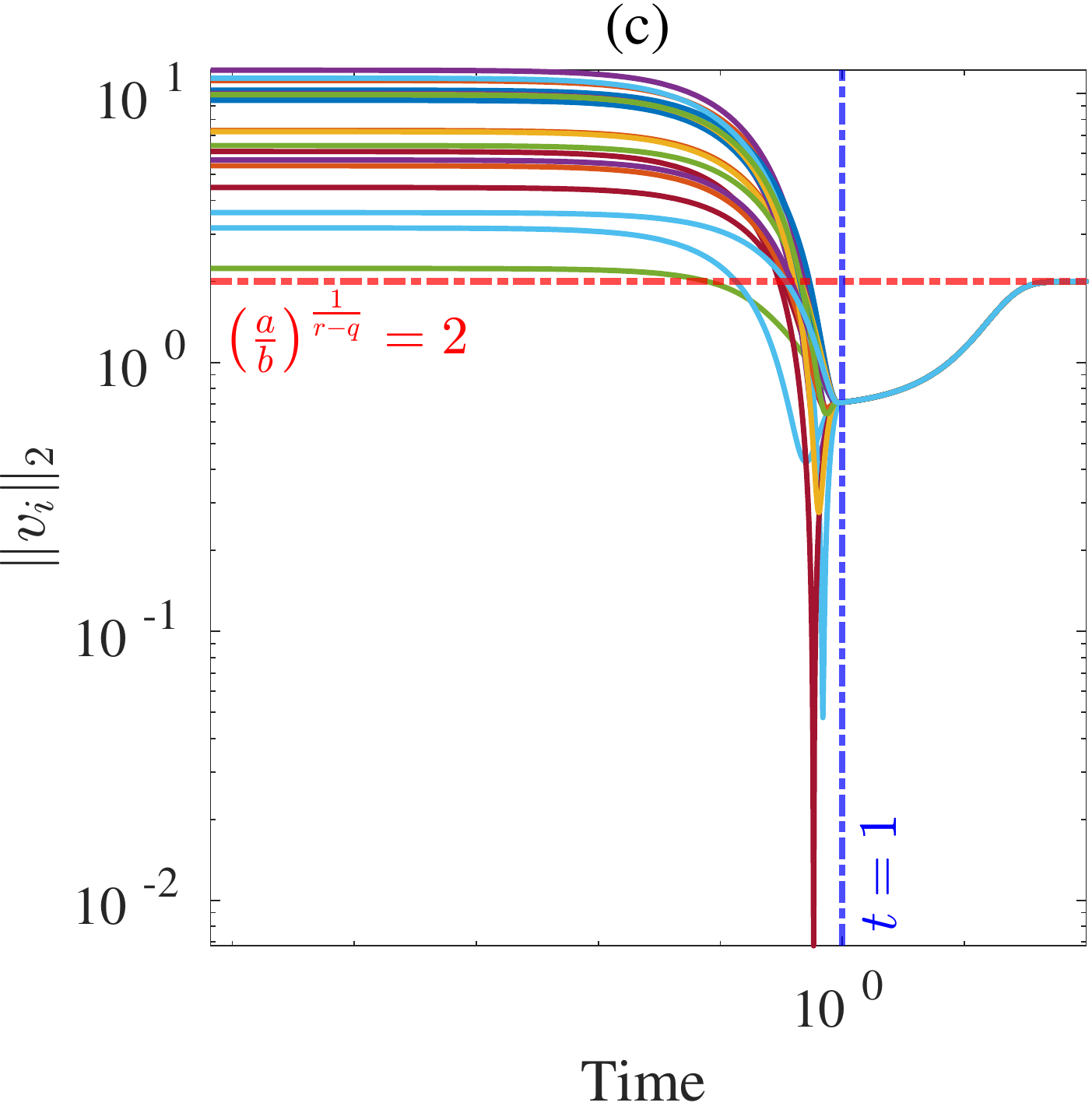}
	\caption{
	(a) is the initial configuration, and the blue circles and red arrows are the agent's initial positions and velocities.
	(b) is the trajectories of all agents. 
	Each agent's initial and final positions are indicated by a red circle and a blue star, respectively.
	(c) is the graph of $\norm{v_i}{2}$ for each $i\in\Ical$, and the red and blue dashed lines are markers for $\Cab = 2$ and $t=1$, respectively.
	}
	\label{fig1}
\end{figure}

	Figure \ref{fig2} shows a simulation of the case where $\norm{v_i(t)}{2}\to 0$ as $t\to\infty$.
	For this case, we simply consider four agents (that is $N=4$), and 
	the initial configuration is given by
	\begin{align*}
		\begin{array}{llll}
			x_1 = (1,0), \quad &x_2 = (0,1), \quad &x_3 = (-1,0), \quad &x_4 = (0,-1),\\
	  		v_1 = (-1,0), \quad &v_2 = (0,-1), \quad &v_3 = (1,0), \quad &v_4 = (0,1).
		\end{array}
	\end{align*}
	As shown in Figure \ref{fig2} (c), we can see that $\norm{v}{2}$ converges to zero where $\norm{v}{2}:= \left(\sum_{i\in\Ical} \norm{v_i}{2}^2\right)^{\frac{1}{2}}$. Although it looks like $\norm{v(t)}{2}=0$ around $t=1$ in Figure \ref{fig2} (c), it is not numerically zero. 
	That is, in this case, we cannot determine whether $\norm{v(t)}{2}=0$ in finite time this simulation. 
	
\begin{figure}[thb!]
	\includegraphics[scale=0.35]{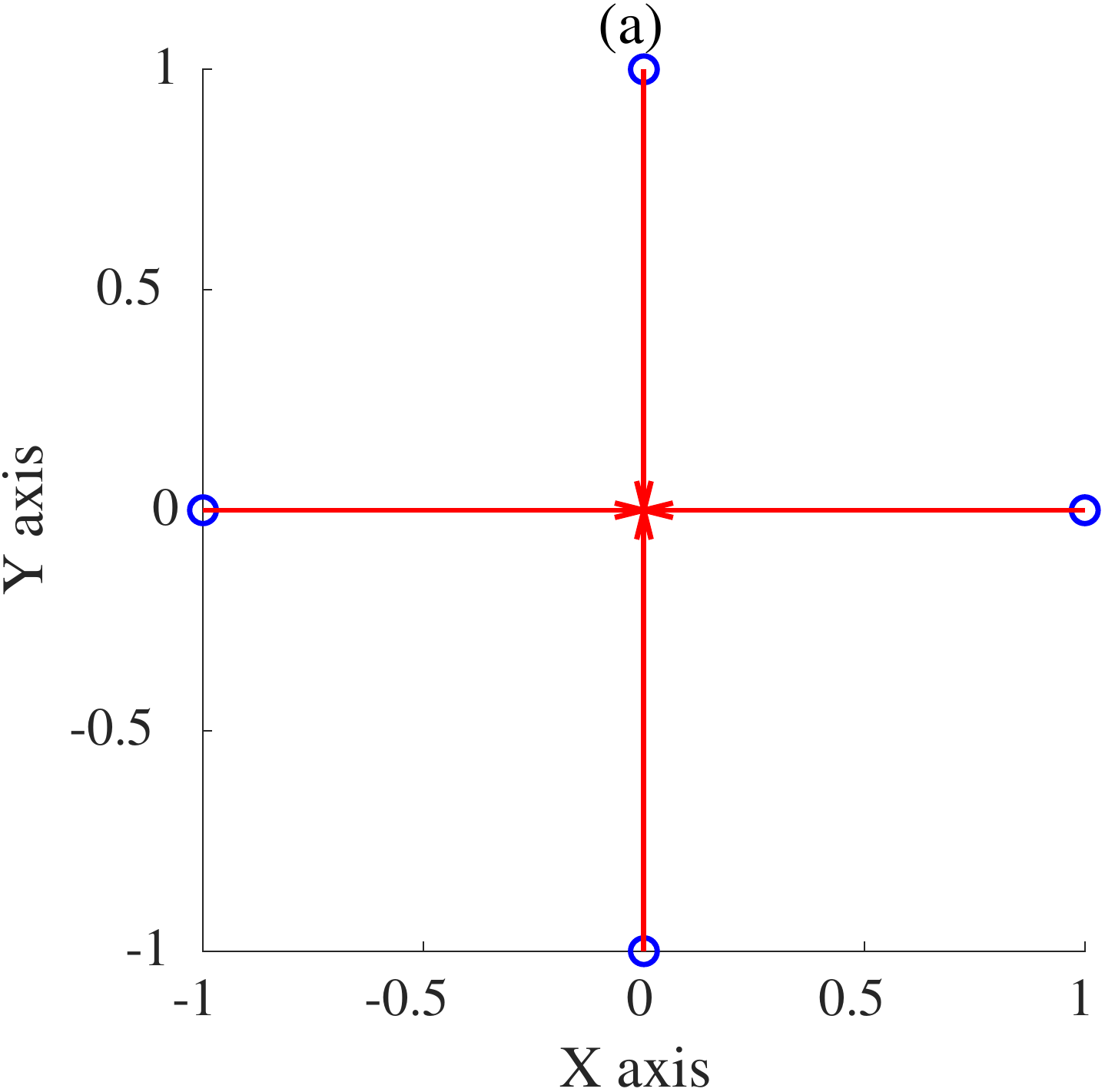}
	\includegraphics[scale=0.35]{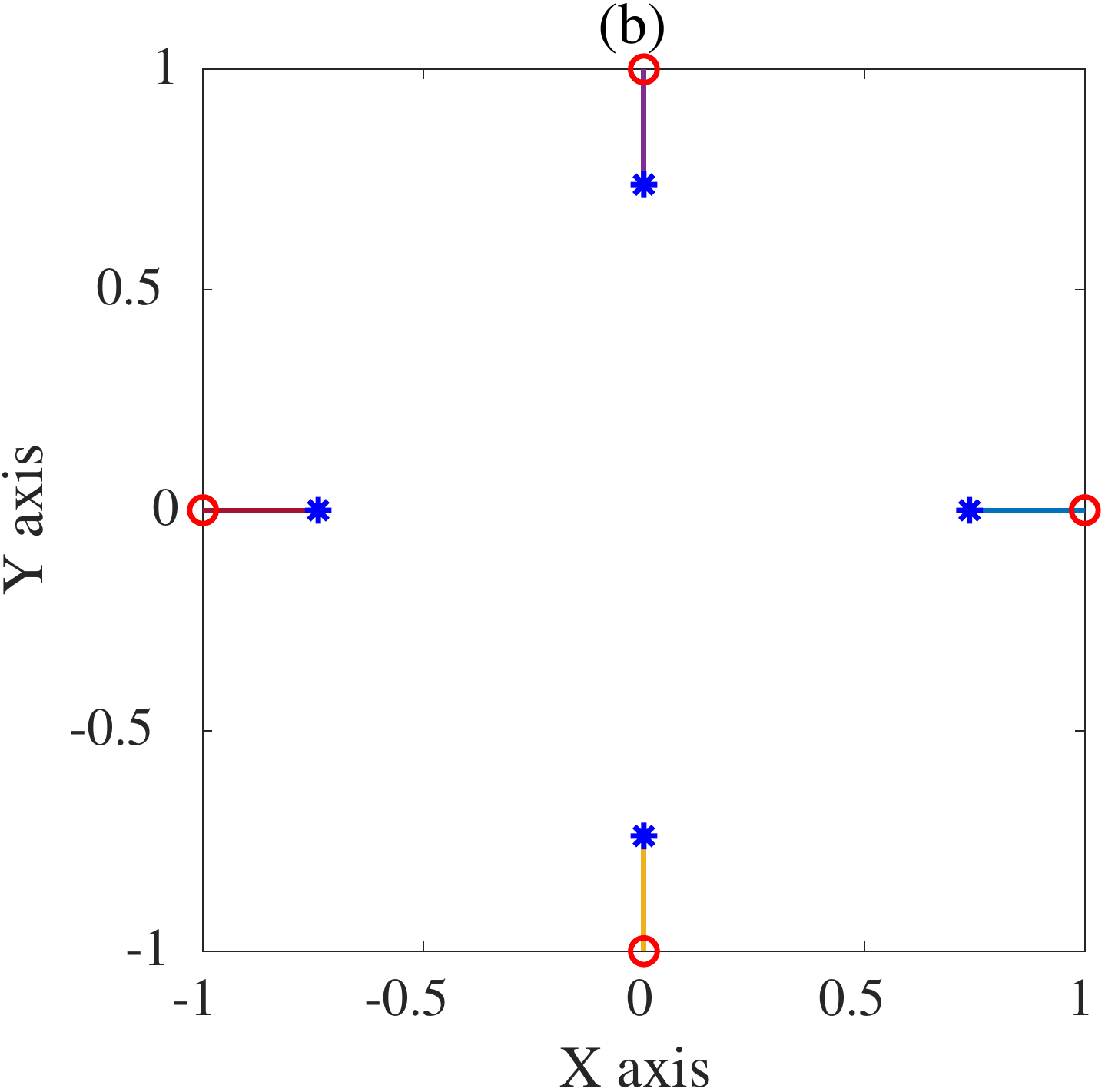}
	\includegraphics[scale=0.35]{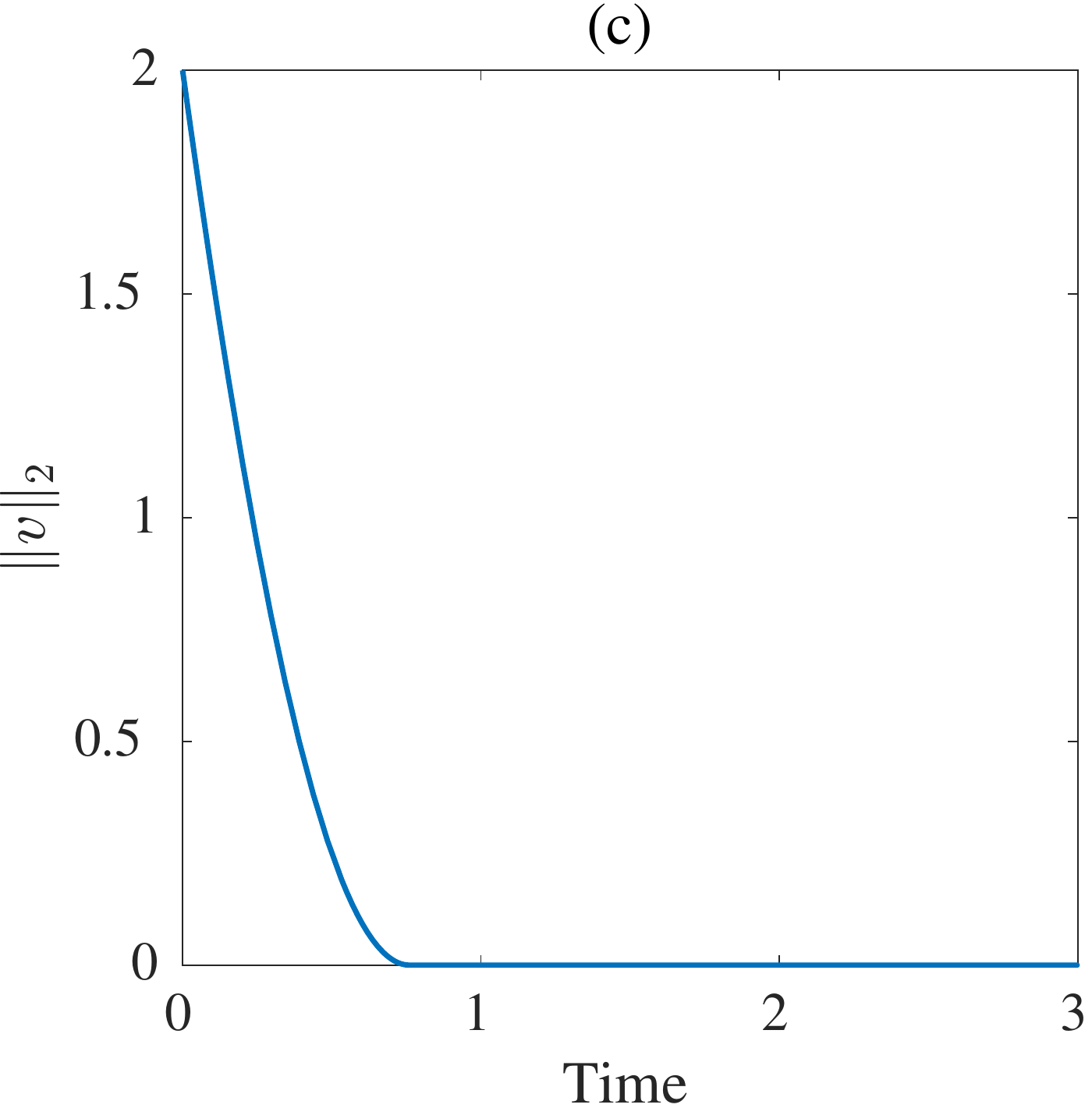}
	\caption{
	(a) The initial configuration.  
	(b) The trajectories of all agents.
	(c) The graph of $\norm{v}{2}$, where $\norm{v}{2}:= \left(\sum_{i\in\Ical} \norm{v_i}{2}^2\right)^{\frac{1}{2}}$.}
	\label{fig2}
\end{figure}	
	
	By contrast, if the velocity of the fourth agent changes from $(0, -1)$ to $(0, -0.1)$, then by Proposition \ref{[Prop] Positive Convergence}, the norm $\norm{v_i}{2}$ must converge to $2$ for all $i\in\Ical$, which can be seen in Figure \ref{fig3}.
	It should be noted that all agents are moving in the negative direction of the $y$-axis, even though the fourth agent's speed along the $y$-axis decreases from 1 to 0.1. 
	
	From this example, we can see that it is not easy to control the direction of movement of the agents with the initial distribution using system \eqref{Rayleigh friction (1)}-\eqref{Rayleigh friction (2)}.

\begin{figure}[thb!]
	\includegraphics[scale=0.35]{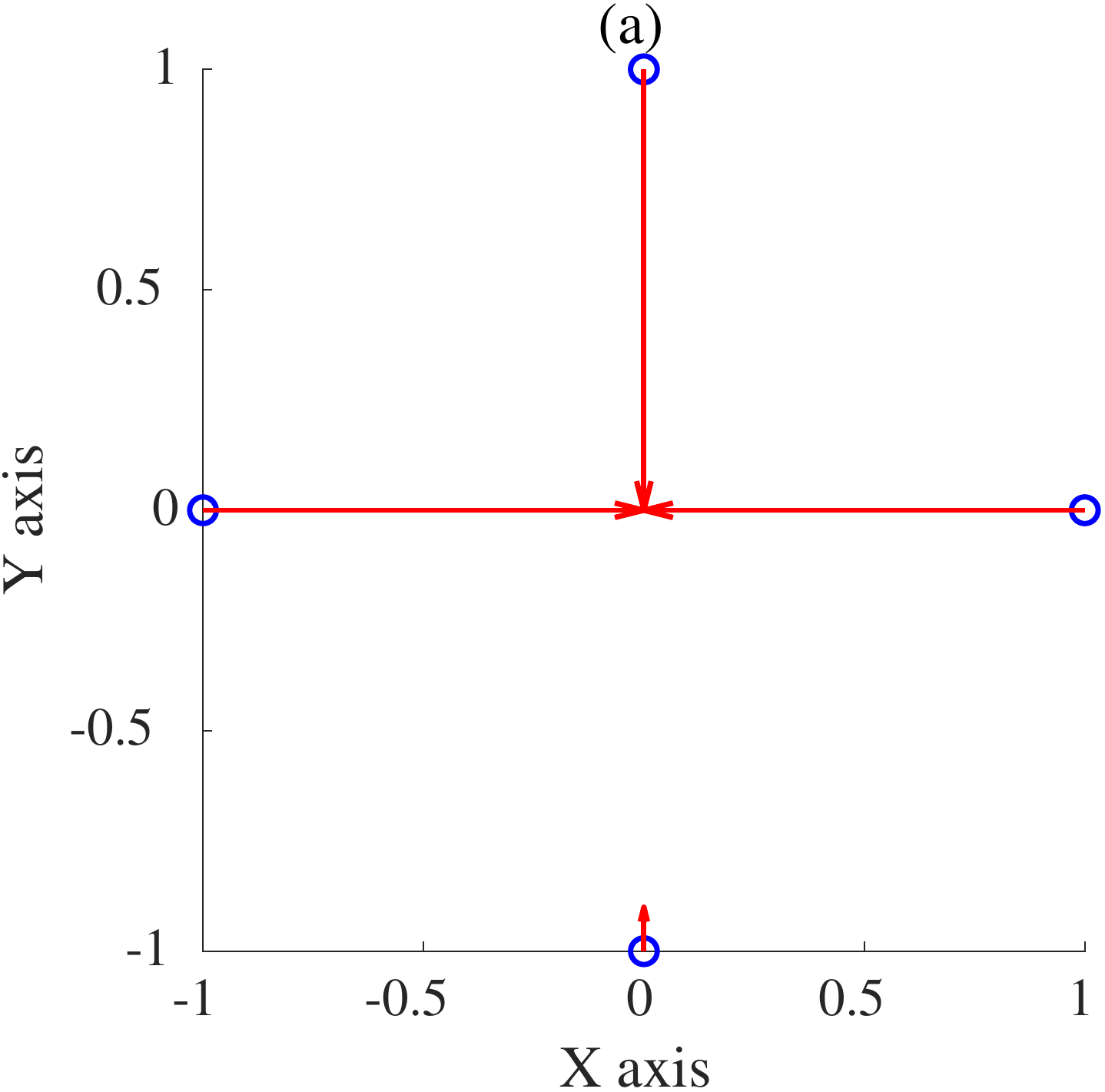}
	\includegraphics[scale=0.35]{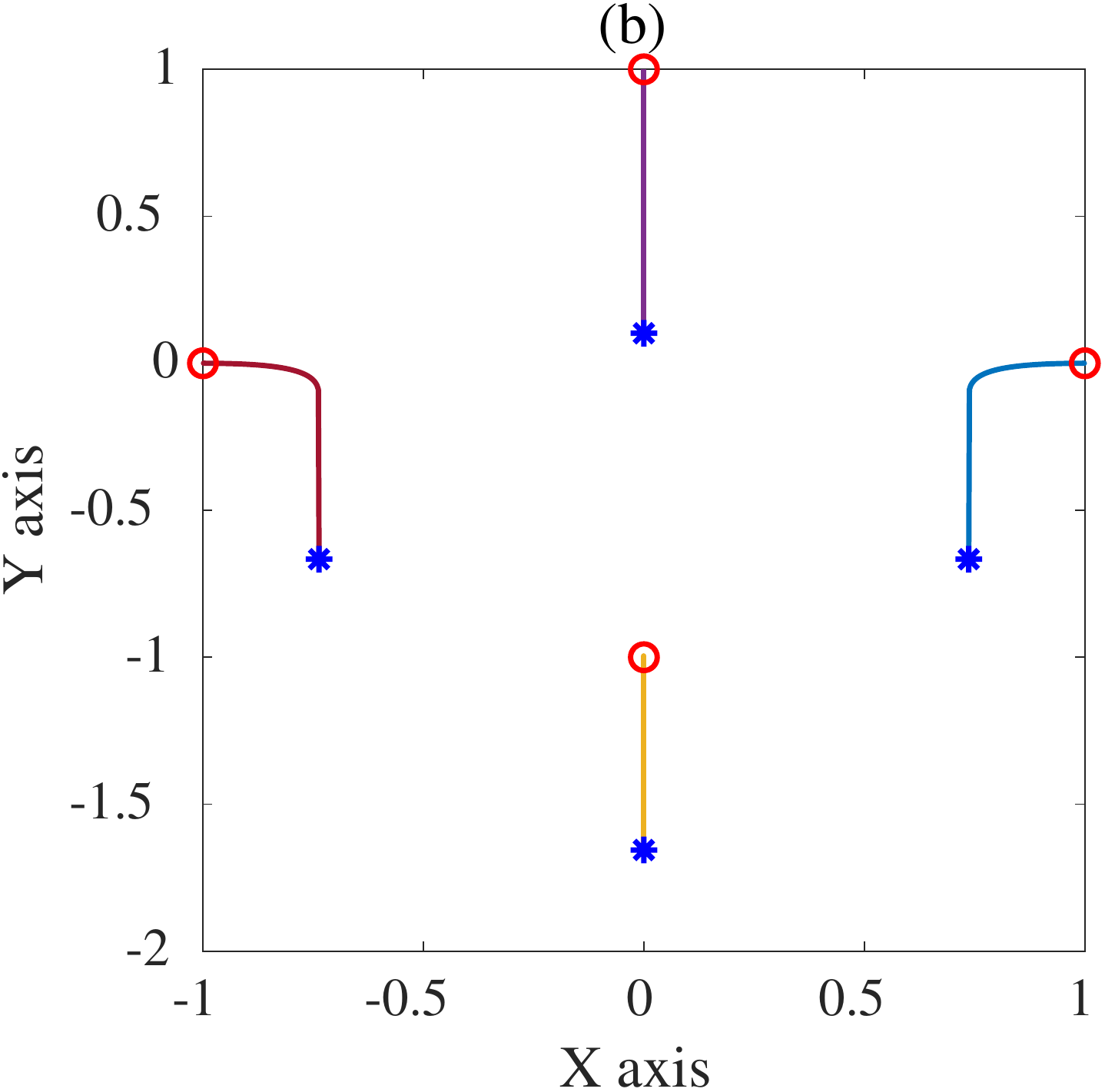}
	\includegraphics[scale=0.35]{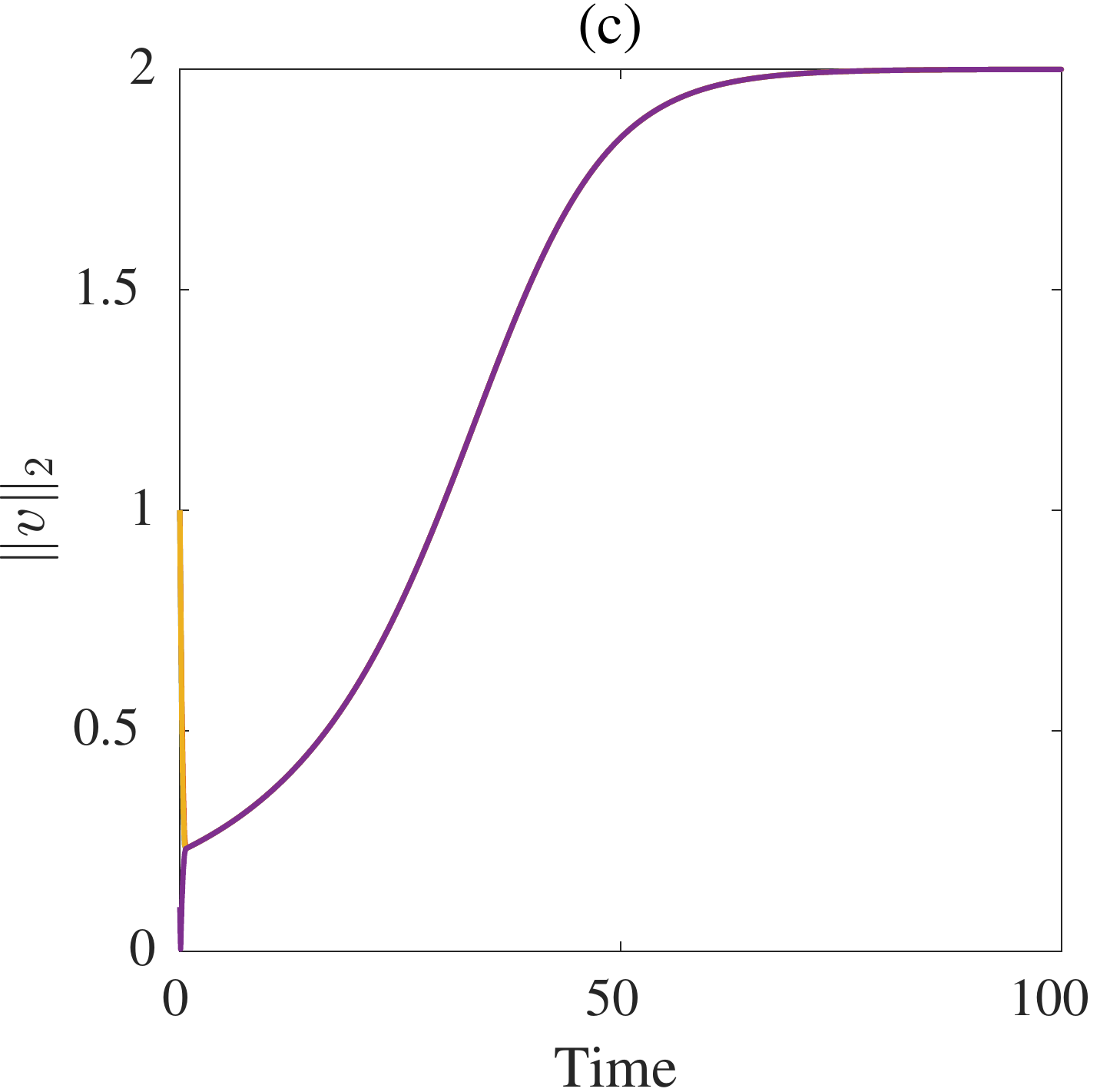}
	\caption{
	(a) The initial configuration.  
	(b) The trajectories of all agents.
	(c) The graph of $\norm{v_i}{2}$.}
	\label{fig3}
\end{figure}

\end{example}

\begin{example}
	In Section 4, we showed that system \MSys{} allows us to control the direction of each agent's velocity from Theorem \ref{Thm Finite Extinction Time} and Proposition \ref{[Prop] Convergence to Cabk}.
	Here, we simulate the results of Theorem \ref{Thm Finite Extinction Time} and Proposition \ref{[Prop] Convergence to Cabk}.
	
	We first consider 20 agents (that is $N=20$), and the same initial distribution as in Example \ref{Ex1}.
	We also provide $a_k$ and $b_k$ for $k=1,2$ as follows:
	\begin{align*}
  		a_1 = 0.1,\quad a_2 = 0.01, \quad b_1 = 0.05, \quad b_2 = 0.1.
	\end{align*}
	Then, we can see that $\lim_{t\to\infty} v_{i,1} (t) = \left( \frac{a_1}{b_1} \right)^{\frac{1}{r-q}}  = 2$ and $\lim_{t\to\infty} v_{i,1} (t) = \left( \frac{a_2}{b_2} \right)^{\frac{1}{r-q}}  = 0.1$ as in Figure \ref{fig4} (c) and (d).
	The finite flocking time is around $t=1$.

	\begin{figure}[thb!]
		\centering
		\includegraphics[scale=0.4]{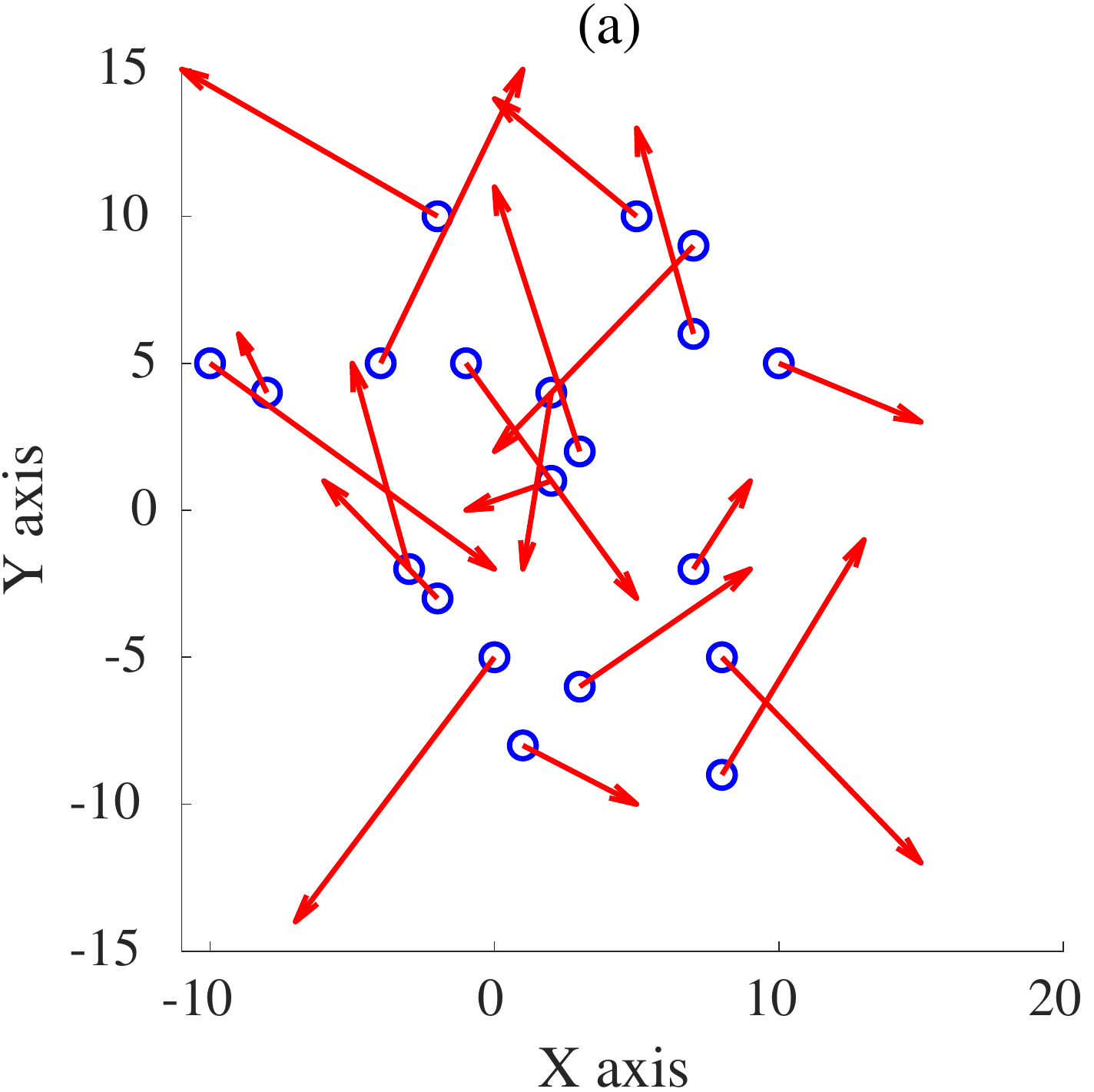}\quad
		\includegraphics[scale=0.4]{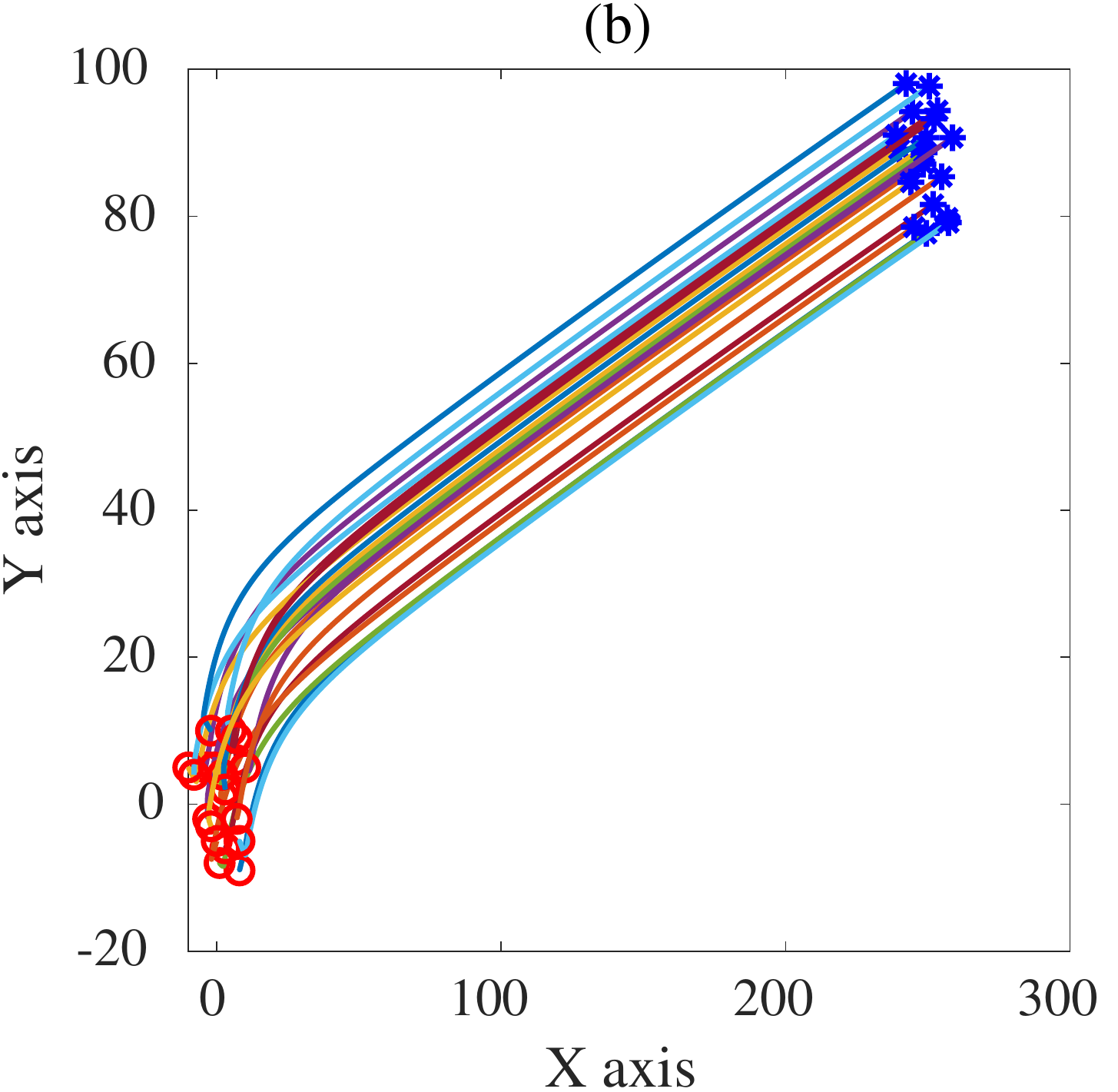}\\
		\includegraphics[scale=0.4]{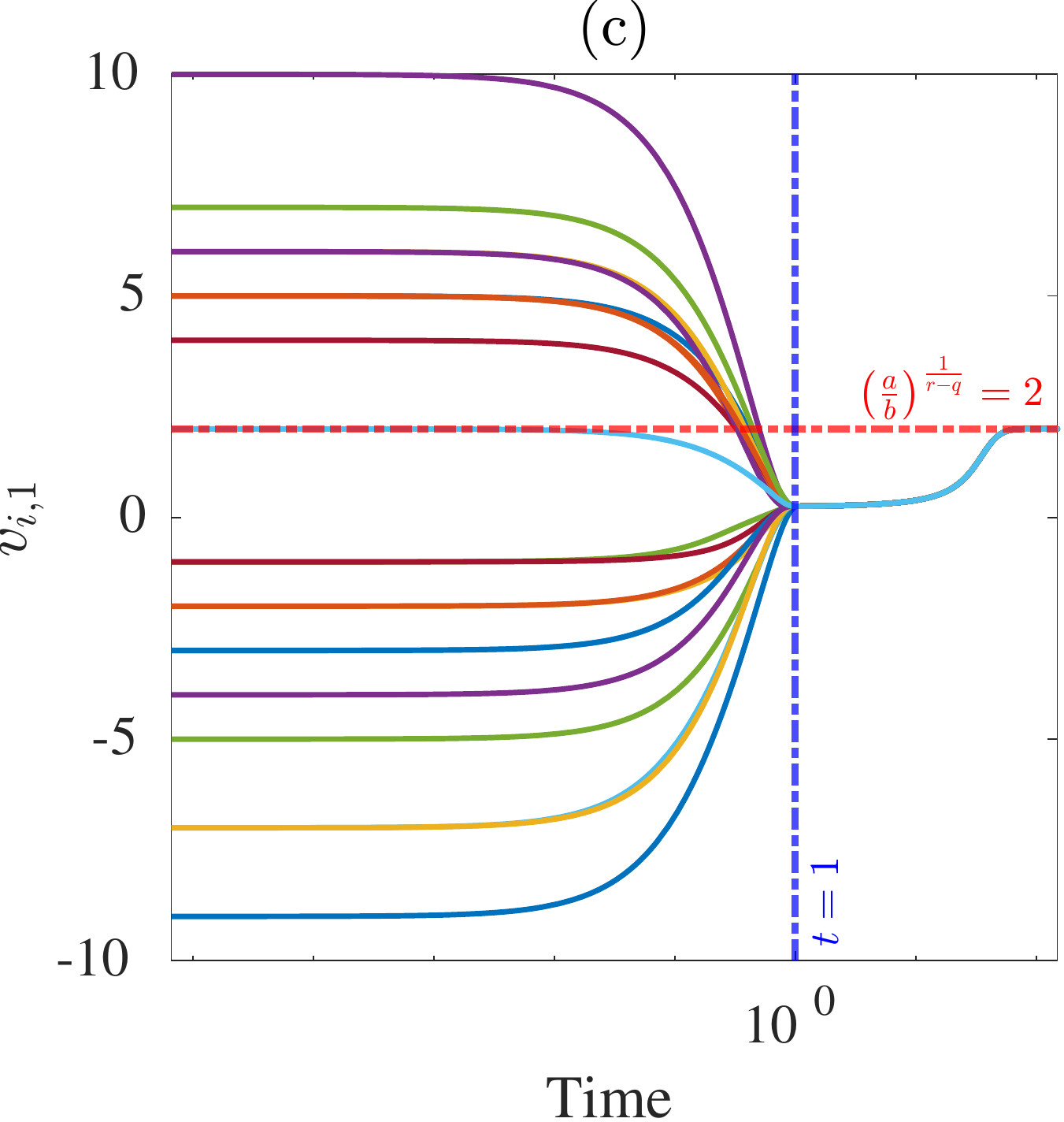}\quad
		\includegraphics[scale=0.4]{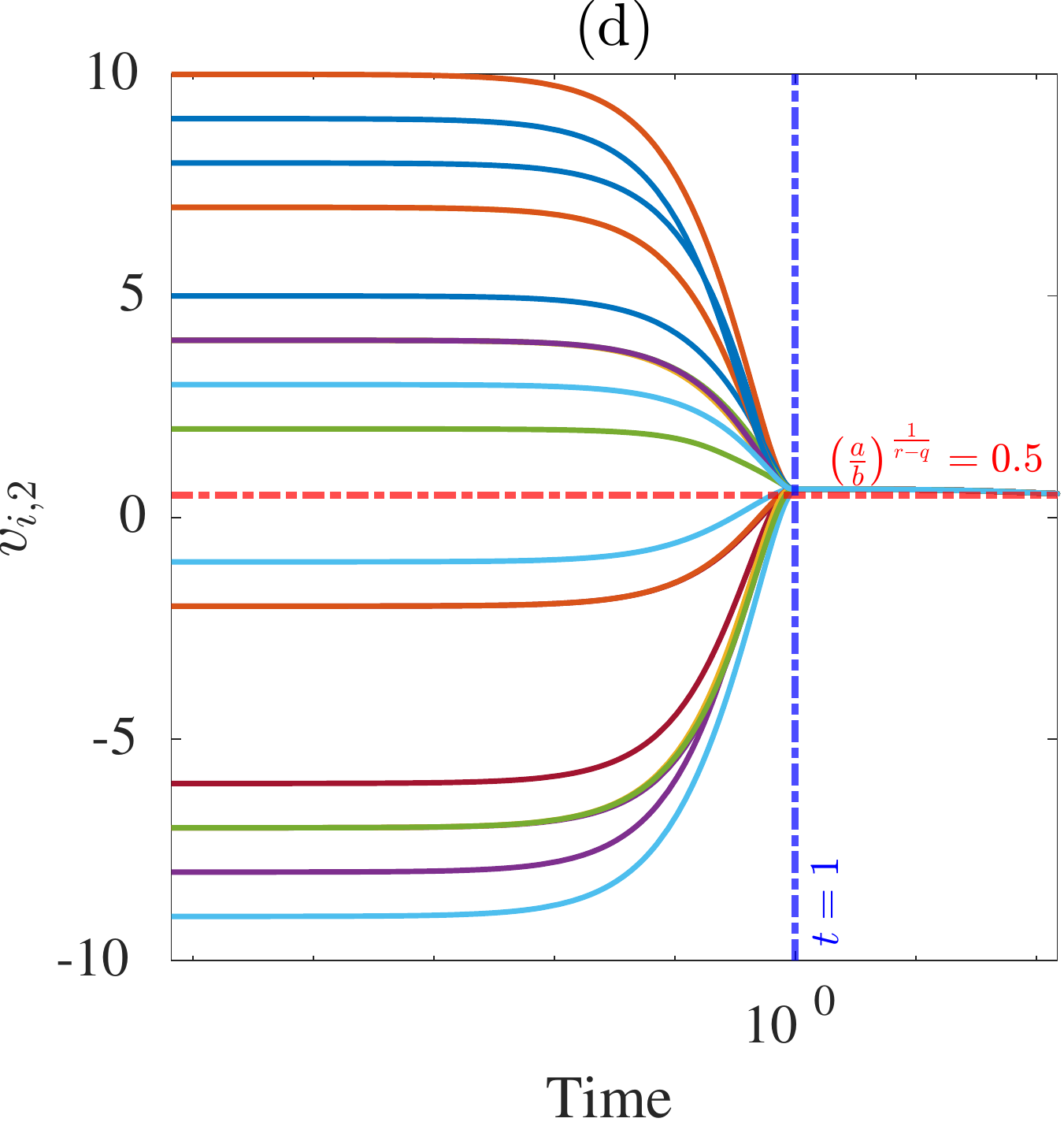}
		\caption{
		(a) The initial configuration.  
		(b) The trajectories of all agents.
		(c) The graph of $v_{i,1}$.
		(d) The graph of $v_{i,2}$.}
		\label{fig4}
	\end{figure}	
	
	Similar to the result of Proposition \ref{[Prop] Convergence to Cabk}, we now control the direction of the agents by adjusting the minimum value $v_{m,2}(0)$ of the initial velocity configuration for the second dimension (that is $k=2$).
	We first take $v_{i,2}(0) = \min \left\{ v_{i,2}, 0.5 \right\}$ for all $i\in\Ical$ as in Figure \ref{fig5} (b). 
	Figure \ref{fig5} (a) is a redrawing of the velocities of the agents indicated by the arrows in Figure \ref{fig4} (a).
	As shown in Figure \ref{fig6}, we can see $\lim_{t\to\infty} v_{i,2} (t) = - \left( \frac{a_1}{b_1} \right)^{\frac{1}{r-q}}  = -0.5$.

	\begin{figure}[thb!]
		\centering
		\includegraphics[scale=0.4]{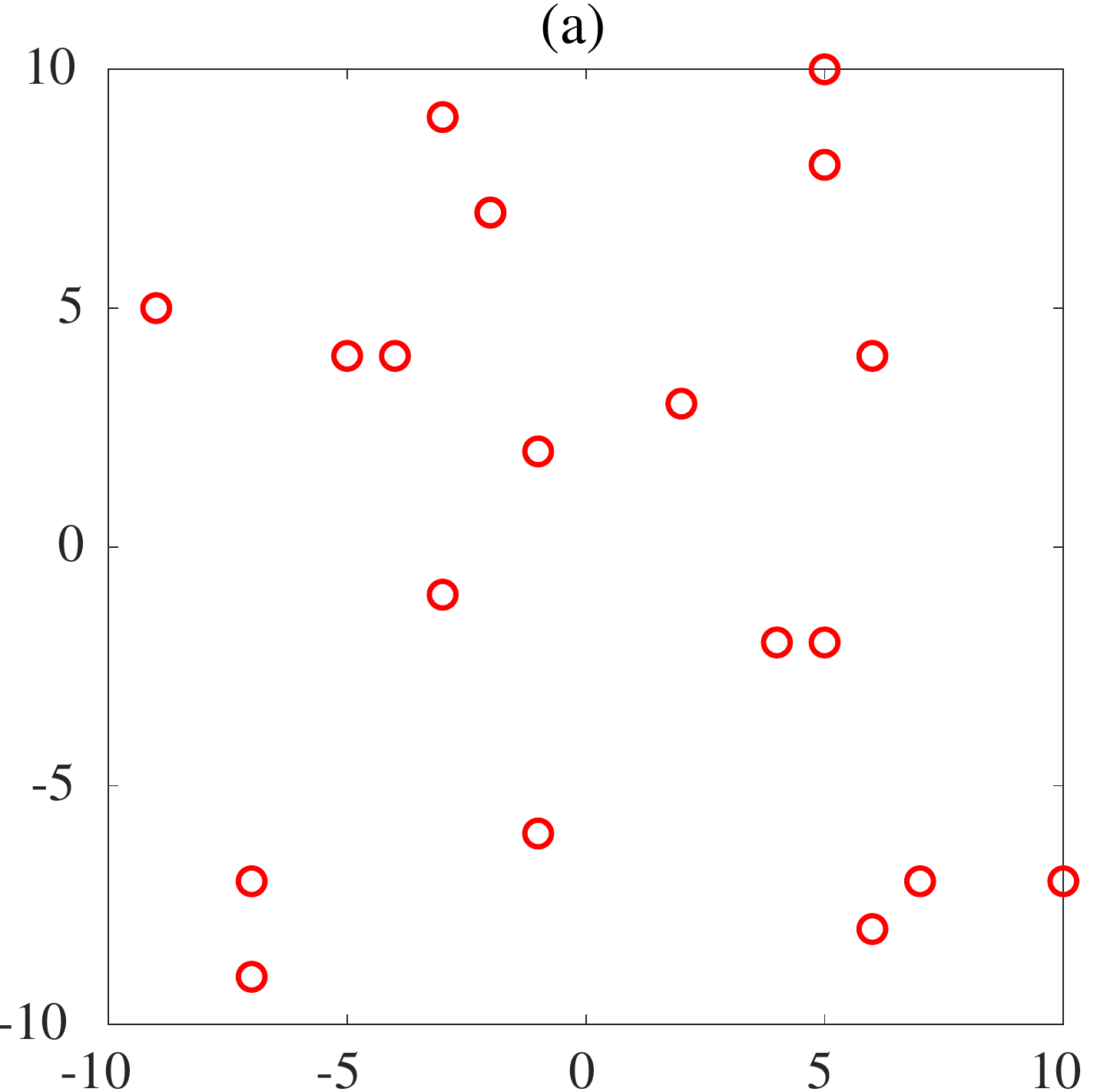}\quad
		\includegraphics[scale=0.4]{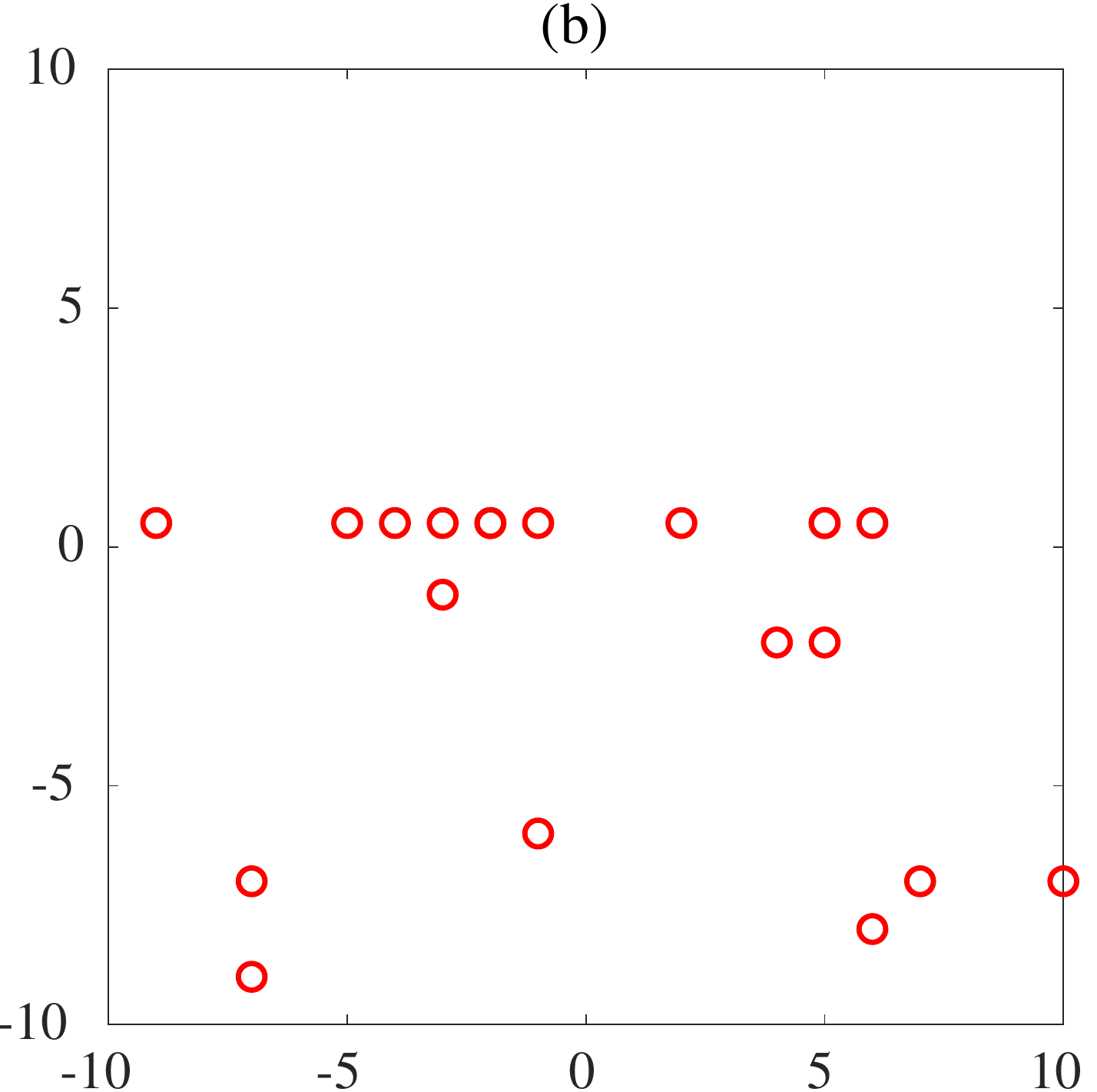}
		\caption{
		(a) The initial velocities of agents in Figure \ref{fig4}.
		(b) The initial velocities to control the direction of the agents}
		\label{fig5}
	\end{figure}

	\begin{figure}[thb!]
		\centering
		\includegraphics[scale=0.4]{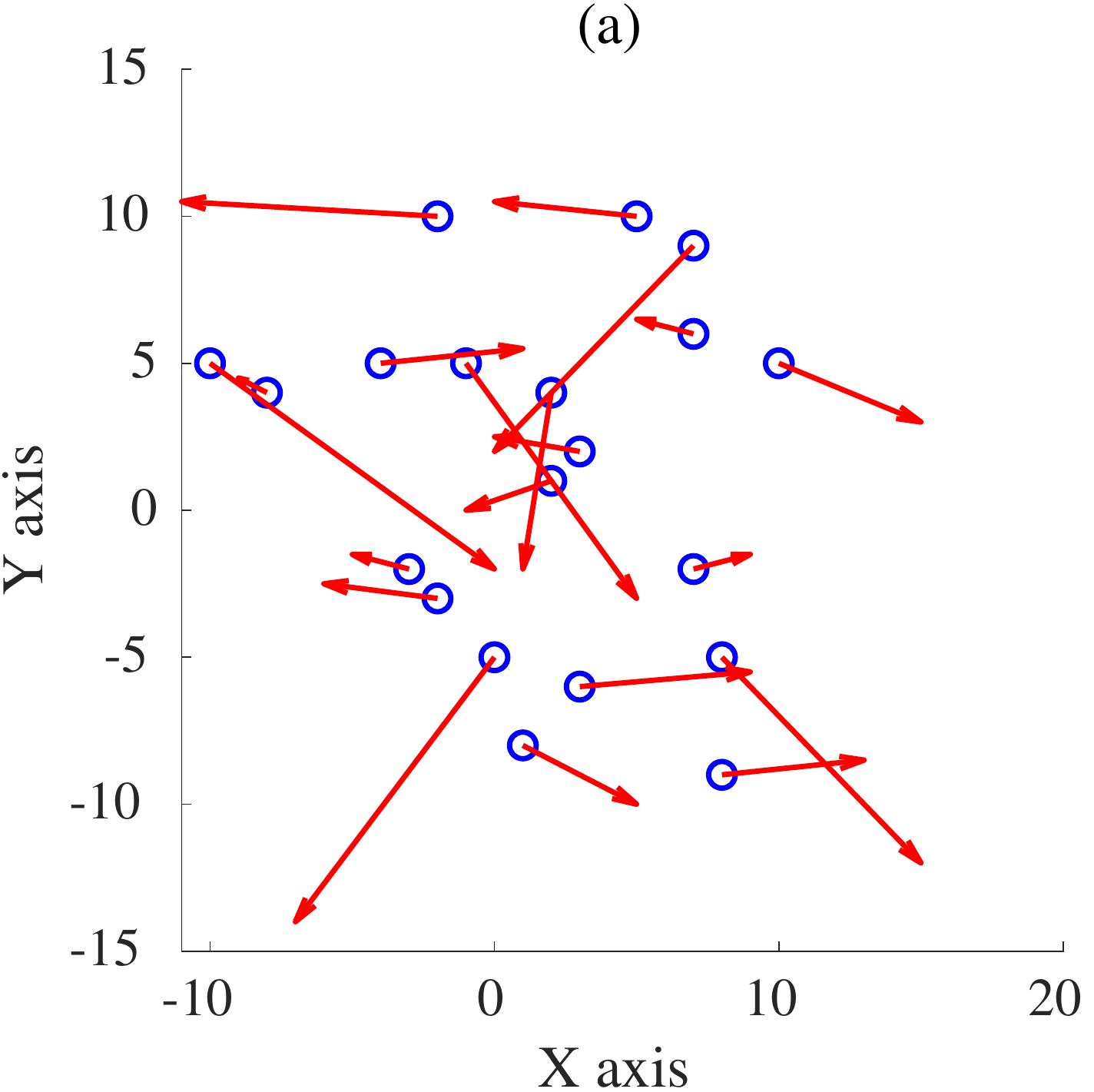}\quad
		\includegraphics[scale=0.4]{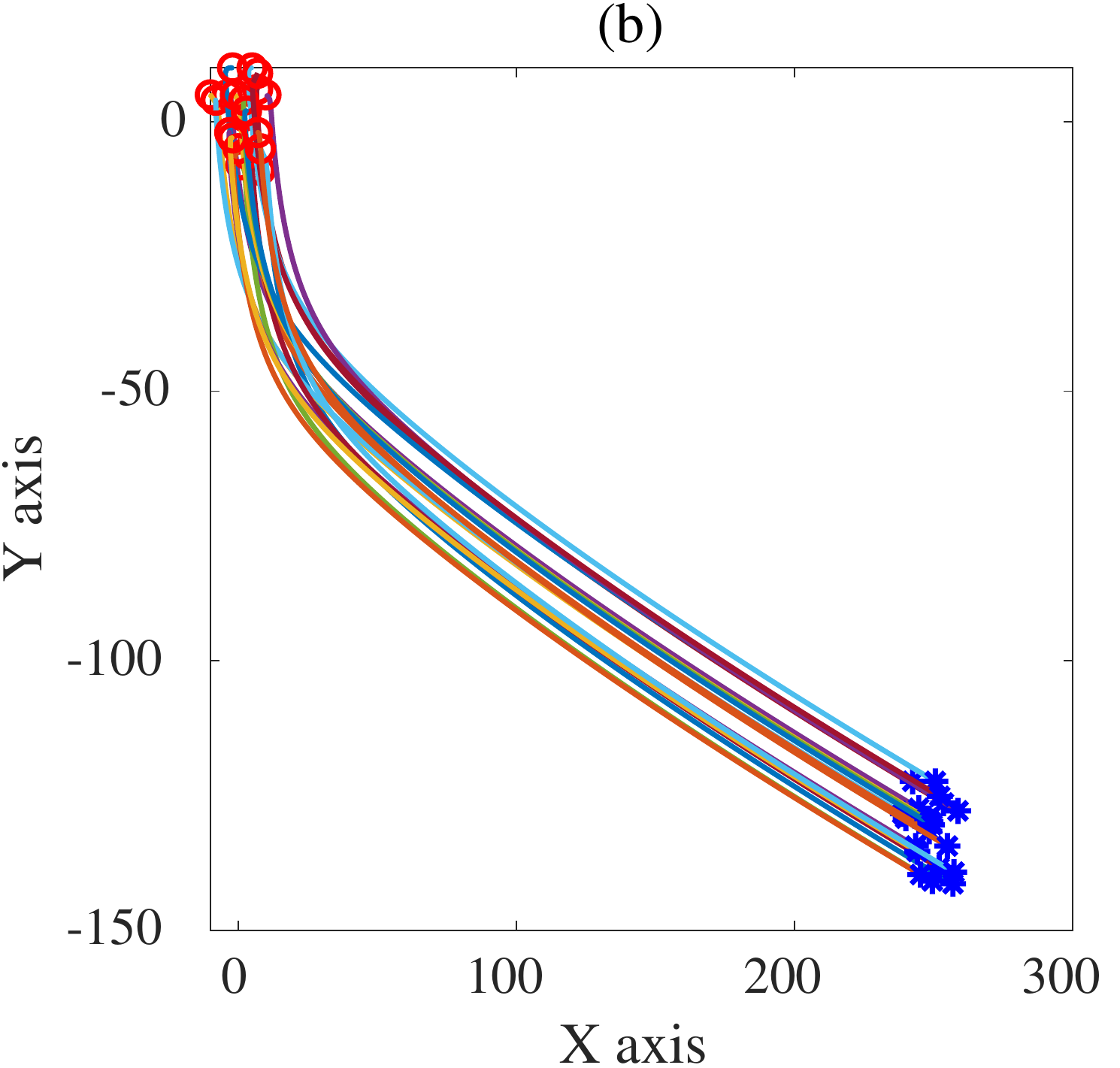}\\
		\includegraphics[scale=0.4]{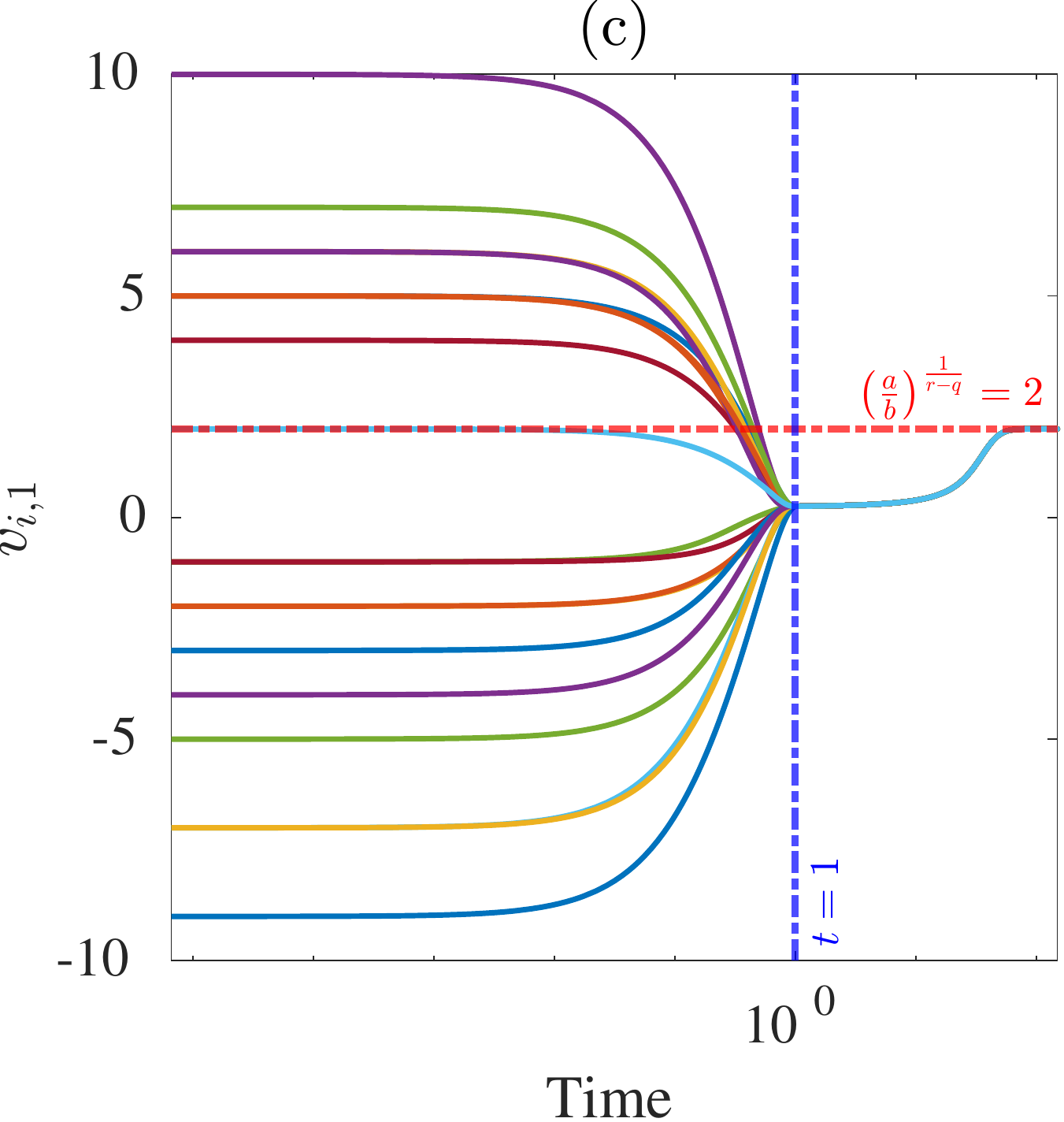}\quad
		\includegraphics[scale=0.4]{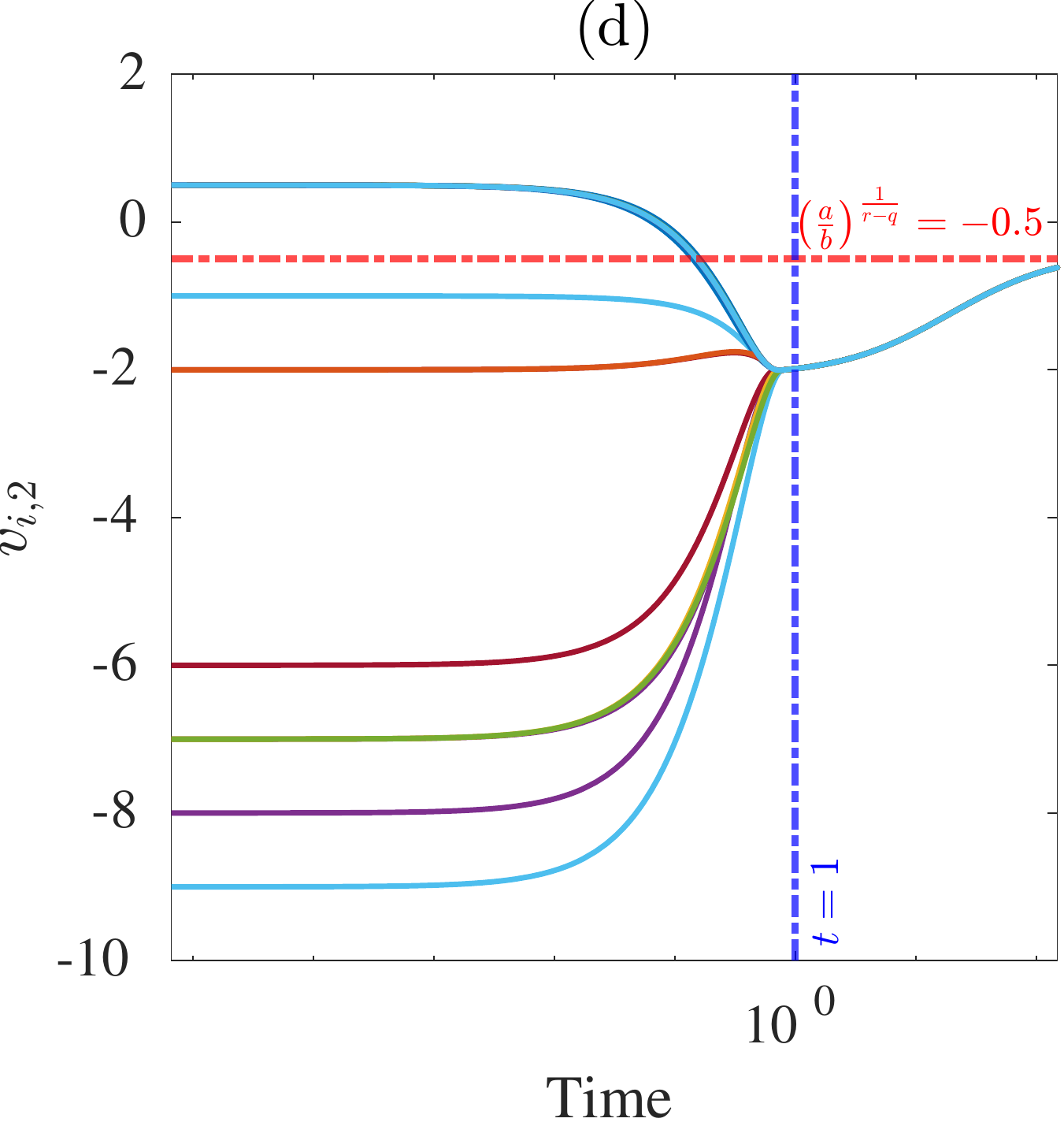}
		\caption{
		(a) The new initial distribution.
		(b) The trajectories of all agents.
		(c) The graph of $v_{i,1}$.
		(d) The graph of $v_{i,2}$.}
		\label{fig6}
	\end{figure}
\end{example}

\section{Conclusion}
In this study, we dicussed three open questions of the Cucker--Smale model with norm-type Rayleigh friction that were addressed in \cite{ha2010asymptotic}.
For these open questions, we proposed a more generalized model, the nonlinear Cucker--Smale model with norm-type Rayleigh friction, using the discrete $p$-Laplacian ($1<p<2$).
For this model, we presented some conditions to guarantee that the norm $\norm{v_i}{2}$ converges to 0 or $\Cab$ as $t\to\infty$. 
Moreover, we also presented conditions under which the norm $\norm{v_i}{2}$ converges to only $\Cab$ as $t\to\infty$. 
We also proposed the nonlinear Cucker--Smale model with vector-type Rayleigh friction to discuss the open question about the direction of the velocities of agents and presented conditions to control the direction of the agents’ velocities by parameters in the model.
Finally, we also showed that the regular communication weight $\psi_R$ satisfies the conditions given in this paper.
In particular, we presented in this paper conditions that the parameter $a$ and $|v_{m,k}(0)|$ must be very small. However, we leave the question of how small they should be as a topic for future research.

\section*{Acknowledgements}
The second author was supported in part by NSF-DMS 2208499 and REP grant for the year of 2022 from Texas State University.
The corresponding author was supported by a National Research Foundation of Korea(NRF) grant funded by the Korean government(MSIT) (No. 2021R1A2C1093929), and Kunsan National University in 2022.

\bibliographystyle{elsarticle-num}
\bibliography{NCS_RF.bib}

\begin{thebibliography}{10}

\bibitem{ahn2012collision}
S.~M. Ahn, H.~Choi, S.-Y. Ha, and H.~Lee.
\newblock On collision-avoiding initial configurations to cucker-smale type
  flocking models.
\newblock {\em Communications in Mathematical Sciences}, 10(2):625--643, 2012.

\bibitem{argun2021vicsek}
A.~Argun, A.~Callegari, and G.~Volpe.
\newblock The vicsek model.
\newblock In {\em Simulation of Complex Systems}. IOP Publishing, 2021.

\bibitem{bae2022global}
H.~Bae.
\newblock Global existence of unique solutions to equations for pattern
  formation in active mixtures.
\newblock {\em Chaos, Solitons \& Fractals}, 161:112337, 2022.

\bibitem{bailo2018optimal}
R.~Bailo, M.~Bongini, J.~A. Carrillo, and D.~Kalise.
\newblock Optimal consensus control of the cucker-smale model.
\newblock {\em IFAC-PapersOnLine}, 51(13):1--6, 2018.

\bibitem{barrett1994finite}
J.~W. Barrett and W.~Liu.
\newblock Finite element approximation of the parabolic p-laplacian.
\newblock {\em SIAM Journal on Numerical Analysis}, 31(2):413--428, 1994.

\bibitem{bongini2014sparse}
M.~Bongini, M.~Fornasier, F.~Fr{\"o}hlich, L.~Haghverdi, et~al.
\newblock Sparse stabilization of dynamical systems driven by attraction and
  avoidance forces.
\newblock {\em Networks Heterog. Media}, 9(1):1--31, 2014.

\bibitem{carrillo2017sharp}
J.~A. Carrillo, Y.-P. Choi, P.~B. Mucha, and J.~Peszek.
\newblock Sharp conditions to avoid collisions in singular cucker--smale
  interactions.
\newblock {\em Nonlinear Analysis: Real World Applications}, 37:317--328, 2017.

\bibitem{cho2016emergence_unit_speed}
J.~Cho, S.-Y. Ha, F.~Huang, C.~Jin, and D.~Ko.
\newblock Emergence of bi-cluster flocking for agent-based models with unit
  speed constraint.
\newblock {\em Analysis and Applications}, 14(01):39--73, 2016.

\bibitem{cho2016emergence}
J.~Cho, S.-Y. Ha, F.~Huang, C.~Jin, and D.~Ko.
\newblock Emergence of bi-cluster flocking for the cucker--smale model.
\newblock {\em Mathematical Models and Methods in Applied Sciences},
  26(06):1191--1218, 2016.

\bibitem{choi2019collisionless}
Y.-P. Choi, D.~Kalise, J.~Peszek, and A.~A. Peters.
\newblock A collisionless singular cucker--smale model with decentralized
  formation control.
\newblock {\em SIAM Journal on Applied Dynamical Systems}, 18(4):1954--1981,
  2019.

\bibitem{cucker2014conditional}
F.~Cucker and J.-G. Dong.
\newblock A conditional, collision-avoiding, model for swarming.
\newblock {\em Discrete and Continuous Dynamical Systems}, 34(3):1009--1020,
  2014.

\bibitem{cucker2007emergent}
F.~Cucker and S.~Smale.
\newblock Emergent behavior in flocks.
\newblock {\em IEEE Transactions on automatic control}, 52(5):852--862, 2007.

\bibitem{dalmao2011cucker}
F.~Dalmao and E.~Mordecki.
\newblock Cucker--smale flocking under hierarchical leadership and random
  interactions.
\newblock {\em SIAM Journal on Applied Mathematics}, 71(4):1307--1316, 2011.

\bibitem{dong2016flocking}
J.-G. Dong and L.~Qiu.
\newblock Flocking of the cucker-smale model on general digraphs.
\newblock {\em IEEE Transactions on Automatic Control}, 62(10):5234--5239,
  2016.

\bibitem{ha2010asymptotic}
S.-Y. Ha, T.~Ha, and J.-H. Kim.
\newblock Asymptotic dynamics for the cucker--smale-type model with the
  rayleigh friction.
\newblock {\em Journal of Physics A: Mathematical and Theoretical},
  43(31):315201, 2010.

\bibitem{ha2017critical}
S.-Y. Ha, D.~Ko, and Y.~Zhang.
\newblock Critical coupling strength of the cucker--smale model for flocking.
\newblock {\em Mathematical Models and Methods in Applied Sciences},
  27(06):1051--1087, 2017.

\bibitem{ha2009simple}
S.-Y. Ha, J.-G. Liu, et~al.
\newblock A simple proof of the cucker-smale flocking dynamics and mean-field
  limit.
\newblock {\em Communications in Mathematical Sciences}, 7(2):297--325, 2009.

\bibitem{kim2020complete}
J.-H. Kim and J.-H. Park.
\newblock Complete characterization of flocking versus nonflocking of
  cucker--smale model with nonlinear velocity couplings.
\newblock {\em Chaos, Solitons \& Fractals}, 134:109714, 2020.

\bibitem{kim2022clustering}
J.-H. Kim and J.-H. Park.
\newblock Clustering phenomenon of the singular cucker--smale model with finite
  communication weight and variable coupling strength.
\newblock {\em Chaos, Solitons \& Fractals}, 164:112573, 2022.

\bibitem{kim2023analysis}
J.-H. Kim and J.-H. Park.
\newblock Analysis of mono-and multi-cluster flocking for a nonlinear
  cucker--smale model with external force.
\newblock {\em Chaos, Solitons \& Fractals}, 168:113165, 2023.

\bibitem{li2022modeling}
Q.~Li, L.~Zhang, Y.~Jia, T.~Lu, and X.~Chen.
\newblock Modeling, analysis, and optimization of three-dimensional restricted
  visual field metric-free swarms.
\newblock {\em Chaos, Solitons \& Fractals}, 157:111879, 2022.

\bibitem{motsch2011new}
S.~Motsch and E.~Tadmor.
\newblock A new model for self-organized dynamics and its flocking behavior.
\newblock {\em Journal of Statistical Physics}, 144(5):923--947, 2011.

\bibitem{perea2009extension}
L.~Perea, G.~G{\'o}mez, and P.~Elosegui.
\newblock Extension of the cucker-smale control law to space flight formations.
\newblock {\em Journal of guidance, control, and dynamics}, 32(2):527--537,
  2009.

\bibitem{rodrigues2016kuramoto}
F.~A. Rodrigues, T.~K.~D. Peron, P.~Ji, and J.~Kurths.
\newblock The kuramoto model in complex networks.
\newblock {\em Physics Reports}, 610:1--98, 2016.

\bibitem{shen2008cucker}
J.~Shen.
\newblock Cucker--smale flocking under hierarchical leadership.
\newblock {\em SIAM Journal on Applied Mathematics}, 68(3):694--719, 2008.

\end{thebibliography}

\end{document}